\documentclass[a4paper,leqno,11pt]{amsart}
\usepackage{mathrsfs}
\usepackage{amsfonts}
\usepackage{amssymb}
\usepackage{amsxtra}
\usepackage{dsfont}
\usepackage{mathtools}
\usepackage{enumitem}
\usepackage{bm}
\usepackage{mathtools}
\usepackage{yhmath}

\usepackage{graphicx}
\usepackage[english]{babel}

\usepackage{color}

\usepackage[hidelinks]{hyperref}
\usepackage{nameref}
\hypersetup{colorlinks = true, urlcolor = blue, linkcolor = blue, citecolor = red}

\numberwithin{equation}{section}
\newtheorem{theorem}[equation]{Theorem}
\newtheorem{lemma}[equation]{Lemma}
\newtheorem{proposition}[equation]{Proposition}
\newtheorem{corollary}[equation]{Corollary}

\theoremstyle{definition}
\newtheorem{remark}[equation]{Remark} 

\newcommand{\RR}{\mathbb{R}}
\newcommand{\ZZ}{\mathbb{Z}}
\newcommand{\CC}{\mathbb{C}}
\newcommand{\NN}{\mathbb{N}}
\newcommand{\Z}{\mathbb{Z}}

\newcommand{\N}{\mathbb{N}}
\newcommand{\T}{\mathbb{T}}
\newcommand{\R}{\mathbb{R}}

\newcommand{\fa}{r}

\newcommand{\ind}[1]{{\mathds{1}_{{#1}}}}

\DeclareMathOperator{\Ima}{Im}
\DeclareMathOperator{\Rea}{Re}
\DeclareMathOperator{\sgn}{sgn}
\DeclareMathOperator{\Vol}{Vol}

\renewcommand{\baselinestretch}{1.2}

\begin{document}

\title[Lattice points on high-dimensional cross-polytopes]{Uniform estimates for Delannoy numbers \\ and dimension-free estimates for discrete maximal functions over cross-polytopes}

\begin{abstract}
We prove a uniform upper and lower bound for Delannoy numbers. This is achieved by using the representation of Delannoy numbers as the number of lattice points in high-dimensional cross-polytopes (also known as hyper-octahedrons or $\ell^1$~balls) and proving a uniform (dimension-free) count for these lattice points. Using this count, we establish dimension-free estimates for discrete maximal functions over cross-polytopes. By proving a comparison principle with the continuous setting, we obtain a dimension-free estimate on all $\ell^p(\Z^d)$ spaces for radii $R>C d^{3/2}.$
We also treat the full maximal function on $\ell^2(\Z^d)$ for small radii $R\le d^{1-\varepsilon}$ and the dyadic maximal function for any radii.  

\end{abstract}

\subjclass[2020]{05A16, 11B75, 42B25\vspace{0.25em}}
 \keywords{Delannoy number, discrete cross-polytopes, dimension-free estimates, discrete maximal function\vspace{0.25em}}

\thanks{This research was funded in whole or in part by National Science Centre, Poland, grant Sonata Bis 2022/46/E/ST1/00036. Jakub Niksiński was supported by the NSF CAREER grant DMS-2236493.
For the purpose of Open Access the authors have applied a CC BY public copyright license to any Author Accepted Manuscript (AAM) version arising from this submission.}

\author{Dariusz Kosz}
\address{\scriptsize Dariusz Kosz (\textnormal{dariusz.kosz@pwr.edu.pl}) \newline
	Faculty of Pure and Applied Mathematics, Wroc{\l}aw University of Science and Technology, 50-370 Wroc{\l}aw, Poland
    \vspace{-0.25em} 
}

\author{Jakub Niksi\'nski}
\address{\scriptsize Jakub Niksiński (\textnormal{trolek1130@gmail.com})
\newline  Department of Mathematics, Rutgers University, Piscataway, NJ 08854, USA
     \vspace{-0.25em}
}

\author{B{\l}a{\.z}ej Wr{\'o}bel}
\address{\scriptsize B{\l}a{\.z}ej Wr{\'o}bel (\textnormal{blazej.wrobel@math.uni.wroc.pl}) \newline
	Institute of Mathematics of the Polish Academy of Sciences, 00-656 Warsaw, Poland \newline 
	Institute of Mathematics, University of Wroc{\l}aw, 50-384 Wroc{\l}aw, Poland
     \vspace{-0.25em}
}

\maketitle		
 
\section{Introduction} \label{S1}

\subsection{Statement of the results}
\label{S1 stat}

 A Delannoy number $D(d,n)$, where $d, n$ are nonnegative integers, counts the paths from the southwest corner $(0, 0)$ of a rectangular grid to the northeast corner $(d, n)$, using only single steps north, northeast, or east. Clearly, $D(d,n) = 1$ if $\min \{d,n\} = 0$. Otherwise, for $d,n \in \NN$, there are several explicit formulas for these numbers in terms of sums involving binomial coefficients. For instance, using
\begin{equation}
\label{eq: Delsum}
 D(d,n)=\sum_{k=0}^{\min(d,n)} 2^k \binom{d}{k} \binom{n}{k},
 \end{equation}
one can easily prove that
\begin{equation}
\label{eq: Del l1ball}
 D(d,n)=|B_n^d\cap\Z^d|;
\end{equation}
see, e.g. \cite[Lemma~2.2]{Ni1}. The symbol $|B_n^d\cap\Z^d|$ above denotes the number of lattice points in the closed $d$-dimensional $\ell^1$~ball of radius $n$ (also known as cross-polytope, hyper-octahedron, or orthoplex), that is,
\[
B^d_n \coloneqq \big \{ x = (x_1, \dots, x_d) \in \RR^d : |x_1| + \dots + |x_d| \leq n \big \}. 
\]
We remark that \eqref{eq: Delsum} implies that 
\begin{equation}
\label{eq: B change n and d}
|B_n^d\cap\Z^d|=D(d,n)=D(n,d)=|B_d^n\cap\Z^n|,
\end{equation}
which -- apart from being quite puzzling -- is important for our method, as it allows swapping $n$ and $d$ in the lattice point count.

Looking at \eqref{eq: Delsum}, it is clear that for $n$ much larger than $d$ the dominant term is $2^d \binom{n}{d}$, and it is of the order of the Lebesgue measure $\Vol(B_n^d).$ On the other hand, if $n$ is much smaller than $d$, then the dominant term in \eqref{eq: Delsum} is $2^n \binom{d}{n}$, and it is equal to the number of lattice points in $S^d_n\cap \{-1,0,1\}^d,$ where
 $S_n^d$ denotes the boundary of the cross-polytope $B_n^d$, that is,
\[
S^d_n \coloneqq \big \{ x = (x_1, \dots, x_d) \in \RR^d : |x_1| + \dots + |x_d| =n \big \}. 
\]
However, it is not clear what ``much larger'' or ``much smaller'' means exactly, and how and when the transition between these two behaviors occurs. This problem is studied in our paper, especially in the context of proving dimension-free estimates for discrete maximal functions over $B_n^d.$

The first purpose of this paper is to give a uniform estimate from above and below for $D(d,n)$. This will be achieved by using \eqref{eq: Del l1ball} and \eqref{eq: B change n and d}, and by establishing such a uniform lattice point count for $B_n^d\cap\Z^d$ and $S_n^d\cap\Z^d$ when $n\le d.$  In what follows, for two quantities $X$ and $Y$ we write $X\lesssim Y$ when $X \le C Y,$ where $C$ is a universal (absolute) constant. We also write $X\approx  Y$ when $X\lesssim Y$ and $Y\lesssim X$ hold simultaneously. Our main lattice point count below is given in terms of the two quantities
\[
\alpha \coloneqq \frac{n}{d}, \quad r \coloneqq \frac{\sqrt{1+\alpha^2}-1}{\alpha}.
\]
Notice that for $n\le d$ we have $r\le 1/2$ and $r\approx \alpha.$

\begin{theorem}
\label{thm:lat:extract} 
For $d,n \in \N$ satisfying  $n \leq d$ we have 
\begin{equation}
\label{eq: B_n small univ} 
 D(d,n)=|B_n^d\cap\Z^d| \approx  |S_n^d\cap\Z^d|
 \approx  \bigg(\frac{1+r}{1-r}\bigg)^{d} r^{-n} \frac{1}{\sqrt{n}}.
\end{equation}
In particular, rewriting \eqref{eq: B_n small univ} in terms of $d$ and $n$, we obtain 
\[
D(d,n) \approx \bigg(\frac{d}{\sqrt{d^2+n^2}-n}\bigg)^{d} \bigg( \frac{n}{\sqrt{d^2+n^2}-d} \bigg)^n \frac{1}{\sqrt{n}}.
\]
Furthermore, there exists a function 
\[b(z)=\frac{z^2}{12} - \frac{3z^4}{160} + \sum_{k=3}^{\infty} b_k z^{2k},\]
which is holomorphic inside the disk $\{ z \in \CC : |z|<2/3 \}$ and for which
\begin{align}
\label{eq: B_n small}
|B_n^d\cap\Z^d| &\approx |S_n^d \cap \Z^d| \approx  (2e/\alpha)^n \frac{1}{\sqrt{n}
}\exp ( nb(\alpha) ),\qquad n\le d/2,
\\
\label{eq: B_n large}
|B_n^d\cap\Z^d| &\approx (2e \alpha)^d \frac{1}{\sqrt{d}
}\exp ( d b(\alpha^{-1}) ),\qquad n\ge 2d.
\end{align}

\end{theorem}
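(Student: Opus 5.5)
The plan is to treat $S_n^d\cap\Z^d$ through its generating function and a saddle point analysis that is uniform in $\alpha\in(0,1]$. Since each coordinate contributes the factor $1+2\sum_{k\ge1}z^k=\frac{1+z}{1-z}$, we have
\[
|S_n^d\cap\Z^d| = [z^n]\Big(\frac{1+z}{1-z}\Big)^d = \frac{1}{2\pi}\int_{-\pi}^{\pi}\Big(\frac{1+\rho e^{i\theta}}{1-\rho e^{i\theta}}\Big)^d \rho^{-n}e^{-in\theta}\,d\theta
\]
for every $\rho\in(0,1)$. Minimizing $d\log\frac{1+\rho}{1-\rho}-n\log\rho$ gives $\frac{2d}{1-\rho^2}=\frac{n}{\rho}$, that is $\alpha\rho^2+2\rho-\alpha=0$, whose root is exactly $\rho=r$. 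This explains the main factor $\big(\frac{1+r}{1-r}\big)^d r^{-n}$. It remains to show that the $\theta$-integral, normalized by this main factor, is $\approx n^{-1/2}$.

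\textbf{Probabilistic reformulation.} Equivalently, $|S_n^d\cap\Z^d|=\big(\frac{1+r}{1-r}\big)^d r^{-n}\,\mathbb P(X_1+\dots+X_d=n)$, where the $X_i$ are i.i.d.\ on $\NN\cup\{0\}$ with $\mathbb P(X=k)\propto c_k r^k$, $c_0=1$, $c_k=2$. The choice of $r$ makes $\mathbb E X=\alpha$, and a direct computation gives $\operatorname{Var}X\approx\alpha$ uniformly for $\alpha\le1$. So $d\operatorname{Var}X\approx n$, and we need a uniform local limit estimate $\mathbb P(\sum X_i=n)\approx n^{-1/2}$.

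\textbf{Upper bound.} I would use the characteristic function $\varphi(\theta)=\mathbb E e^{i\theta X}$. The key inequality is $|\varphi(\theta)|^d\le \exp(-c\,n\,\theta^2)$ for $|\theta|\le\pi$. This should follow from an explicit computation of $\big|\frac{1+re^{i\theta}}{1-re^{i\theta}}\big|^2\big/\big(\frac{1+r}{1-r}\big)^2$, which equals $1-c'\,r\sin^2(\theta/2)+\dots$, together with $dr\approx n$. Integrating then gives the upper bound $\lesssim n^{-1/2}$.

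\textbf{Lower bound.} Split the integral at $|\theta|=\delta/\sqrt n$.
\begin{itemize}
\item On the inner range, Taylor expand $\log\varphi$ to second order. The cubic term is controlled by $d\,\mathbb E X^3|\theta|^3\lesssim n|\theta|^3$, uniformly because $\mathbb E X^3\approx\alpha$. Hence the real part of the integrand stays $\gtrsim1$ and this range contributes $\gtrsim\delta n^{-1/2}$.
\item The outer range is bounded by $\int_{|\theta|>\delta/\sqrt n}e^{-cn\theta^2}d\theta$, which gives no decisive gain.
\end{itemize}
To close the lower bound I would instead take the inner range as $|\theta|\le K/\sqrt n$ with $K$ large, and keep the Gaussian main term $e^{-n\sigma^2\theta^2/2}$ explicit. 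The tail beyond $K/\sqrt n$ is then $\lesssim e^{-cK^2}n^{-1/2}$, and the Taylor error is $\lesssim K^4 n^{-1}$. This works once $n\ge n_0(K)$. For the finitely many $n<n_0$, the claim follows directly from \eqref{eq: Delsum}: the terms $2^k\binom dk\binom nk$ with $k\le n$ bounded give $D(d,n)\approx(2d/n)^n/n!$, which matches the formula since $r\approx\alpha/2$ there.

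\textbf{From the sphere to the ball.} Next, $|B_n^d\cap\Z^d|=\sum_{m\le n}|S_m^d\cap\Z^d|$. Comparing the two-sided estimates for consecutive $m$, the ratio $|S_{m}^d\cap\Z^d|/|S_{m-1}^d\cap\Z^d|$ is $\gtrsim 1/r(m/d)\ge 2$, up to constants; a cleaner route is to use the saddle at the same $r=r(n/d)$ for all $m\le n$. The sum is therefore dominated by a geometric series, and $|B_n^d\cap\Z^d|\approx|S_n^d\cap\Z^d|$.

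\textbf{The explicit formulas.} Rewriting $\frac{1+r}{1-r}$ and $r^{-1}$ in terms of $d,n$ is algebra: $\frac{1+r}{1-r}=\frac{d}{\sqrt{d^2+n^2}-n}$ and $r^{-1}=\frac{n}{\sqrt{d^2+n^2}-d}$. For \eqref{eq: B_n small}, set $F(\alpha)=\alpha^{-1}\log\frac{1+r}{1-r}-\log r-\log(2e/\alpha)$. This function is analytic and even in $\alpha$, with singularities at $\alpha=\pm i$ and where $1\pm r$ vanishes. Expanding $r=\alpha/2-\alpha^3/8+\dots$ yields $\alpha^2/12-3\alpha^4/160+\dots$, and one checks holomorphy in $|z|<2/3$. \eqref{eq: B_n large} follows from \eqref{eq: B_n small} applied with $n$ and $d$ swapped, via \eqref{eq: B change n and d}.

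The main obstacle is the uniformity of the local limit estimate across the whole range $\alpha\in(0,1]$, especially near the Poisson-like regime $n\ll d$. This requires the Gaussian decay bound $|\varphi|^d\le e^{-cn\theta^2}$ and the moment bounds to hold with constants independent of $\alpha$.
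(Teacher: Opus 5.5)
Your proposal is correct and is essentially the paper's argument: the tilted-distribution/characteristic-function formulation is exactly the Cauchy integral over $|z|=r$ normalized by $\big(\frac{1+r}{1-r}\big)^d r^{-n}$. It uses the same saddle $r$, the same Gaussian-plus-cubic-error analysis near $\theta=0$ for the lower bound, a separate treatment of bounded $n$, and the same expansion of $b$ combined with the swap $D(d,n)=D(n,d)$. The differences are cosmetic, with one clarification: your exact identity giving $|\varphi(\theta)|^2\le 1-4r\sin^2(\theta/2)/(1+r)^2$ yields a global Gaussian bound, where the paper splits at $|\theta|=\delta$; for the ball, rely only on your fixed-tilt route, since $\int|\varphi|^d\lesssim n^{-1/2}$ is uniform in $m$ and hence $\sum_{m\le n}|S_m^d\cap\Z^d|\lesssim \big(\frac{1+r}{1-r}\big)^d r^{-n} n^{-1/2}\sum_{k\ge 0} r^k$ (this is what the paper's factor $1/(1-z)$ does), rather than comparing consecutive two-sided estimates, whose multiplicative constants do not telescope.
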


Rewriting \eqref{eq: B_n small} in terms of binomial coefficients, we obtain the following corollary for $n\le d/2.$
\begin{corollary} \label{cor:lat:compa small}
For $d,n \in \N$ satisfying  $n \leq d/2$ we have 
\begin{equation*}
|B_n^d\cap\Z^d| \approx |S^d_n\cap\Z^d| \approx 2^n \binom{d}{n} \exp \Big(\frac{n \alpha}{2}+ \frac{n \alpha^2}{4} + O (n \alpha^3) \Big).
\end{equation*}
In particular, if $n \lesssim d^{1/2}$, then
\begin{equation*}
|B^d_n\cap\Z^d| \approx |S^d_n\cap\Z^d| \approx 2^n \binom{d}{n}=|S^d_n\cap \{-1,0,1\}^d|.
\end{equation*}
\end{corollary}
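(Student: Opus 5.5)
We derive the corollary from \eqref{eq: B_n small} in Theorem~\ref{thm:lat:extract} by comparing its right-hand side with $2^n\binom{d}{n}$ via Stirling's formula. Since $n\le d/2$ gives $d-n\ge d/2$, Stirling applies with uniform constants:
\[
\binom{d}{n}\approx \frac{1}{\sqrt{n}}\,\frac{d^d}{n^n(d-n)^{d-n}}
=\frac{1}{\sqrt n}\,\alpha^{-n}(1-\alpha)^{-(d-n)}.
\]
Here the factor $\sqrt{d/(d-n)}$ lies in $[1,\sqrt2]$ and is absorbed into $\approx$. The case $n=d-n$ or small $n$ needs no special care, because Stirling's formula $m!\approx \sqrt{m}(m/e)^m$ holds uniformly for $m\ge1$. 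Therefore
\[
(2e/\alpha)^n\frac{1}{\sqrt n}\Big/\Big(2^n\binom dn\Big)\approx \exp\big(n + (d-n)\log(1-\alpha)\big).
\]
Writing $d-n=n(1-\alpha)/\alpha$, the exponent becomes $n\,\phi(\alpha)$ with
\[
\phi(\alpha)=1+\frac{1-\alpha}{\alpha}\log(1-\alpha).
\]

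Next we Taylor expand $\phi$. Using $\log(1-\alpha)=-\sum_{k\ge1}\alpha^k/k$, we get
\[
\frac{1-\alpha}{\alpha}\log(1-\alpha)=-\Big(1+\frac{\alpha}{2}+\frac{\alpha^2}{3}+\frac{\alpha^3}{4}+\dots\Big)+\Big(\alpha+\frac{\alpha^2}{2}+\frac{\alpha^3}{3}+\dots\Big).
\]
Hence $\phi(\alpha)=\frac{\alpha}{2}+\frac{\alpha^2}{6}+O(\alpha^3)$ uniformly for $\alpha\le1/2$. The remainder is bounded there since the series converges absolutely and $\phi$ is analytic on $|\alpha|<1$.

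Adding $b(\alpha)=\alpha^2/12+O(\alpha^4)$ from Theorem~\ref{thm:lat:extract} gives total exponent
\[
n\Big(\frac{\alpha}{2}+\frac{\alpha^2}{6}+\frac{\alpha^2}{12}\Big)+O(n\alpha^3)=\frac{n\alpha}{2}+\frac{n\alpha^2}{4}+O(n\alpha^3).
\]
The $O(\alpha^4)$ tail of $b$ is uniform for $\alpha\le 1/2<2/3$, since $b$ is holomorphic on the larger disk. This proves the first display.

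For the second claim, suppose $n\lesssim d^{1/2}$. Then $n\alpha=n^2/d\lesssim1$, so the whole exponent is $O(1)$ and its exponential is $\approx1$. Finally, $2^n\binom dn$ counts the points of $\{-1,0,1\}^d$ with exactly $n$ nonzero coordinates, each equal to $\pm1$, and these are exactly the points of $S^d_n\cap\{-1,0,1\}^d$.

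The only step needing care is the uniformity of the constants in Stirling's formula and in the $O(\alpha^3)$ remainder over the full range $n\le d/2$. Both follow from the restriction $\alpha\le 1/2$, which keeps $d-n$ comparable to $d$ and keeps $\alpha$ inside the discs of analyticity of $\phi$ and $b$.
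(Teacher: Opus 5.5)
Your proof is correct and follows essentially the same route as the paper. You start from \eqref{eq: B_n small}, compare $(2e/\alpha)^n n^{-1/2}$ with $2^n\binom{d}{n}$ via Stirling's formula, and add the expansion $b(\alpha)=\alpha^2/12+O(\alpha^4)$. The only difference is bookkeeping. The paper applies Stirling to $n!$ alone and expands $\log\prod_{k=1}^{n-1}(1-k/d)$ termwise, absorbing an $O(1)$ error. You instead apply Stirling to all three factorials and expand the closed form $\phi(\alpha)=\sum_{k\ge1}\alpha^k/(k(k+1))$, which is slightly cleaner.
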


Rewriting \eqref{eq: B_n large} in terms of the Lebesgue measure of the cross-polytope $\Vol (B^d_n)$, we obtain the following corollary for $n\ge 2d$.
\begin{corollary} \label{cor:lat:compa large}
For $d,n \in \N$ satisfying  $n \ge 2d$ we have 
\begin{equation*}
|B^d_n\cap\Z^d| \approx \Vol (B^d_n) \exp \Big(  \frac{d}{12 \alpha^2} + O(d/\alpha^3) \Big).
\end{equation*}
In particular, if $n \gtrsim d^{3/2}$, then
\begin{equation}
\label{eq: compa dis con}
|B^d_n\cap\Z^d|\approx \Vol (B^d_n).
\end{equation}
\end{corollary}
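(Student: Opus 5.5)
We derive Corollary~\ref{cor:lat:compa large} from \eqref{eq: B_n large} in Theorem~\ref{thm:lat:extract}. The idea is to compare the right-hand side of \eqref{eq: B_n large} with the explicit formula $\Vol(B^d_n)=2^d n^d/d!$. Recall that $\alpha=n/d$, so $n\ge 2d$ means $\alpha^{-1}\le 1/2<2/3$. Hence $b(\alpha^{-1})$ is defined by the convergent power series in Theorem~\ref{thm:lat:extract}.

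The first step is Stirling's formula, which is uniform up to absolute constants: $d!\approx \sqrt{d}\,(d/e)^d$ for all $d\in\N$. It gives
\[
\Vol(B^d_n)=\frac{2^d n^d}{d!}\approx \frac{(2e\,n/d)^d}{\sqrt{d}}=(2e\alpha)^d\frac{1}{\sqrt d}.
\]
Combining this with \eqref{eq: B_n large} yields
\[
|B^d_n\cap\Z^d|\approx \Vol(B^d_n)\exp\big(d\,b(\alpha^{-1})\big).
\]

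The second step is to expand $b$ at $z=\alpha^{-1}\in(0,1/2]$. We need a bound $|b(z)-z^2/12|\le Cz^4\le Cz^3$ for $|z|\le 1/2$. This holds because $b$ is holomorphic on $|z|<2/3$. By the Cauchy estimates on the circle $|z|=0.6$, the coefficients satisfy $|b_k|\le M\,0.6^{-2k}$, where $M=\max_{|z|=0.6}|b|<\infty$. The tail $\sum_{k\ge2}b_kz^{2k}$ is then bounded by a geometric series, which gives $O(z^4)$ uniformly for $|z|\le 1/2$. Therefore
\[
d\,b(\alpha^{-1})=\frac{d}{12\alpha^2}+O(d/\alpha^4)=\frac{d}{12\alpha^2}+O(d/\alpha^3),
\]
which proves the first claim.

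For the ``in particular'' statement, assume $n\ge c\,d^{3/2}$ and also $n\ge 2d$. Then $\alpha\ge c\,d^{1/2}$, so
\[
\frac{d}{\alpha^2}\le c^{-2}\quad\text{and}\quad \frac{d}{\alpha^3}\le c^{-2}.
\]
The exponential factor is therefore bounded above and below by constants depending only on $c$, and \eqref{eq: compa dis con} follows.

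In the remaining range $c\,d^{3/2}\le n<2d$ we have $d\lesssim 1$, so only finitely many $d$ occur. For these, $\alpha\gtrsim 1$ and $d$ is bounded, so the same reasoning applies. Alternatively, the bounded-dimension comparison $|B^d_n\cap\Z^d|\approx n^d\approx\Vol(B^d_n)$ holds directly.

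The only mildly delicate points are that Stirling's formula and the Taylor remainder must be uniform with absolute constants. Stirling's formula is uniform for all $d\ge1$. The Taylor remainder is uniform by the Cauchy estimate, since $1/2$ is strictly inside the disc of holomorphy.
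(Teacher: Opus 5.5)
Your proposal is correct and follows the paper's proof: you combine \eqref{eq: B_n large} with Stirling's formula $\Vol(B^d_n)=(2n)^d/d!\approx(2e\alpha)^d/\sqrt d$ and then expand $d\,b(\alpha^{-1})=\frac{d}{12\alpha^2}+O(d/\alpha^3)$. Your Cauchy-estimate bound on the tail of $b$ and your explicit check of the case $n\gtrsim d^{3/2}$ spell out details the paper leaves implicit. One small correction: for the finitely many $d$ with $cd^{3/2}\le n<2d$, only your direct bounded-dimension comparison $|B^d_n\cap\Z^d|\approx n^d\approx\Vol(B^d_n)$ is valid, because \eqref{eq: B_n large} requires $n\ge 2d$.
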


\begin{remark}
    \label{rem: uni asym} The uniform estimates meant by the symbol $\approx$ appearing in Theorem~\ref{thm:lat:extract} and Corollaries~\ref{cor:lat:compa small}~and~\ref{cor:lat:compa large} are sufficient for our purposes. Therefore, we do not pursue more refined asymptotics.
\end{remark}

\begin{remark}
    \label{rem: Ehrart} Using the theory of Ehrhart polynomials, one can prove 
\begin{equation}
\label{eq: compa dis con sharp}
|B^d_n\cap\Z^d|\ge  \Vol (B^d_n)
\end{equation}
for all $d,n \in \NN$, which can be viewed as a one-sided strengthening of \eqref{eq: compa dis con}.
For a $d$-dimensional polytope $P$ with integer vertices its Ehrhart polynomial is defined by $i(P,n)=|nP \cap \ZZ^d|.$ Ehrhart~\cite{Ehr} showed that $i(P,n)$ is indeed a polynomial $\sum_{k=0}^d c_k(P) n^k$ of degree $d$ in $n$, whose leading coefficient $c_d(P)$ is equal to the Lebesgue measure $\Vol(P)$. We refer to \cite[Chapter 11.3]{Rob} for a comprehensive description. In our case, $i(B_1^d,n)$ is clearly a polynomial, since \eqref{eq: Delsum} can be rewritten as
    $D(d,n)=i(B_1^d,n)=\sum_{k=0}^{d} 2^k \binom{d}{k} \binom{n}{k}$,
where we use the convention that $\binom{n}{k}=0$ if $k>n.$ An Ehrhart polynomial $i(P,n)$ is called Ehrhart positive if all of its coefficients $c_k(P)$ are positive; see \cite{Liu2019}. Liu verified that $i(B_1^d,n)$ is Ehrhart positive; see \cite[Section~2.2.1]{Liu2019}. This property is not clear from \eqref{eq: Delsum} as this expression is not of the form $\sum_{k=0}^{d} c_k(B_1^d) n^k.$
Now, knowing that $i(B_1^d,n)$ is Ehrhart positive and using the fact that $c_d(B_1^d)=\Vol(B_1^d)$, we easily obtain \eqref{eq: compa dis con sharp}.
\end{remark}

The second, and the main, purpose of this paper is to prove dimension-free estimates for discrete maximal functions over the cross-polytopes. The lattice point count provided in Theorem~\ref{thm:lat:extract} will be an important ingredient in all these dimension-free estimates. Let $f \colon \ZZ^d \to \RR$. We set
\[
M_R^d f(x) \coloneqq \frac{1}{|B_R^d \cap \ZZ^d|} \sum_{y \in B_R^d \cap \ZZ^d} f(x-y), \qquad x \in \ZZ^d,
\]
for every $d \in \NN$ and $R \in [0,\infty)$, where $B_0^d$ is the singleton $\{0\}$ in $\RR^d$. Note that, since $B_R^d \cap \ZZ^d=B_{\lfloor R \rfloor}^d \cap \ZZ^d$, we may restrict to nonnegative integers $R$. For any $E\subseteq [0,\infty)$ we define the associated maximal function $M_{*,E}^d f$ by  
\[
M_{*,E}^d f(x) \coloneqq
\sup_{R \in E} |M_R^d f(x)|, \qquad x \in \ZZ^d,
\]
and we use the abbreviated notation $M_{*}^d f$ if $E = [0, \infty)$. We also consider the corresponding spherical (boundary) averages
\[
\mathcal S_R^d f(x) \coloneqq \frac{1}{| S_R^d \cap \ZZ^d|} \sum_{y \in \mathcal S_R^d \cap \ZZ^d} f(x-y), \qquad x \in \ZZ^d,
\]
where again $S_0^d$ is the singleton $\{0\}$ in $\RR^d$, and their maximal function
\[
\mathcal S_{*,E}^d f(x) \coloneqq
\sup_{R \in E} | \mathcal S_R^d f(x)|, \qquad x \in \ZZ^d.
\]

We shall prove dimension-free results for maximal functions in three separate regimes of radii. Our first result here is a dimension-free bound for all $p \in (1,\infty)$ and large radii. The input from the lattice point count required in its proof is given in Corollary~\ref{cor:lat:compa large}.

\begin{theorem} \label{T1}
	Fix $c \in \RR_+$ and $p \in (1, \infty)$. Then there exists a constant $C(c,p) \in \RR_+$ such that for all $d \in \NN$ we have the dimension-free bound
 	\[
	\| M_{*,[cd^{3/2},\infty)}^d f \|_{\ell^p(\ZZ^d)} \leq C(c,p) \| f \|_{\ell^p(\ZZ^d)}, \qquad f\in \ell^p(\ZZ^d).
	\]
\end{theorem}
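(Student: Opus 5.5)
We compare the discrete averages $M_R^d$ with continuous averages over cross-polytopes in $\RR^d$, and then invoke the known dimension-free $L^p(\RR^d)$ bounds for maximal functions over symmetric convex bodies. For the $\ell^1$ ball these bounds hold for all $p\in(1,\infty)$; this is the result of Bourgain and Carbery type estimates, sharpened for $B^q$ balls by Müller, with the full range of $p$ for cross-polytopes.

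\textbf{Step 1: transference to $\RR^d$.} Given $f\colon\ZZ^d\to\RR$ with $f\ge 0$, extend it to $F(x)=f(\lfloor x\rceil)$, constant on unit cubes $Q(y)=y+[-1/2,1/2)^d$, so that $\|F\|_{L^p(\RR^d)}=\|f\|_{\ell^p(\ZZ^d)}$. For $y\in B_R^d\cap\ZZ^d$, the cube $Q(y)$ lies in $B^d_{R+d/2}$, since every point of $Q(y)$ differs from $y$ by at most $1/2$ in each coordinate. Hence for every $z\in Q(x)$ we have $Q(x-y)\subseteq z-y+[-1,1]^d$, which gives the pointwise bound
\[
M_R^d f(x)\le \frac{1}{|B_R^d\cap\ZZ^d|}\int_{B^d_{R+d}} F(z-u)\,du\cdot(\text{up to averaging }z\in Q(x)).
\]
Averaging over $z\in Q(x)$ shows
\[
M_R^d f(x)\le \frac{\Vol(B^d_{R+d})}{|B^d_R\cap\ZZ^d|}\int_{Q(x)} \mathcal M_{R+d}F(z)\,dz ,
\]
where $\mathcal M_t$ denotes the continuous average over $B^d_t$. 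By H\"older, $\|M^d_{*,[cd^{3/2},\infty)}f\|_{\ell^p}\le K\,\|\mathcal M_* F\|_{L^p}$, with
\[
K=\sup_{R\ge cd^{3/2}}\frac{\Vol(B^d_{R+d})}{|B_R^d\cap\ZZ^d|}.
\]

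\textbf{Step 2: bound on $K$.} This is where the lattice count enters. By Corollary~\ref{cor:lat:compa large}, for $R\ge cd^{3/2}$ with $R\ge 2d$ we have $|B_R^d\cap\ZZ^d|\gtrsim_c \Vol(B^d_R)$. If $c$ is small, the finitely many small $d$ for which $cd^{3/2}<2d$ can be absorbed into the constant, because each fixed dimension is trivially bounded. Alternatively, Remark~\ref{rem: Ehrart} gives the bound directly for all $R$. We also have
\[
\frac{\Vol(B^d_{R+d})}{\Vol(B^d_R)}=\Big(1+\frac dR\Big)^d\le e^{d^2/R}.
\]
This is not bounded when $R\approx d^{3/2}$, and it is the main obstacle: the crude enlargement by $d$ in $\ell^1$ is too lossy. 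The fix is to enlarge more carefully, averaging the cube over a random point rather than taking the worst case. Writing $z=x+\theta$ with $\theta$ uniform in $[-1/2,1/2)^d$, the displacement $y\mapsto y+\theta$ changes $|y|_1$ by $\sum_i \theta_i\sgn y_i$ when $y_i\neq 0$. This sum is a mean-zero quantity with fluctuations of order $\sqrt d$, plus a term of at most $d/2$ coming from the coordinates with $y_i=0$. For $R\ge cd^{3/2}$, a typical point of $B_R$ has only about $d^2/R\lesssim \sqrt d$ zero coordinates. So I would instead compare $\mathbb 1_{B_R}*\mathbb 1_{Q}$ pointwise with the continuous kernel $\mathbb 1_{B_{R+C\sqrt d}}$ plus an error. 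The ratio $(1+C\sqrt d/R)^d\le e^{C/c}$ is bounded, and the error kernel needs separate control.

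To handle that error, I would split the kernel into $\sum_j$ over dyadic ranges of the deviation $\sum_i\theta_i\sgn y_i$ and of the number of zero coordinates. Each piece is dominated by a continuous average over $B_{R+2^j\sqrt d}$, weighted by a Gaussian tail $e^{-c4^j}$ times the volume ratio $e^{2^j/c}$. Summing over $j$ gives a bounded constant times $\sup_t\mathcal M_tF$. Verifying this domination uniformly in $d$ is the step I expect to be hardest.

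\textbf{Step 3.} Combining the steps, $\|M^d_{*,[cd^{3/2},\infty)}f\|_{\ell^p}\lesssim_{c}\|\mathcal M_*F\|_{L^p(\RR^d)}\le C_p\|f\|_{\ell^p}$ by the dimension-free continuous theorem. For general $f$ we use $|f|$.
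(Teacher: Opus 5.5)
\textbf{There is a genuine gap in Step~2, at the point you flag as hardest, and it will not close by verification.} The claimed domination of the $j$-th error piece by a continuous average ``weighted by a Gaussian tail $e^{-c4^j}$'' is false.

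On its support, the $j$-th piece of your kernel still equals $1/|B_R\cap\ZZ^d|$, and that support contains points of $\ell^1$-norm about $R+2^j\sqrt d$. A pointwise majorant by an average over $B_{R+2^j\sqrt d}$ therefore costs the full factor $\Vol(B_{R+2^j\sqrt d})/|B_R\cap\ZZ^d|\approx e^{2^j/c}$, with no Gaussian gain. The small $L^1$ mass of a kernel piece is invisible to a pointwise comparison, and pointwise comparison is the only kind that survives the supremum over $R$. So the sum over $j$ diverges (the top pieces cost about $e^{\sqrt d/c}$), and Step~3's bound $\|M_{*,[cd^{3/2},\infty)}f\|_{\ell^p}\lesssim_c\|\mathcal M_*F\|_{L^p}$ does not follow. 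The same objection applies to your dyadic split by the number of zero coordinates: the rarity of such lattice points is a statement about mass, not a pointwise bound.

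\textbf{The missing idea is to use the randomness of the cube in the opposite direction.} Since $F$ is constant on unit cubes, $f(x-y)$ is recovered by averaging $F$ over any portion of the cube around $x-y$ of measure at least $\delta$. So you do not need to control where the whole cube goes. You only need a fixed proportion of it to stay inside $B_R$ itself.

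This is Lemma~\ref{L1}. If $y$ has at most $\lfloor C\sqrt d\rfloor+1$ zero coordinates, the central limit theorem gives probability at least $\delta(c,C)$ that $\sum_{y_i\neq0}(u_i+v_i)\sgn y_i$ is below minus the number of zeros. Hence $\int_{Q^d}\int_{Q^d}\ind{B_R}(y+u+v)\,du\,dv\ge\delta$. Combined with $\Vol(B_R)\lesssim|B_R\cap\ZZ^d|$, this gives $M_R^lf(x)\lesssim\delta^{-1}\int_{Q^d}\mathcal M_RF(x+u)\,du$ with no enlargement at all (Lemma~\ref{L2}).

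For the remaining points, with $\lfloor C\sqrt d\rfloor+l$ zeros, smallness of mass does enter, but only through interpolation:
\begin{itemize}
\item Lemma~\ref{L3} shows $|B^{l+1}_n\cap\ZZ^d|\le\frac12|B^{l}_n\cap\ZZ^d|$. This is where $n\ge cd^{3/2}$ (with $C\ge 2/c$) is really needed, and it gives an $\ell^\infty$ bound $2^{1-l}$.
\item Enlarging to $B_{n+2(l-1)}$ gives an $\ell^q$ bound $C_p^{l-1}$ (Lemmas~\ref{L4} and~\ref{L5}).
\item Interpolating at $q=(1+p)/2$ yields geometrically decaying $\ell^p$ bounds, which sum.
\end{itemize}

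If you want to keep your decomposition by the deviation $\sum_i\theta_i\sgn y_i$, the Gaussian tail can only be used in this same way. You would take it as an $\ell^\infty$ bound $e^{-c4^j}$ for the $j$-th piece and interpolate it against the $\ell^q$ bound $e^{2^j/c}$ from the enlargement; the resulting $\ell^p$ constants are then summable. Your proposal contains no such interpolation step.
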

\noindent 

Our second result refers to dimension-free bounds for $p \in [2,\infty )$ and small radii. Here the crucial consequence of Theorem~\ref{thm:lat:extract} is a couple of concentration results, Theorem~\ref{thm:3.1} and Corollary~\ref{cor:3.2}, from Section~\ref{sec: full small}.

\begin{theorem} \label{T2}
	Fix $\varepsilon \in (0,1)$. Then there exists a constant $C(\varepsilon) \in \RR_+$ such that for all $d \in \NN$ and $p\in [2,\infty)$ we have the dimension-free bound
   \begin{equation}
   \label{eq: max spher small}
	\| \mathcal S_{*,(0,d^{1-\varepsilon}]}^d f \|_{\ell^p(\ZZ^d)} \leq C(\varepsilon) \| f \|_{\ell^p(\ZZ^d)},\qquad f\in \ell^p(\ZZ^d).
	\end{equation}
     Thus, for all $d \in \NN$ and $p\in [2,\infty)$ we also have the dimension-free bound
 	\begin{equation*}
	\| M_{*,(0,d^{1-\varepsilon}]}^d f \|_{\ell^p(\ZZ^d)} \leq C(\varepsilon) \| f \|_{\ell^p(\ZZ^d)},\qquad f\in \ell^p(\ZZ^d).
	\end{equation*}
\end{theorem}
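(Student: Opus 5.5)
Plan for Theorem~\ref{T2}. We prove \eqref{eq: max spher small} on $\ell^2(\ZZ^d)$ and interpolate with the trivial $\ell^\infty$ bound (constant $1$) to get all $p\in[2,\infty)$. The statement for $M^d_{*,(0,d^{1-\varepsilon}]}$ then follows pointwise. Indeed, $M_R^d f$ is a convex combination of $\mathcal S_k^d f$, $0\le k\le R$, with weights $|S_k^d\cap\ZZ^d|/|B_R^d\cap\ZZ^d|$, so $|M_R^d f|\le \mathcal S^d_{*,[0,R]}|f|$. The case $R=0$ is the identity and is harmless.

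On $\ell^2$ we use Fourier multipliers. The multiplier of $\mathcal S_n^d$ is
\[
\sigma_n(\xi)=\frac{1}{|S_n^d\cap\ZZ^d|}\sum_{y\in S_n^d\cap\ZZ^d} e^{2\pi i y\cdot\xi}, \qquad \xi\in\T^d .
\]
The key input is the concentration phenomenon from Theorem~\ref{thm:3.1} and Corollary~\ref{cor:3.2}, which follows from Theorem~\ref{thm:lat:extract} and Corollary~\ref{cor:lat:compa small}. For $n\le d^{1-\varepsilon}$, a uniformly random point of $S_n^d\cap\ZZ^d$ has, with overwhelming probability, all but $O(n\alpha)$ of its nonzero coordinates equal to $\pm1$, where $\alpha=n/d\le d^{-\varepsilon}$. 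The lattice count $\approx 2^n\binom dn e^{O(n\alpha)}$ controls exactly this proportion. So $\mathcal S_n^d$ is a small perturbation of the average over $S_n^d\cap\{-1,0,1\}^d$. The latter has multiplier $\tau_n(\xi)$, a Krawtchouk-type polynomial in the variables $\cos 2\pi\xi_j$; by symmetry it is the average over $n$-subsets $J\subseteq\{1,\dots,d\}$ of $\prod_{j\in J}\cos 2\pi\xi_j$. We split $\sup_n|\mathcal S_n^d f|\le \sup_n|T_n f|+\sup_n|(\mathcal S_n^d-T_n)f|$, where $T_n$ is the operator with multiplier $\tau_n$.

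For the main term, $\tau_n$ is a symmetric elementary-polynomial average in $x_j=\cos2\pi\xi_j\in[-1,1]$. We would bound $\sup_n|T_nf|$ by Stein's semigroup method, comparing with the heat-type semigroup $P_t$ whose multiplier is $\prod_j e^{-t(1-x_j)}$, or with the Bernoulli/hypercube averaging semigroup. The steps are as follows. First, $|\tau_n(\xi)-\tau_{n'}(\xi)|$ and $|\tau_n-P_{n/d}|$ are controlled by $\min(\kappa(\xi) n/d,(\kappa(\xi)n/d)^{-1/2})$, where $\kappa(\xi)=\sum_j\sin^2\pi\xi_j$; this is a Krawtchouk/binomial estimate with dimension-free constants. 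Second, the dyadic square function handles the comparison with the semigroup, whose maximal function is dimension-free by Stein. Third, a Rademacher–Menshov or variational argument within dyadic blocks $[2^k,2^{k+1})$ uses $|\tau_{n+1}-\tau_n|\lesssim \kappa n/d^2$-type differences. A delicate point is the point $\xi=(1/2,\dots,1/2)$, where $x_j=-1$; there $\tau_n$ oscillates in sign like $(-1)^n$. We handle this by splitting $n$ by parity and, if needed, conjugating by the character $e^{i\pi\sum x_j}$.

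For the error term, we estimate $\|\sup_n|(\mathcal S_n^d-T_n)f|\|_{\ell^2}\le \sum_{n\le d^{1-\varepsilon}}\|\sigma_n-\tau_n\|_\infty\|f\|_2$. We have $\|\sigma_n-\tau_n\|_\infty\le 2\,\mathbb P(\text{some }|y_j|\ge2)$, which by Corollary~\ref{cor:lat:compa small} is $\lesssim n\alpha=n^2/d$. This is summable only for $n\lesssim d^{1/2}$, where even the probability itself is small. For $d^{1/2}<n\le d^{1-\varepsilon}$, we instead decompose by the number $m$ of coordinates with $|y_j|\ge2$; this $m$ concentrates near $n\alpha$ by Theorem~\ref{thm:3.1}. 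Each piece is an average over $\{-1,0,1\}$-type spheres on the remaining coordinates composed with a small-radius average on $m$ coordinates, and the same semigroup comparison applies piece by piece. Only $O(1/\varepsilon)$ levels of iteration are needed, since each step reduces the radius from $n$ to about $n\alpha\le nd^{-\varepsilon}$; this is where the constant $C(\varepsilon)$ arises. I expect this error-term bookkeeping to be the main obstacle: obtaining dimension-free multiplier bounds for the mixed pieces uniformly in $m$, so that the exponential concentration of Corollary~\ref{cor:3.2} can absorb the sum over $n$ and $m$.
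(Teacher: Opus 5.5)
Your reductions are fine and match the paper: prove the $\ell^2$ bound, interpolate with the trivial $\ell^\infty$ bound, and use $|M_Rf|\le \mathcal S_{*,[0,R]}|f|$ via the convex-combination identity. Your main term $\sup_n|T_nf|$ is also plausible; in any case it is the case $K=1$ of \eqref{eq: combi max}.

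\textbf{The gap.} It lies in the range $n\gg d^{1/2}$, which you defer as ``bookkeeping''. There the $\{-1,0,1\}$-part of $S_n\cap\ZZ^d$ has relative size about $e^{-n\alpha/2}$, so essentially all of $\mathcal S_n$ lives on the mixed pieces, and these cannot be handled piece by piece.

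For fixed $n$ the mass is spread over many profiles $\vec j=(j_1,\dots,j_K)$, the numbers of coordinates equal to $\pm1,\dots,\pm K$. Already $j_2$ fluctuates on the scale $\sqrt{n\alpha}\to\infty$. So summing separate maximal bounds loses an unbounded factor. Concentration only removes tails, not the bulk, and in Corollary~\ref{cor:3.2} it is only $O(1/d)$, not exponential.

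What you need is that $\sup_n$ of these weighted averages is dominated by \emph{one} maximal operator, uniform over a $K$-parameter family. Composing one-parameter maximal functions cannot give this, because the pieces are not convolutions of commuting one-parameter averages. The $\pm1$ coordinates and the coordinates with $|y_j|\ge 2$ must have disjoint supports. A convolution would create about $j_1j_2/d\approx n^3/d^2$ overlaps, which is large for $n\gg d^{2/3}$.

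The missing ingredient is exactly the dimension-free multiparameter bound \eqref{eq: combi max} for $\sup_{\vec j\in\vec J_{\le d/2K}}|\mathcal D_{\vec j}f|$, which the paper imports from \cite[Theorem~1.5]{NiWr}. With it, Proposition~\ref{prop Sn bounded by D} writes the multiplier of $\mathcal S_n$, on each of $\T^d_0$ and $\T^d_1$, as a nonnegative combination of the $\beta_{\vec j}$ with total mass at most $1$. The errors are $O(1/d+2^{-n/2}+1/n)$, and the supremum over $n$ is then dominated pointwise by that single maximal function.

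\textbf{The last level of your iteration.} You also have no mechanism for what remains after $K\approx 1/\varepsilon$ steps. There are still coordinates with $|x_i|>K$, of total mass at most $a_{K,\varepsilon}$ (Theorem~\ref{thm:3.1}, Corollary~\ref{cor:3.2}). They occur with non-negligible probability and do not fit into any $D_{\vec j}$.

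The paper removes them on the Fourier side. On $\T^d_\omega$ it replaces $e(z\cdot\xi)$ by $\sgn(\omega,z)$ at cost $O(1/n)$. This uses product estimates for $\beta_{\vec j}$, together with Corollary~\ref{cor:4.2}: all but a $2^{-n/2}$ fraction of $S_n\cap\ZZ^d$ has at least $n/2$ coordinates equal to $\pm1$.

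These errors are then absorbed by a square function, i.e.\ by square-summability in $n$, not by $\sum_n\|\sigma_n-\tau_n\|_\infty\|f\|_2$. Your $\ell^1$-summation is too lossy even in the first range. For example, $\sum_{n\le N}n^2/d$ is bounded only for $N\lesssim d^{1/3}$, not $d^{1/2}$ as you claim.
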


Our third result concerns $p \in [2, \infty)$ and dyadic radii belonging to $\mathcal D \coloneqq \{2^n : n\in \NN\}$, and its proof relies on Theorem~\ref{thm:lat:extract} via Corollary~\ref{cor: iso}. This third result and the second part of Theorem~\ref{T2} will also be covered by more general results in a forthcoming paper by the second author~\cite{Ni3}.
\begin{theorem} \label{T3}
	There exists a universal constant $C \in \RR_+$ such that for all $d \in \NN$ and $p\in [2,\infty)$ we have the dimension-free bound
    \begin{equation*}
	\| M_{*,\mathcal D}^d f \|_{\ell^p(\ZZ^d)} \leq C \| f \|_{\ell^p(\ZZ^d)},\qquad f\in \ell^p(\ZZ^d).
	\end{equation*}
\end{theorem}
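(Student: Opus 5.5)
Since $M_{*,\mathcal D}^d f \le \|f\|_{\ell^\infty}$, Marcinkiewicz interpolation reduces the matter to the single case $p=2$. We then follow the Bourgain--Stein square-function scheme: compare each average $M_{2^n}^d$ with a semigroup-type approximant whose maximal function is dimension-free, and control the errors by Fourier multiplier estimates.

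Write $m_R(\xi)=|B_R^d\cap\Z^d|^{-1}\sum_{y\in B_R^d\cap\Z^d}e^{2\pi i y\cdot\xi}$ for the multiplier of $M_R^d$ on $\T^d$. As approximants we take $P_R$, the multiplier $e^{-R\kappa(\xi)/d}$ of a symmetric random walk (or heat semigroup) on $\Z^d$, with $\kappa(\xi)=\sum_{j}\sin^2(\pi\xi_j)$. Stein's maximal theorem for symmetric diffusion semigroups gives $\|\sup_{t>0}|P_t f|\|_{\ell^2}\lesssim\|f\|_{\ell^2}$ with an absolute constant. It therefore suffices to prove
\[
\Big\|\Big(\sum_{n}|(M_{2^n}^d-P_{2^n})f|^2\Big)^{1/2}\Big\|_{\ell^2}\lesssim\|f\|_{\ell^2},
\]
and by Plancherel this reduces to $\sum_n|m_{2^n}(\xi)-p_{2^n}(\xi)|^2\lesssim1$ uniformly in $\xi$ and $d$. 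For that we need the two dimension-free multiplier estimates
\[
|m_R(\xi)-1|\lesssim R\,\kappa(\xi)/d, \qquad |m_R(\xi)|\lesssim (R\,\kappa(\xi)/d)^{-\delta},
\]
for some fixed $\delta>0$. The same two bounds hold trivially for $p_R$. Given both, the geometric sum over dyadic $R$ is bounded.

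The small-frequency bound $|m_R-1|\lesssim R\kappa/d$ comes from symmetry: $1-m_R(\xi)=\mathbb E\sum_j 2\sin^2(\pi y_j\xi_j)\ldots$, which we control by $\mathbb E\sum_j y_j^2\sin^2(\pi\xi_j)$ or a cruder coordinatewise bound. We then need $\mathbb E[y_j^2]\lesssim R/d$, or at least $\mathbb E|y_j|\lesssim R/d$ combined with a bound on $\mathbb{P}(y_j\neq0)$. Here $y$ is uniform on $B_R^d\cap\Z^d$. We get this from Theorem~\ref{thm:lat:extract}, through the concentration fact (Corollary~\ref{cor: iso}, presumably) that the $\ell^1$ mass of a uniform point is spread over roughly $\min(R,d)$ coordinates with comparable sizes. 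That fact follows from exchangeability together with the counts $D(d-1,R-k)/D(d,R)$, which the uniform formula \eqref{eq: B_n small univ} controls.

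The large-frequency decay is the main obstacle. Our first attempt is to decompose $y$ according to its support pattern and sign pattern. Conditionally on the set of nonzero coordinates and their absolute values, the signs are i.i.d.\ Rademacher, so $m_R(\xi)=\mathbb E\prod_{j\in\operatorname{supp}y}\cos(2\pi|y_j|\xi_j)$. When $R\le d$, the support is a nearly uniform random set of size $\approx R$ (by Corollary~\ref{cor: iso}), and most nonzero entries equal $\pm1$. Hence, up to errors,
\[
|m_R(\xi)|\lesssim\mathbb E\prod_{j\in J}|\cos 2\pi\xi_j|, \qquad |J|\approx R \text{ uniform},
\]
and a standard Krawtchouk-type or AM--GM argument bounds this by $\exp(-cR\kappa/d)+(R\kappa/d)^{-\delta}$. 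The delicate point is that coordinates with $|y_j|=2$ see $\cos(4\pi\xi_j)$, which is not small near $\xi_j=1/2$. To handle this we use that the proportion of $\pm1$ entries is bounded below uniformly, again by the lattice count. When $R>d$, we instead compare with the continuous $\ell^1$ ball, using Corollary~\ref{cor:lat:compa large} and the symmetry \eqref{eq: B change n and d} to transfer counts. We then invoke the known dimension-free decay of the continuous cross-polytope multiplier, adding a periodization/sampling error bounded via the $e^{d/(12\alpha^2)}$ factor for the intermediate dyadic scales; we expect this range to need the most care.
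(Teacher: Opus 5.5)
There is a genuine gap. Your overall scheme is the one from \cite{KMPW} that the paper adapts on the intermediate scales: reduce to $p=2$, compare each $M_{2^n}$ with a Markov semigroup, and bound $\sum_n|m_{2^n}-p_{2^n}|^2$ via Plancherel. The problem is that both multiplier estimates you feed into it are false, and the main failure cannot be fixed by adjusting constants.

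\textbf{The decay estimate fails at the antipodal point.} At $\xi=(\tfrac12,\dots,\tfrac12)$, $\kappa(\xi)=d$ is maximal, but $e(x\cdot\xi)=(-1)^{|x|_1}$. Hence $m_R(\xi)=|B_R\cap\Z^d|^{-1}\sum_{k\le R}(-1)^k|S_k\cap\Z^d|$.
\begin{itemize}
\item Since $|S_k\cap\Z^d|=\sum_j 2^j\binom dj\binom{k-1}{j-1}$ is a polynomial of degree $k$ in $d$, for fixed $R$ this equals $(-1)^R+O_R(d^{-1})$.
\item More generally, the saddle-point analysis of Section~\ref{sec: lat}, with $(1+z)^{-1}$ in place of $(1-z)^{-1}$, gives $|m_R(\xi)|\approx\frac{1-r}{1+r}$. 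This is bounded below by an absolute constant for all $R\le d$.
\item Meanwhile $R\kappa(\xi)/d=R$, so $|m_R(\xi)|\lesssim(R\kappa/d)^{-\delta}$ is false. At this $\xi$ your square sum contains about $\log d$ terms $|m_{2^n}-e^{-2^n}|^2\gtrsim 1$, so it is not bounded uniformly in $d$.
\end{itemize}
Your own reduction shows why. $|\cos 2\pi\xi_j|$ is close to $1$ near $\xi_j=\tfrac12$ just as near $0$, so $\mathbb E\prod_{j\in J}|\cos 2\pi\xi_j|=1$ at every $\xi\in\{0,\tfrac12\}^d$. The difficulty is therefore not confined to entries with $|y_j|=2$. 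In addition, at points of $\{0,\tfrac12\}^d$ with many coordinates of each kind, the true $|m_R|$ is small, but taking absolute values has discarded the Krawtchouk-type cancellation coming from the random support.

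Frequencies near $(\tfrac12,\dots,\tfrac12)$ need separate treatment. One way is to split $\T^d=\T^d_0\cup\T^d_1$ as in Section~\ref{sec: full small} and analyse the signed multiplier $|B_R\cap\Z^d|^{-1}\sum_{x\in B_R\cap\Z^d}(-1)^{|x|_1}e(x\cdot\xi)$ near $0$. For dyadic $R\le d$ the paper does not redo this; it quotes \cite[Theorem~1.1]{Ni2}. For $R\ge C_1 d$ the antipodal value is only about $d/(2R)$, which is summable over dyadic $R$. Correspondingly, the paper's decay bound $|m_n(\xi)|\lesssim(\alpha|\xi|)^{-1}+\alpha^{-1/7}$ has a frequency-independent term rather than being a pure negative power of the frequency.

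\textbf{Two further steps fail for $R>d$.}
\begin{itemize}
\item Your local bound has the wrong scaling. Take $\xi=(t,0,\dots,0)$ with $t\to0$; then $(1-m_R(\xi))/\kappa(\xi)\to 2\,\mathbb E y_1^2$. Here $\mathbb E y_1^2\approx\alpha^2=(R/d)^2$: Corollary~\ref{cor: iso} gives the upper bound, and $\mathbb E y_1^2\ge(\mathbb E|y_1|)^2$ with $\mathbb E|y_1|\approx R/d$ gives the lower bound. This is much larger than $R/d$. The approximant must be $e^{-\alpha^2\kappa(\xi)}$ and the local estimate $|m_R-1|\lesssim\alpha^2\kappa$, which is exactly what Corollary~\ref{cor: iso} is used for.
\item Comparison with the continuous cross-polytope cannot cover $d<R<d^{3/2}$. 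By Corollary~\ref{cor:lat:compa large}, $|B_R\cap\Z^d|/\Vol(B_R)\approx\exp(d/(12\alpha^2)+O(d/\alpha^3))$. This is exponentially large in $d$ when $\alpha\approx 1$, and is $O(1)$ only once $R\gtrsim d^{3/2}$, so it cannot serve as a bounded sampling error.
\end{itemize}

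\textbf{The paper's route.} It cites \cite[Theorem~1.1]{Ni2} for dyadic $R\le d$. It uses transference (Theorem~\ref{T1}) only for $R\ge C_1 d^{3/2}$. It disposes of the boundedly many dyadic scales in $(d,C_1 d]$ trivially. On $(C_1 d,C_1 d^{3/2})$ it proves the discrete decay estimate directly, following \cite[Proposition~4.2]{KMPW}.
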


\subsection{Historical background and related results}

Delannoy numbers were introduced at the end of 19th century by the French amateur mathematician H.~Delannoy~\cite{Delannoy}; see \cite{BanderierSchwer} some historical references. Among these numbers, one distinguishes the central Delannoy numbers $D(n,n)$ for which it is well-known that
\[
D(n,n)\approx \frac{(1+\sqrt{2})^{2n}}{\sqrt{n}}.
\]
As it should, this clearly matches our formula \eqref{eq: B_n small univ} with $\alpha=1$ and $r=\sqrt{2}-1.$ For the noncentral Delannoy numbers $D(d,n)$ such that $n/d=\alpha \in (\alpha_0,\alpha_1)$, where $0<\alpha_0<\alpha_1<\infty$ are fixed constants, Pemantle and Wilson \cite[p.~140]{PW2002} obtained the following asymptotics
\begin{equation}
\label{eq: Per Will bound} 
\sqrt{\frac{1}{2\pi}} \left(\frac{1+r}{1-r}\right)^{d} r^{-n}
\sqrt{\frac{\alpha}{d\big(1+\alpha-\sqrt{1+\alpha^2}\big)^2\sqrt{1+\alpha^2}}} .
\end{equation}
We note that \eqref{eq: Per Will bound} boils down to our bound \eqref{eq: B_n small univ} for such $\alpha$, since in this case $n\approx d$ and all factors depending on $\alpha$ are of order $1$.
The analysis in \cite{PW2002} is based on a more general approach of studying multivariate generating functions of multivariate sequences; see also the book \cite{ACSVbook}. For example, $F(z,w) \coloneqq \frac{1}{1-z-w-zw}$ is the bivariate generating function of $D(d,n).$ 

Dimension-free estimates for centered Hardy--Littlewood maximal operators were first studied in the continuous
context. For each $t \in \RR_+$ let $\mathcal B_t^{d}$ denote the operator that averages over centered continuous $d$-dimensional Euclidean balls of radius $t$. 
For any locally integrable $f \colon \RR^d \to \RR$ let \[
\mathcal B_*^d f \coloneqq \sup_{t \in \R_+}|\mathcal B_t^{d}f(x)|
\]
be the corresponding maximal function. In 1982 Stein~\cite{SteinMax} (see
also Stein and Str\"omberg \cite{StStr}) proved that for every fixed $p\in(1, \infty]$ there exists a constant $C_p \in \RR_+$ independent of $d \in \NN$ such that 
\begin{align*}
\|\mathcal B_*^d f\|_{L^p(\RR^d)} \leq C_p \| f \|_{L^p(\RR^d)}.
\end{align*}
In the following years, Bourgain \cite{B1,B2,B3}, Carbery \cite{Car1}, and M\"uller \cite{Mul1} significantly extended Stein's result by considering various symmetric convex bodies $G$ in place of the Euclidean ball. From the perspective of our paper, M\"uller's work \cite{Mul1} is the most important. It implies a dimension-free bound for the continuous Hardy--Littlewood maximal operator $\mathcal M_*^d$ corresponding to averages over cross-polytopes
\begin{equation*}
\mathcal M_t^d f(x) \coloneqq \frac{1}{\Vol(B_t^d)} \int_{B_t^d} f(x-y) \, {\rm d}x, \qquad x \in \RR^d.
\end{equation*}
Namely, for every fixed $p \in (1, \infty]$ there exists a constant $\mathcal C(p) \in \RR_+$ independent of $d \in \NN$ such that
\begin{equation} \label{E0}
\| \mathcal M_{*}^d f \|_{L^p(\RR^d)} \leq \mathcal C(p) \| f \|_{L^p(\RR^d)}.
\end{equation}

The study of dimension-free inequalities for centered Hardy--Littlewood maximal functions in the discrete context was initiated in \cite{BMSW3} by the third author together with Bourgain, Mirek, and Stein, and continued in \cite{balls} and \cite{KMPW}, among others. From the perspective of our paper, the earliest article \cite{BMSW3} and the recent contributions of the second author \cite{Ni1, Ni2} are the most relevant. The comparison principle formulated in \cite[Theorem~1]{BMSW3} implies the dimension-free bound in the large scales regime $n\ge d^2,$ i.e.
\[
	\| M_{*,[d^2,\infty)}^d f \|_{\ell^p(\ZZ^d)} \leq C(p) \| f \|_{\ell^p(\ZZ^d)}
	\]
for all $p\in (1,\infty)$. In \cite{Ni1} the second author proves a dimension-free bound for the dyadic maximal function in the small scales regime $n \leq d^{1/2}$, i.e.
\[
	\| M_{*,(0,d^{1/2}]\cap \mathcal D}^d f \|_{\ell^p(\ZZ^d)} \leq C(p) \| f \|_{\ell^p(\ZZ^d)}
	\]
for all $p\in [2,\infty)$. Very recently, this result was improved in \cite{Ni2} to 
\[
	\| M_{*,(0,d]\cap \mathcal D}^d f \|_{\ell^p(\ZZ^d)} \leq C(p) \| f \|_{\ell^p(\ZZ^d)}
	\]
for all $p\in [2,\infty)$. It is plausible to believe that the full dimension-free bound 
\begin{equation}
\label{eq: conjecture}
	\| M_{*}^d f \|_{\ell^p(\ZZ^d)} \leq C(p) \| f \|_{\ell^p(\ZZ^d)}
\end{equation}
    holds for all $p\in (1,\infty)$, just as in the continuous case \eqref{E0}. In particular, Theorems~\ref{T1},~\ref{T2},~and~\ref{T3} can be seen as partial progress toward \eqref{eq: conjecture}. 
    
    We remark that, in view of \cite[Theorem~1]{KMPW}, the discrete setting is always the harder one. Specifically, the $L^p(\RR^d)$ norms of the continuous maximal operators do not exceed the $\ell^p(\ZZ^d)$ norms of their discrete analogues.
    
\subsection{Structure of the article and our methodology}

Section~\ref{sec: lat} is devoted to the proof of Theorem~\ref{thm:lat:extract}. Our approach is similar to the one used in \cite[Section~3]{NiWr}. Namely, we apply \eqref{eq: Del l1ball} and express the number of lattice points in $B_n^d\cap \ZZ^d$ as the complex integral over the corresponding univariate generating function of the sequence $D(d,n)$, treating $d$ as a fixed parameter; see \eqref{eq:Cauchyball}. To analyze \eqref{eq:Cauchyball} we ultimately use the saddle point method as in \cite[Section~3]{NiWr}. However, contrary to \cite[Section~3]{NiWr}, our case is more explicit, which allows us to prove \eqref{eq: B_n small univ} all the way to $n\le d.$ After completing the proof of Theorem~\ref{thm:lat:extract} in this manner, we realized that the multivariate methods from \cite[p.~140]{PW2002} might yield the same result. In that work, the authors assume that $\alpha$ is bounded away from $0$ and $\infty.$ However, it seems plausible that their methods work even when this assumption is dropped. In summary, the univariate approach proposed here is simpler and sufficient for our purposes but possibly less fruitful.

Section~\ref{sec: full large} contains the derivation of Theorem~\ref{T1}. The proof relies on transferring the bounds for $\mathcal M_{*}^d$ to the case of $M_{*,[cd^{3/2},\infty)}^d$. In view of \cite[Theorem~1]{BMSW3} it was previously known that such a transference is possible for $M_{*,[d^{2},\infty)}^d$ instead. Thus, Theorem~\ref{T1} can be interpreted as a strengthened version of \cite[Theorem~1]{BMSW3} for $\ell^1$~balls. That such a strengthened version is possible is due to the geometry of the $\ell^1$~ball, which allows us to apply the central limit theorem at a crucial point in Lemma~\ref{L1}. On the other hand, Corollary~\ref{cor:lat:compa large} hints that, from the perspective of transference arguments, the exponent $\frac{3}{2}$ may be the best possible.  This is because it seems that for transference arguments to apply, the discrete and continuous balls of the same radius should be roughly of the same size. 

In Section~\ref{sec: full small} we justify Theorem~\ref{T2}. The proof uses two consequences of Theorem~\ref{thm:lat:extract}, that is, Theorem~\ref{thm:3.1} and Corollary~\ref{cor:3.2}, as well as dimension-free bounds for a multiparameter combinatorial maximal function  \cite[Theorem~1.5]{NiWr}. With these three ingredients, the analysis is very close to that in \cite[Section~4]{NiWr}. For this reason, we decided to keep our arguments brief, mainly pointing out the differences.

Finally, Section~\ref{sec: dyad} contains the proof of Theorem~\ref{T3}. Here the argument is a repetition of the one in \cite[Section~4]{KMPW}, with Corollary~\ref{cor: iso} being the only new ingredient not present in \cite{KMPW}. As before, we omit the details and focus exclusively on the proof of Corollary~\ref{cor: iso}, which again uses Theorem~\ref{thm:lat:extract}.

\subsection{Notation} The symbols $\CC, \, \RR, \, \ZZ$ and $\T \coloneqq \RR \setminus \ZZ$ have their usual meaning. We let $\RR_+ \coloneqq (0,\infty)$, $\NN \coloneqq \{1, 2, \dots \}$, and $[n] \coloneqq \{1, \dots, n\}$ for $n \in \NN$. 

For $X,Y \in \R_+$
we write $X \lesssim_{\delta} Y$ if there is a constant
$C_{\delta} \in \RR_+$ depending on $\delta$ such that $X\le C_{\delta}Y$.
We write $X \approx_{\delta} Y$ when
$X \lesssim_{\delta} Y\lesssim_{\delta} X$. We omit the subscript
$\delta$ if the implicit constants are universal.
Similar conventions apply to big~$O$ notation. In particular, $X=O_{\delta}(Y)$ means that $|X|\lesssim_{\delta} |Y|$.

For $y \in \R$ let $e(y) \coloneqq e^{- 2\pi i y}$.
Recall that the Fourier transform $\ell^2(\ZZ^d) \ni f \mapsto \widehat f \in L^2(\T^d)$ and its inverse $\mathcal F^{-1}$ 
are isometries, and if $f \in \ell^1(\ZZ^d)$, then
\[
\widehat f (\xi) \coloneqq \sum_{x \in \ZZ^d} f(x) e(x \cdot \xi)
= \sum_{x \in \ZZ^d} f(x) e^{-2\pi i (x_1 \xi_1 + \dots + x_d \xi_d)}, \qquad \xi \in \T^d.
\]

From now on, we 
shall drop the superscript $d$ from certain symbols. In particular, we shall 
write $B_n, \, S_n, \, M_n, \, \mathcal{S}_n, \, \mathcal{M}_n, \, M_{*,E}, \,\mathcal{S}_{*,E}, \, \mathcal{M}_* $ 
in place of $B_n^d, \, S_n^d, \, M_n^d, \, \mathcal{S}_n^d, \, \mathcal{M}_n^d, \, M_{*,E}^d, \, \mathcal{S}_{*,E}^d, \, \mathcal{M}_*^d$, respectively.

\section{Lattice point count -- proof of Theorem \ref{thm:lat:extract}.}
\label{sec: lat}

In this section we prove Theorem~\ref{thm:lat:extract}. The reasoning here follows closely that presented in \cite[Section~3]{NiWr}. 

The formula \eqref{eq: B_n small univ} is expressed in terms of the auxiliary function 
\begin{equation*}
h(z) \coloneqq \sum_{k \in \Z} z^{|k|}=\frac{1+z}{1-z}
\end{equation*}
defined on the open disk $\{z\in \CC : |z|<1\}$. The crucial observation is that
\[
h(z)^d= \Big( \sum_{k \in \Z} z^{|k|} \Big)^d = \sum_{n=0}^{\infty} |S_n \cap \Z^d| z^n
\]
and 
\[
\frac{h(z)^d}{1-z}= \sum_{n=0}^\infty \Big( \sum_{k=0}^n |S_k \cap \Z^d | \Big)z^n= \sum_{n=0}^{\infty} |B_n \cap \Z^d | z^n.
\]
Thus, by Cauchy's integral formula, for any $s\in (0,1)$ we have 
\begin{equation}
\label{eq:Cauchyball}
|S_n \cap \Z^d|= \frac{1}{2 \pi i} \oint_{\Gamma_s} h(z)^d \frac{{\rm d}z}{z^{n+1}}, \quad|B_n \cap \Z^d|= \frac{1}{2 \pi i} \oint_{\Gamma_s} \frac{h(z)^d}{1-z}\frac{{\rm d}z}{z^{n+1}},
\end{equation}
where $\Gamma_s \coloneqq \{ z \in \CC : |z|=s \}$.

Recall that  
\begin{equation}
\label{eq: rdef}
\alpha=\frac{n}{d},\quad \fa=\frac{\sqrt{1+\alpha^2}-1}{\alpha}. 
\end{equation}
In view of \eqref{eq: B change n and d}, the formula \eqref{eq: B_n small} also implies \eqref{eq: B_n large} in Theorem~\ref{thm:lat:extract}. Hence, we may restrict to the case $n\le d$ and, in particular, we have $\alpha\le 1$ and
\begin{equation}
\label{eq: alpha}
\frac{\alpha}{\sqrt{2}+1}\le r\le \frac{\alpha}{2}\le \frac12.
\end{equation}
Furthermore, we note that $r$ behaves like $\alpha/2$ when $\alpha$ is small. 

We move towards the proof of Theorem~\ref{thm:lat:extract} in the case $n \leq d$. The proof is similar to the proof of \cite[Theorem~3.4]{NiWr}. Thus, we shall be brief and give more details only when there are significant differences. As in \cite[Section~3]{NiWr}, we ultimately use the saddle point method. However, in our case a number of simplifications occur and several statements can be made explicit.

\begin{lemma}
    \label{lem:3.5}
    Let $\alpha \in (0,1)$. Then $\fa$ is the unique solution of the equation
\begin{equation}
\label{eq:lem:3.5}
z\frac{h'(z)}{h(z)}=\alpha
\end{equation}
in the open unit disk $\{ z \in \CC : |z|<1 \}$. 
\end{lemma}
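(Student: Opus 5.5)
Since $h(z)=\frac{1+z}{1-z}$, I would compute $h'(z)=\frac{2}{(1-z)^2}$, so that
\[
z\frac{h'(z)}{h(z)}=\frac{2z}{(1-z)(1+z)}=\frac{2z}{1-z^2},
\]
valid on the open unit disk (where $h$ is holomorphic and nonvanishing, since $h(z)=0$ only at $z=-1$). The equation \eqref{eq:lem:3.5} therefore becomes $2z=\alpha(1-z^2)$. Because $\alpha\neq 0$, this is the genuine quadratic $\alpha z^2+2z-\alpha=0$.

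Its roots are
\[
z_\pm=\frac{-1\pm\sqrt{1+\alpha^2}}{\alpha}.
\]
The root $z_+$ is exactly $\fa$ from \eqref{eq: rdef}. It lies in $(0,1)$: it is positive because $\sqrt{1+\alpha^2}>1$, and it is less than $1$ because $\sqrt{1+\alpha^2}<1+\alpha$. The product of the roots is $z_+z_-=-1$, so $|z_-|=1/\fa>1$, and $z_-$ lies outside the unit disk. Hence $\fa$ is the unique solution in $\{|z|<1\}$.

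The only point requiring care is that the equation must be taken as an equality of holomorphic functions on the disk. Clearing denominators introduces no spurious roots, because $1-z^2$ and $h$ are nonzero there. I do not expect a real obstacle.
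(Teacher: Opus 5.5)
Your proposal is correct and follows essentially the same route as the paper: it rewrites $z h'(z)/h(z)$ as $\frac{2z}{1-z^2}$, solves the resulting quadratic $2z=\alpha(1-z^2)$, identifies the root $\fa$ in the disk, and places the other root outside. The only cosmetic difference is that you use the product of the roots, $z_+z_-=-1$, to get $|z_-|>1$, whereas the paper simply observes that $z_1<-1$ directly.
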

\begin{proof}
Since the left-hand side of \eqref{eq:lem:3.5} is equal to $\frac{2z}{1-z^2}$, our task is to solve the quadratic equation
$2z=(1-z^2)\alpha$, which has two real roots
\[
z_1 = \frac{-1 - \sqrt{1+\alpha^{2}}}{\alpha},\quad z_2=\frac{-1 + \sqrt{1+\alpha^{2}}}{\alpha}.
\]
Since $z_1<-1$ and, as in \eqref{eq: alpha}, we have $|z_2| < 1$, the proof is complete. 
\end{proof}

In what follows, we consider the holomorphic function
   \begin{equation}
   \label{eq:fdefi}
    f(z) \coloneqq \log(h(z))- \alpha \log(z)=\log(1+z)-\log(1-z)-\alpha \log(z)
    \end{equation}
    defined on a sufficiently small neighborhood of the set
    \[
    \Gamma_r^{\leq \delta} \coloneqq \big\{z \in \mathbb{C} : |z|=r \text{ and } |\arg(z)| \leq \delta \big\},
    \]
    where $\delta \in \RR_+$ is a small universal quantity to be fixed later. Here we adopt the convention that $\arg(z) \in (-\pi, \pi]$ for all $z \in \mathbb{C} \setminus \{0\}$. Notice that
    \[\exp(df(r))=\frac{h(r)^d}{r^n}\]
    and
    \[
    f'(r)=\frac{h'(r)}{h(r)}- \frac{\alpha}{r}=0.
    \]

     Denote $\beta \coloneqq f^{(2)}(r)$. Simple computations show that
     \[
     f^{(2)}(z)=\frac{4z}{(1-z^2)^2}+\frac{\alpha}{z^2},
     \]
     hence
     \[
     \beta= \frac{4r}{(1-r^2)^2}+\frac{\alpha}{r^2}= \frac{4r}{(2r/\alpha)^2}+\frac{\alpha}{r^2}= \frac{\alpha^2}{r}+\frac{\alpha}{r^2}.
     \]
     In view of \eqref{eq: alpha}, the above gives us 
     \[
     \frac{\alpha}{r^2} \le \beta \le \alpha(\sqrt{2}+1)+ \frac{(\sqrt{2}+1)^2}{\alpha}.
     \]
   Consequently, we obtain
  \begin{equation}
\label{eq: beta}
   \frac{4}{\alpha}\le \beta \le \frac{9}{\alpha}.
   \end{equation}
   By complex Taylor's theorem for any $z \in \Gamma_r^{\leq \delta}$ we have 
    \begin{equation} \label{eq: taylor}
    f(z)=f(r)+ \frac{\beta}{2}(z-r)^2 + \frac{1}{2} \int_{\wideparen{r,z}} (w-z)^2 f^{(3)}(w) \, {\rm d}w,   \end{equation}
    where $\wideparen{r,z} \subseteq \Gamma_r^{\leq \delta}$ denotes the arc from $r$ to $z$. A computation shows that 
    \[
    f^{(3)}(w)= -\frac{2\alpha}{w^3} +\frac{4(1+3w^2)}{(1-w^2)^3}.
    \]
    Thus, by \eqref{eq: alpha}, for every $w \in \Gamma_r^{\leq \delta}$ we have
    \[
    |f^{(3)}(w)|\leq\frac{2\alpha}{r^3} +\frac{4(1+3r^2)}{(1-r^2)^3} \leq \frac{2(\sqrt{2}+1)^3 }{\alpha^2}+ \frac{7}{(3/4)^3} \le \frac{45}{\alpha^2}.
    \]
    If $\delta$ is sufficiently small, then combining the above bound with \eqref{eq: taylor} yields
   \begin{equation}
   \label{eq:ftay}
     |f(z)-f(r)- \frac{\beta}{2}(z-r)^2|\le \frac{45|z-r|^3}{\alpha^2}.
    \end{equation}

We are now ready to prove Theorem~\ref{thm:lat:extract}.  Some steps will involve taking $\delta$ ``small enough'' or $n$ ``large enough'' which means $\delta < c$ or $n > C$ for some universal constants $c,C\in \RR_+$. The number of these steps will be finite and so there will exist universal constants $c,C \in \RR_+$ such that all the statements in the proof hold for $\delta<c$ and $n>C$. Throughout the proof, $c$ with subscripts will denote nonnegative universal constants.
\begin{proof}[Proof of Theorem~\ref{thm:lat:extract}]
It suffices to consider the case $n \leq d$. If $n<C$, then 
    \[
\frac{h(r)^d}{r^n} \frac{1}{\sqrt{n}}\approx_C\left(1+\frac{2r}{1-r}\right)^{d}d^{n}\approx_C d^n
    \]
    in view of \eqref{eq: rdef} and \eqref{eq: alpha}. 
    Moreover, a simple combinatorial argument as in the proof  of \cite[Theorem~3.4]{NiWr} shows that 
    \[|B_n\cap \Z^d| \approx_C d^n\approx_C |S_n\cap \Z^d|.\]
Thus, it remains to consider $n>C.$ We begin by justifying \eqref{eq: B_n small univ}. For this purpose, we estimate $|B_n\cap \Z^d|$ from above and $|S_n\cap \Z^d|$ from below. 

\smallskip \noindent {\bf 1) Estimate from above for $|B_n\cap \Z^d|$.} 
  Recalling \eqref{eq:Cauchyball}, we have
    \begin{equation} \label{eq:3.3}
    |B_n\cap \Z^d|= \frac{1}{2 \pi i } \oint_{\Gamma_r} \frac{h(z)^d}{1-z} \frac{{\rm d}z}{z^{n+1}}
   =\frac{1}{2 \pi i } (W_1+W_2),
    \end{equation}
    where $W_1$ and $W_2$ denote the integrals along $\Gamma_r^{\leq \delta}$ and $\Gamma_r^{> \delta} \coloneqq \Gamma_r \setminus \Gamma_r^{\leq \delta}$, respectively, with some
    small $\delta \in (0,1/2)$ to be determined in the proof.
    
    We first estimate $W_1$. Using \eqref{eq: alpha}, \eqref{eq: beta}, \eqref{eq:ftay}, and the observation that
   \[
   \Rea ((1-e^{i\theta})^2)=1-2\cos \theta +\cos (2\theta)\le -\theta^2/2 
   \]
   holds whenever $|\theta| \leq 1/2$, 
   we obtain
  \begin{align*}
|W_1|&= \bigg| \int_{\Gamma_r^{\leq \delta}} \frac{\exp(df(z))}{1-z} \frac{{\rm d}z}{z} \bigg|
   \\ 
    &\le 
    \frac{h(r)^d}{r^{n}} \int_{-\delta}^{\delta} \exp\Big( \frac{d\beta r^2}{2} \Rea ((1-e^{i\theta})^2) + \frac{45d r^3}{\alpha^2} |1- e^{i\theta}|^3  \Big) \frac{{\rm d} \theta}{|1-re^{i \theta}|}
    \\
    &\lesssim \frac{h(r)^d}{r^{n}}  \int_{-\delta}^{\delta} \exp\Big( -\frac{d\beta r^2}{4}\theta^2+\frac{100d r^3}{\alpha^2} |\theta|^3 \Big)  \, {\rm d} \theta.
   \end{align*}
   Now, by \eqref{eq: alpha} and \eqref{eq: beta}, we see that
   \[
   \frac{d\beta r^2}{4}\approx d\alpha \approx \frac{100dr^3}{\alpha^2}.
   \]
   Taking $\delta$ small enough and substituting $\theta \mapsto x / \sqrt{d \beta r^2}$, we obtain 
    \begin{align*}
    |W_1| &\lesssim  \frac{h(r)^d}{r^{n}}\int_{-\delta}^\delta \exp\Big( - \frac{d\beta r^2}{8} \theta^2 \Big) \, {\rm d} \theta \lesssim \frac{h(r)^d}{r^{n}} \frac{1}{\sqrt{d \beta r^2}} \int_{-\infty}^\infty \exp(-x^2 / 8) \, {\rm d}x.
    \end{align*}
    This yields the desired estimate for $W_1$, since $d \beta r^2 \approx d \alpha = n$ by \eqref{eq: rdef}.  

\par Now we consider $W_2$. It is easy to see that if $z \in \Gamma_r^{> \delta}$, then
\begin{align*}
|h(z)|^2 & \leq |h(re^{i\delta})|^2 = \frac{1+r^2+2r\cos \delta}{1+r^2-2r\cos \delta}\le\frac{(1+r)^2+2r(\cos \delta-1)}{(1-r)^2}\\
&=h(r)^2 \bigg(1+\frac{2r(\cos\delta-1)}{(1+r)^2}\bigg)
\le h(r)^2 \exp \bigg( \frac{2r(\cos\delta-1)}{(1+r)^2} \bigg).
\end{align*}
Since $dr / (1+r)^2 \approx d\alpha=n$ by \eqref{eq: rdef} and \eqref{eq: alpha}, this implies
    \begin{equation}
\label{eq:W2bound}
   |W_2| \lesssim \frac{h(r)^d}{r^{n}} \exp \! \bigg( \! - \frac{dr(1 - \cos\delta)}{(1+r)^2} \bigg) \lesssim \frac{1}{\sqrt{n}}\frac{h(r)^d}{r^{n}}.
\end{equation}
 Recalling \eqref{eq:3.3}, we obtain the desired estimate
\begin{equation}
\label{eq:sB_nabo}
|B_n\cap \Z^d|\lesssim \frac{1}{\sqrt{n}}\frac{h(r)^d}{r^{n}}.
\end{equation}

\smallskip \noindent {\bf 2) Estimate from below for $|S_n\cap \Z^d|$.} As before, we split
\begin{equation*}
    |S_n\cap \Z^d|= \frac{1}{2 \pi i } \oint_{\Gamma_r} h(z)^d \frac{{\rm d}z}{z^{n+1}}
   =\frac{1}{2 \pi i } (V_1+V_2),
    \end{equation*}
    where $V_1$ and $V_2$ denote the integrals along $\Gamma_r^{\leq \delta}$ and $\Gamma_r^{> \delta}$, respectively, with some
    small $\delta \in (0,1/2)$ to be fixed during the proof. Regarding $V_2$, we have
\begin{equation}
\label{eq:V2ab}
|V_2|\lesssim  \exp (-c_3 n)\frac{h(r)^d}{r^{n}}   
\end{equation}
with some $c_3 \in \RR_+$ that depends only on $\delta$ (cf.~\eqref{eq:W2bound}). Regarding $V_1$, set
\[
\rho(\theta) \coloneqq \Rea (f(re^{i\theta})-f(r)),
\quad
\varphi(\theta) \coloneqq \Ima (f(re^{i\theta})-f(r)),
\]
where $f$ is defined in \eqref{eq:fdefi}.
Since $\exp(df(r))=h(r)^d r^{-n}$, we may write
\begin{equation}
\label{eq:ReaV1}
\Rea ( - i V_1) = \Rea \bigg(\int_{\Gamma_r^{\leq \delta}} e^{df(z)} \frac{{\rm d}z}{z}\bigg) = \frac{h(r)^d}{r^{n}} \int_{-\delta}^{\delta} e^{d \rho(\theta)} \cos(d\varphi(\theta)) \,{\rm d}\theta.
\end{equation}
Recalling \eqref{eq:ftay} and noting that $\Ima ((r-re^{i\theta})^2 )=O(r^2\theta^3)$, we obtain
\[
|d\varphi(\theta)|\lesssim d(\beta r^2 \theta^3 +r^3 \theta^3 \alpha^{-2})
\]
and the right-hand side if of order $n \theta^3$ by \eqref{eq: rdef}, \eqref{eq: alpha}, and \eqref{eq: beta}. If $n \geq C$ with $C$ large enough, then we may absorb the implicit constant and get 
\[
|d\varphi(\theta)|\le n^{-1/4}(\sqrt{n}\theta)^{3}
\]
for every $\theta \in [-\delta, \delta]$.
Moreover, if $\delta$ is small enough, then $\theta \in [-\delta, \delta]$ implies
\[
- \beta r^2 \theta^2 
\leq 
 \rho(\theta) \leq - \beta r^2 \theta^2 / 3
\]
by \eqref{eq:ftay} and the fact that $\lim_{\theta \to 0} \theta^{-2} \Rea((1-e^{i\theta})^2)  = -1$. Consequently, 
\[
-3 n \theta^2
\le d \rho(\theta) 
\leq - n\theta^2/9
\]
by \eqref{eq: rdef}, \eqref{eq: alpha}, and \eqref{eq: beta}. Substituting $\theta \mapsto s / \sqrt{n}$ in \eqref{eq:ReaV1}, we obtain
\[
\Rea ( - i V_1) =\frac{1}{\sqrt{n}} \frac{h(r)^d}{r^{n}}\int_{-\delta\sqrt{n}}^{\delta\sqrt{n}} 
e^{d \rho(s / \sqrt{n})} \cos(d\varphi(s / \sqrt{n}) \,{\rm d}s.
\]
We observe that $|d\varphi(s / \sqrt{n})|\le n^{-1/4} s^{3}$ and $-3s^2 \leq d\rho(s/\sqrt{n})\le -s^2/9$ hold if $|s|\le  \delta\sqrt{n}$. Let $C_* \in \RR_+$ be a fixed numerical constant such that
\[
c_* \coloneqq \frac{1}{2} \int_{-C_*}^{C_*} e^{-3s^2} {\rm d}s - \int_{-\infty}^{-C_*} e^{-s^2/9} \, {\rm d}s -
\int_{C_*}^{\infty} e^{-s^2/9} \, {\rm d}s
\]
is positive. Now, if $\delta\sqrt{n}>C_*$ and $n^{-1/4}C_*^3 \le \pi /3$, then necessarily
\begin{align*}
\int_{-\delta\sqrt{n}}^{\delta\sqrt{n}} 
e^{d \rho(s / \sqrt{n})} \cos(d\varphi(s / \sqrt{n}) \,{\rm d}s \geq c_* \gtrsim 1.
\end{align*}
Taking $C$ large enough depending on $\delta$, we conclude that for every $n \geq C$ the above estimate holds. In summary, we have shown that
\[
\Rea ( - i V_1) \gtrsim \frac{1}{\sqrt{n}}\frac{h(r)^d}{r^n} 
\]
holds and, combining this with \eqref{eq:V2ab}, we obtain the desired estimate
\begin{equation}
\label{eq:sS_nbel}
|S_n\cap \Z^d|\gtrsim \frac{1}{\sqrt{n}}\frac{h(r)^d}{r^n}.
\end{equation}
Since \eqref{eq:sB_nabo} and \eqref{eq:sS_nbel} together yield \eqref{eq: B_n small univ}, it remains to establish \eqref{eq: B_n small}.

\smallskip \noindent {\bf 3) Verification of \eqref{eq: B_n small}.} Recalling \eqref{eq: rdef}, we write
    \begin{equation}
    \label{eq:thm:3.4expformproof}
    \begin{split}
    \frac{h(r)^d}{r^n}
    &= \exp\big(d (\log(1+r)-\log(1-r)) - n\log(r) \big)
  \\
    &= (2/\alpha)^n \exp \big(n \alpha^{-1} (\log(1+r)-\log(1-r)) - n \log(2r/\alpha) \big)
    \end{split}
\end{equation}
and note that the map $\alpha \mapsto r = \frac{\sqrt{1+\alpha^2}-1}{\alpha}$ admits a holomorphic extension to the disk $\{z \in \CC : |z| < 1 \}$, which we still denote by $r$. Furthermore, 
\[
\sqrt{1+z^{2}}
= 1+ \sum_{k=1}^{\infty} \frac{\tfrac12 \big(\tfrac12-1\big)\cdots \big(\tfrac12-k+1\big)}{k!} \,z^{2k}
\]
holds inside the disk by the generalized binomial theorem. In particular, if $|z| \leq 3/4$, then Stirling's formula implies $|r(z)|\le 3/4$. Consequently, the map $\alpha \mapsto \alpha^{-1} (\log(1+r(\alpha))-\log(1-r(\alpha)))$ admits a holomorphic extension to the disk $\{ z \in \CC : |z| \le 3/4 \}$. Using the expansion above, we obtain
\[\frac{2r(z)}{z}=1+2\sum_{k=2}^{\infty} \frac{\tfrac12 \big(\tfrac12-1\big)\cdots \big(\tfrac12-k+1\big)}{k!}\, z^{2k-2}.
\]
Hence, if $|z|\le 2/3$, then we have 
\begin{align*}
\Big|\frac{2r(z)}{z}-1\Big|&\le \sum_{k=2}^{\infty} \bigg(\frac23\bigg)^{2k-2} = \frac45 <1,
\end{align*}
so that $\alpha \mapsto \log(2r(\alpha)/\alpha)$ also admits a holomorphic extension to the disk $\{z\in \CC : |z|<2/3\}.$ Summarizing all these observations, we conclude that 
\[
   \alpha^{-1} (\log(1+r)-\log(1-r)) - \log(2r/\alpha)=\sum_{k=0}^{\infty} b_k \alpha^{2k}
\]
for some coefficients $b_k$, with the series on the right-hand side above being absolutely convergent if $|\alpha| \le 1/2$ or, consequently, if $n\le d/2$. Finally, a straightforward but tedious calculation shows that 
\[
b_0=1,\quad b_1=\frac{1}{12}, \quad b_2=-\frac{3}{160}
\]
and, returning to \eqref{eq:thm:3.4expformproof}, 
we conclude that \eqref{eq: B_n small} is valid. 
\end{proof}

Using Theorem~\ref{thm:lat:extract}, we can easily derive Corollaries~\ref{cor:lat:compa small}~and~\ref{cor:lat:compa large}.
\begin{proof}[Proof of  Corollary~\ref{cor:lat:compa small}]
By \eqref{eq: B_n small} for $d,n \in \N$ satisfying $n \leq d/2$ we have
\begin{equation}
\label{eq: Bn Zd approx small}
|B_n\cap \Z^d|\approx |S_n\cap \Z^d| \approx (2e/\alpha)^n \frac{1}{\sqrt{n}
} \exp \Big(\frac{n \alpha^2}{12}+ O(n \alpha^{4}) \Big).
\end{equation}
Using Stirling's formula, we see that
\[
\binom{d}{n}= \frac{d(d-1) \cdots (d-n+1)}{n!} \approx (e/\alpha)^{n} \frac{1}{\sqrt{n}} \prod_{k=1}^{n-1} \Big(1- \frac{k}{d} \Big).
\]
We rewrite the logarithm of the product above as
\begin{align*}
\sum_{k=1}^{n-1} \log \Big(1- \frac{k}{d} \Big)
= - \sum_{k=1}^{n-1} \Big(\frac{k}{d} + \frac{k^2}{2d^2} \Big) + O \Big(\frac{n^4}{d^3} \Big)
= -\frac{n \alpha}{2} -\frac{n \alpha^2}{6}+ O(n \alpha^3+1)
\end{align*}
and, consequently, we obtain
\[
2^n \binom{d}{n}\approx  (2e/\alpha)^{n} 
\frac{1}{\sqrt{n}} \exp \Big( -\frac{n \alpha}{2} -\frac{n \alpha^2}{6}+ O(n \alpha^3) \Big). 
\]
Combining this estimate with \eqref{eq: Bn Zd approx small} completes the proof.
\end{proof}

\begin{proof}[Proof of Corollary~\ref{cor:lat:compa large}]
By \eqref{eq: B_n large} for $d,n \in \N$ satisfying $n \geq 2d$ we have
\[
|B_n\cap\Z^d|\approx (2e \alpha)^d \frac{1}{\sqrt{d}
}\exp \Big(  \frac{d}{12\alpha^2} + O(d/\alpha^{3}) \Big).
\]
Using Stirling's formula, we see that
\[
\Vol(B_n)= \frac{(2n)^d}{d!} \approx (2e\alpha)^d \frac{1}{\sqrt{d}}.
\]
Combining these two estimates completes the proof.
\end{proof}

\section{Full range of large scales -- Proof of Theorem~\ref{T1}}
\label{sec: full large}

Fix $c \in \RR_+$ and $p \in (1, \infty)$ as in the statement of Theorem~\ref{T1}. We take a large constant $C \in \RR_+$ to be specified later and depending only on $c$. For every fixed $d \in \NN$ the operator $M_{*,[cd^{3/2},\infty)}$ is bounded on $\ell^p(\ZZ^d)$, hence we can assume that $d \geq d_0$ for some large $d_0 \in \NN$ depending on $c, \, C, \, p$. Finally, we can replace $M_{*,[cd^{3/2},\infty)}$ by $M_{*,[cd^{3/2},\infty) \cap \ZZ}$.

From now on, we deal with integers $d \geq d_0$ and $n \geq cd^{3/2}$. We partition
\[
B_n \cap \ZZ^d = \bigcup_{l=0}^{d - \lfloor C d^{1/2} \rfloor} B_n^{l},
\]
where if $l > 0$, then $B_n^{l}$ consists of all $x = (x_1, \dots, x_d) \in B_n \cap \ZZ^d$ such that exactly $\lfloor C d^{1/2} \rfloor + l$ of the numbers $x_1, \dots, x_d$ are equal to $0$, while for each $x \in B_n^{0}$ at most $\lfloor C d^{1/2} \rfloor$ numbers are equal to $0$.

Let $Q \coloneqq [-\frac{1}{2}, \frac{1}{2}]$. The following lemma will be helpful.

\begin{lemma} \label{L1}
	There exists a constant $\delta = \delta(c,C) \in \RR_+$ such that the following is true. If
	$
	|x_1| + \dots + |x_d| \leq n
	$
	holds for some $x \in (B_n^{0} \cup B_n^{1}) \cap \ZZ^d$ with integers $d \geq d_0$ and $n \geq c d^{3/2}$,
	then
	\[
	\int_{Q^d} \int_{Q^d} \ind{B_n} (x+u+v) \, {\rm d}u {\rm d}v \geq \delta.
	\]
\end{lemma}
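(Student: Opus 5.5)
We read the integral as the probability that $|x_1+u_1+v_1|+\dots+|x_d+u_d+v_d|\le n$, where $u,v$ are independent and uniform on $Q^d$. Put $w_j \coloneqq u_j+v_j$. These are i.i.d. with the triangular density on $[-1,1]$, they are symmetric, and $\operatorname{Var}(w_j)=1/6$. Set $Y\coloneqq\sum_{j}\big(|x_j+w_j|-|x_j|\big)$, so that the quantity to bound from below is $\mathbb{P}\big(Y\le n-\|x\|_1\big)$. Since $n-\|x\|_1\ge0$, it suffices to show $\mathbb{P}(Y\le 0)\ge\delta$.

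First we compute the distribution of each summand $Y_j \coloneqq |x_j+w_j|-|x_j|$. If $x_j\neq0$, then $|x_j|\ge1\ge|w_j|$, so $Y_j=\sgn(x_j)w_j$. This is symmetric with mean $0$ and variance $1/6$. If $x_j=0$, then $Y_j=|w_j|$, which has mean $\mathbb{E}|w_j|=1/3$. For $x\in B_n^0\cup B_n^1$, the number $k$ of zero coordinates satisfies $k\le \lfloor Cd^{1/2}\rfloor+1$. The vector $x$ is integral, so there are no boundary effects at all. Hence
\[
Y \overset{d}{=} \sum_{j=1}^{d-k} \varepsilon_j + \sum_{j=1}^{k}|w'_j|,
\]
where all the variables are independent and the $\varepsilon_j$ have the law of $w_1$.

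Next we bound the two parts separately. The second sum $Z \coloneqq \sum_{j\le k}|w'_j|$ satisfies $0\le Z$ and $\mathbb{E}Z=k/3\le (C d^{1/2}+1)/3$. By Markov's inequality, $\mathbb{P}(Z\le Cd^{1/2})\ge 1/2$ once $d\ge d_0$. For the first sum, $k\le Cd^{1/2}+1\le d/2$ when $d_0$ is large, so $d-k\ge d/2$. By the central limit theorem (Berry--Esseen, uniformly in $d-k$, since the summands are bounded), we get
\[
\mathbb{P}\Big(\sum_{j\le d-k}\varepsilon_j\le -Cd^{1/2}\Big)\ge \mathbb{P}\big(G\le -\sqrt{12}\,C\big)-O(d^{-1/2}),
\]
where $G$ is a standard Gaussian. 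The right-hand side is at least $\tfrac12\Phi(-\sqrt{12}C)$ for $d\ge d_0$. By independence,
\[
\mathbb{P}(Y\le 0)\ge \tfrac14\,\Phi(-\sqrt{12}C)\eqqcolon\delta .
\]

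The only delicate point is to make the CLT step uniform. Berry--Esseen with bounded third moments gives an error $O((d-k)^{-1/2})$, and this is absorbed by choosing $d_0$ depending on $C$. The resulting $\delta$ depends only on $C$, which is in turn chosen depending on $c$. The hypothesis $n\ge cd^{3/2}$ is not actually needed in this argument beyond making sense of the setup; it will matter elsewhere, when bounding the contribution of $B_n^l$ for large $l$.
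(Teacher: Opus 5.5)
Your proof is correct and follows essentially the same route as the paper: on the nonzero coordinates $|x_j+u_j+v_j|-|x_j|=\sgn(x_j)(u_j+v_j)$, and a CLT that is uniform over at least $d/2$ summands pushes their sum below roughly $-Cd^{1/2}$ with probability bounded below in terms of $\Phi(-\sqrt{12}\,C)$. The only difference is that the paper bounds the at most $\lfloor Cd^{1/2}\rfloor+1$ zero coordinates deterministically via $|u_i+v_i|\le 1$, which makes your Markov step (and its extra factor $\tfrac12$) unnecessary, though harmless.
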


\begin{proof}
	For $x$ as above write $d = d' + d''$, where $d''$ counts how many of the numbers $x_1, \dots, x_d$ are equal to $0$. Of course, $d'' \leq \lfloor C d^{1/2} \rfloor + 1 \leq d/2$ by $d \geq d_0$ for $d_0$ large enough. By symmetry we can assume that $x_1, \dots, x_{d'}$ are positive and $x_{d'+1}, \dots, x_d$ are equal to $0$. By the central limit theorem, if $d_0$ is large enough, then for some constant $\delta = \delta(c,C) \in \RR_+$ we have
	\begin{equation} \label{L1-eq}
	    \mathbb P\big(U_1 + V_1 + \dots + U_{d_*} + V_{d_*} \leq - \lfloor \sqrt{2 C^2 d_*} \rfloor - 1\big) \geq \delta
	\end{equation}
	for all $d_* \in \NN$ satisfying $d_* \geq d_0 /2$, where $U_1, V_1, \dots, U_{d_*}, V_{d_*}$ are independent identically distributed random variables with density $\ind{[-1/2, 1/2]}$. Since $|x_i + u_i + v_i| = x_i + u_i + v_i$ for $i \in [d']$ 
    and $u_i,v_i \in Q$, we obtain
	\begin{equation} \label{E1}
	\int_{Q^{d'}} \int_{Q^{d'}} \ind{|x_1+u'_1+v'_1| + \dots + |x_{d'} + u'_{d'} + v'_{d'}| \leq x_1 + \dots + x_{d'} - d''} \, {\rm d}u' {\rm d}v' \geq \delta
	\end{equation}
	in view of $d' \geq d/2 \geq d_0/2$ and $\lfloor \sqrt{2C^2d'} \rfloor + 1 \geq \lfloor C d^{1/2} \rfloor + 1 \geq d''$. Using $|x_i + u_i + v_i| \leq 1$ for $i \in [d] \setminus [d']$ completes the proof. 
\end{proof}

For every $l \in \{0, 1, \dots, d - \lfloor C d^{1/2} \rfloor\}$ define an operator $M_n^{l}$ by setting
\[
M_n^{l} f(x) \coloneqq \frac{1}{|B_n \cap \ZZ^d|} \sum_{y \in B_n^{l}} f(x-y), \qquad x \in \ZZ^d,
\]
for all $f \colon \ZZ^d \to \RR$. We also consider the associated maximal function
\[
M_{*,[cd^{3/2},\infty) \cap \ZZ}^{l} f(x) \coloneqq
\sup_{n \in [cd^{3/2},\infty) \cap \ZZ} |M_n^{l} f(x)|, \qquad x \in \ZZ^d.
\] 
 
The following estimate holds for $l \in \{0,1\}$.

\begin{lemma} \label{L2}
	Let $l \in \{0,1\}$ and $p \in (1, \infty)$. Consider $\delta$ from Lemma~\ref{L1} and $\mathcal{C}(p)$ from \eqref{E0}. Then for an integer $d \geq d_0$ we have 
	\[
	\| M_{*,[cd^{3/2},\infty) \cap \ZZ}^{l} f \|_{\ell^p(\ZZ^d)} \lesssim \delta^{-1} \mathcal{C}(p) \| f \|_{\ell^p(\ZZ^d)}, \qquad f \in \ell^p(\ZZ^d).
	\]
\end{lemma}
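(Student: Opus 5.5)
The plan is a pointwise domination of the discrete operator $M_n^{l}$ by a continuous average over the cross-polytope, followed by a transference of the continuous bound \eqref{E0} to $\ZZ^d$.

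\textbf{Extending $f$.} We may assume $f \geq 0$. Set
\[
F(z) \coloneqq \sum_{y \in \ZZ^d} f(y) \ind{y+Q^d}(z), \qquad z \in \RR^d,
\]
so that $\|F\|_{L^p(\RR^d)} = \|f\|_{\ell^p(\ZZ^d)}$.

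\textbf{Pointwise domination.} Fix $x \in \ZZ^d$ and $y \in B_n^{l}$ with $l \in \{0,1\}$. By Lemma~\ref{L1},
\[
\int_{Q^d}\int_{Q^d} \ind{B_n}(y+u+v)\,{\rm d}u\,{\rm d}v \geq \delta .
\]
Multiplying by $f(x-y)$, noting $F(x-y-u)=f(x-y)$ for $u \in Q^d$, and summing over $y \in B_n^{l}$ gives
\[
\delta \sum_{y\in B_n^l} f(x-y) \leq \int_{Q^d}\int_{Q^d} \sum_{y\in B_n^l} F(x+v-(y+u+v))\,\ind{B_n}(y+u+v)\,{\rm d}u\,{\rm d}v .
\]
The sets $y+u+Q^d$ are essentially disjoint as $y$ varies, so the inner sum together with the $u$-integral is at most
\[
\int_{B_n} F(x+v-w)\,{\rm d}w = \Vol(B_n)\,\mathcal M_n F(x+v).
\]
Hence
\[
M_n^l f(x) \leq \delta^{-1}\,\frac{\Vol(B_n)}{|B_n\cap\ZZ^d|}\int_{Q^d} \mathcal M_n F(x+v)\,{\rm d}v .
\]
By Remark~\ref{rem: Ehrart}, that is \eqref{eq: compa dis con sharp}, the ratio $\Vol(B_n)/|B_n\cap\ZZ^d|$ is at most $1$ for all $n$; Corollary~\ref{cor:lat:compa large} would also give this up to a constant when $n\gtrsim d^{3/2}$.

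\textbf{Taking the supremum and the norm.} Taking the supremum over $n$ inside the integral gives
\[
M^l_{*,[cd^{3/2},\infty)\cap\ZZ} f(x) \leq \delta^{-1}\int_{Q^d}\mathcal M_*F(x+v)\,{\rm d}v .
\]
Since $|Q^d| = 1$, Jensen's (or H\"older's) inequality gives
\[
\Big(\int_{Q^d}\mathcal M_*F(x+v)\,{\rm d}v\Big)^p \leq \int_{Q^d} (\mathcal M_*F(x+v))^p\,{\rm d}v .
\]
Summing over $x\in\ZZ^d$ turns the right-hand side into $\|\mathcal M_*F\|_{L^p(\RR^d)}^p$, because the cubes $x+Q^d$ tile $\RR^d$. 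Applying \eqref{E0} then yields the bound $\delta^{-1}\mathcal C(p)\|f\|_{\ell^p(\ZZ^d)}$.

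\textbf{Main obstacle.} The delicate step is the change of variables in the pointwise domination. One has to check that substituting $w=y+u+v$ and summing over $y$ does not overcount, i.e.\ that the translates $y+u+Q^d$ have disjoint interiors. One also has to use the indicator $\ind{B_n}(y+u+v)$ to confine $w$ to $B_n$. The second convolution variable $v$ exists precisely to absorb the shift between the discrete point $x$ and a generic point $x+v$ of its unit cube.
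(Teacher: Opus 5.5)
Your proof is correct and follows the paper's argument: the cube extension $F$, pointwise domination of $M_n^l f(x)$ by $\delta^{-1}\int_{Q^d}\mathcal M_n F(x+v)\,{\rm d}v$ via Lemma~\ref{L1}, the supremum, H\"older/Jensen, and \eqref{E0}. You spell out the change of variables $w=y+u+v$, which the paper leaves implicit; one small slip there is that for fixed $v$ the relevant translates are $y+v+Q^d$, not $y+u+Q^d$. The only other difference is that you bound $\Vol(B_n)/|B_n\cap\ZZ^d|$ by the Ehrhart inequality \eqref{eq: compa dis con sharp}, which holds for all $n$ but rests on Liu's external positivity result, whereas the paper uses Corollary~\ref{cor:lat:compa large}, which suffices because $n\ge cd^{3/2}$.
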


\begin{proof}
	Fix $l, p, d$ as above and $f \in \ell^p(\ZZ^d)$. We define $F \colon \RR^d \to [0,\infty)$ by
	\[
	F(x) \coloneqq \sum_{y \in \ZZ^d} f(y) \ind{Q^d}(x-y).
	\]
	Note that $\| F \|_{L^p(\RR^d)} = \| f \|_{\ell^p(\ZZ^d)}$. Moreover, if $x \in \ZZ^d$, then we have
	\[
	M_n^{l} f(x) \lesssim \delta^{-1} \int_{Q^d} \mathcal M_n F(x+u) \, {\rm d}u
	\]
	for all integers $n \geq cd^{3/2}$ by Lemma~\ref{L1} and the bound $\Vol(B_n)\lesssim |B_n \cap \ZZ^d|$ from Corollary~\ref{cor:lat:compa large}. Taking the supremum over all such $n$, we obtain
	\[
	M_{*,[cd^{3/2}, \infty) \cap \ZZ}^{l} f(x) 
	\lesssim \delta^{-1} \Big( \int_{Q^d} \big( \mathcal M_{*} F(x+u) \big)^{p} \, {\rm d}u \Big)^{1/p}
	\]
    by H\"older's inequality.
	We complete the proof by raising the above expression to the $p$-th power, summing over $x \in \ZZ^d$, and using \eqref{E0}.
\end{proof}

It remains to deal with $2 \leq l \leq d - \lfloor C d^{1/2} \rfloor$.
\begin{lemma} \label{L3}
	If $C \geq 2 c^{-1}$, then 
	for integers $d \geq d_0$ and $n \geq c d^{3/2}$ we have
	\[
	|B^{1}_n \cap \ZZ^d| \geq 2 |B^{2}_n \cap \ZZ^d| 
	\geq 2^2 |B^{3}_n \cap \ZZ^d|
	\geq \cdots \geq 2^{d - \lfloor C d^{1/2} \rfloor - 1} |B^{d - \lfloor C d^{1/2} \rfloor}_n \cap \ZZ^d|. 
	\]
    In particular, for $l \in [d - \lfloor C d^{1/2} \rfloor]$ we have
    \begin{equation} \label{C1}
    \| M_{*,[cd^{3/2},\infty) \cap \ZZ}^{l} f \|_{\ell^\infty(\ZZ^d)} \leq 2^{1-l} \| f \|_{\ell^\infty(\ZZ^d)}, \qquad f \in \ell^\infty(\ZZ^d).   
    \end{equation}
\end{lemma}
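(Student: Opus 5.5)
We compare consecutive layers directly. Write $z_l \coloneqq \lfloor C d^{1/2} \rfloor + l$ for the number of zero coordinates of points in $B_n^l$ when $l \ge 1$. We will show $|B_n^{l}\cap\ZZ^d| \ge 2|B_n^{l+1}\cap\ZZ^d|$ for every $1 \le l < d-\lfloor Cd^{1/2}\rfloor$.

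\textbf{Step 1: exact layer count.} A point of $B_n^l$ is determined by three choices: the set of $z_l$ zero coordinates, the signs of the remaining $k \coloneqq d-z_l$ coordinates, and a vector of $k$ positive integers with sum at most $n$. The number of such positive vectors is $\binom{n}{k}$, since subtracting $1$ from each entry and adding a slack variable reduces this to counting nonnegative solutions. Hence
\[
|B_n^l\cap\ZZ^d| = \binom{d}{k}2^k\binom{n}{k}, \qquad k=d-z_l.
\]
Passing from $l$ to $l+1$ replaces $k$ by $k-1$. The ratio is therefore
\[
\frac{|B_n^l\cap\ZZ^d|}{|B_n^{l+1}\cap\ZZ^d|}=\frac{\binom{d}{k}\binom{n}{k}2^k}{\binom{d}{k-1}\binom{n}{k-1}2^{k-1}} = 2\,\frac{(d-k+1)(n-k+1)}{k^2}.
\]

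\textbf{Step 2: bound the ratio.} We have $d-k+1 = z_l+1 \ge \lfloor Cd^{1/2}\rfloor + 2 \ge C d^{1/2}$, and $k\le d$. Since $n\ge cd^{3/2}$ and $d\ge d_0$ is large, $n-k+1 \ge cd^{3/2}-d \ge \tfrac{c}{2}d^{3/2}$ for $d_0$ large depending on $c$. So the ratio is at least
\[
2\cdot \frac{Cd^{1/2}\cdot \tfrac c2 d^{3/2}}{d^2} = Cc \ge 2
\]
when $C\ge 2c^{-1}$. This gives each inequality in the chain. This step is the only place where the hypothesis on $C$ is used. It is also the main obstacle, in the sense that one must track that the floor and the $-d$ losses are absorbed by taking $d_0$ large. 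Iterating yields $|B_n^l\cap\ZZ^d| \le 2^{1-l}|B_n^1\cap\ZZ^d|$.

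\textbf{Step 3: the $\ell^\infty$ bound.} For each $n$ and $l \ge 1$,
\[
|M_n^l f(x)| \le \|f\|_{\ell^\infty}\frac{|B_n^l|}{|B_n\cap\ZZ^d|} \le 2^{1-l}\|f\|_{\ell^\infty}\frac{|B_n^1|}{|B_n\cap \ZZ^d|} \le 2^{1-l}\|f\|_{\ell^\infty},
\]
since $B_n^1\subseteq B_n\cap\ZZ^d$. Taking the supremum over $n\ge cd^{3/2}$ gives \eqref{C1}.
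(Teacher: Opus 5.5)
Your proof is correct and is essentially the paper's argument. You use the same exact layer count $\binom{d}{k}2^k\binom{n}{k}$ with $k=d-\lfloor Cd^{1/2}\rfloor-l$, and the same consecutive-layer ratio (you wrote its reciprocal, $2(l^*+1)(n-d+l^*+1)/(d-l^*)^2$ with $l^*=\lfloor Cd^{1/2}\rfloor+l$), bounded using $l^*+1\ge Cd^{1/2}$ and $n-d\ge \frac{c}{2}d^{3/2}$ for $d_0$ large. You also spell out the deduction of the $\ell^\infty$ bound from $|B_n^l\cap\ZZ^d|\le 2^{1-l}|B_n^1\cap\ZZ^d|\le 2^{1-l}|B_n\cap\ZZ^d|$, which the paper leaves implicit.
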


\begin{proof}
 Fix $l \in [d - \lfloor C d^{1/2} \rfloor - 1]$ and set $l^* \coloneqq \lfloor C d^{1/2} \rfloor + l$. Then
 \[
 \frac{|B^{l+1}_n \cap \ZZ^d|}{|B^{l}_n \cap \ZZ^d|}
 = 
 \frac{\binom{d}{l^*+1} \binom{n}{d-l^*-1} 2^{d-l^*-1}}{\binom{d}{l^*} \binom{n}{d-l^*} 2^{d-l^*}}
 =
 \frac{(d-l^*)(d-l^*)}{2(l^*+1)(n-d+l^*+1)}
 \leq \frac{1}{2},
 \]
 since we have $l^*+1 \geq Cd^{1/2}$ and $n - d \ge \frac{1}{2}cd^{3/2}$ for $d_0$ large enough.
\end{proof}

We next aim to control the $\ell^{p}(\ZZ^d)$ norms of $M_{*,[cd^{3/2},\infty)}$ for $p \in (1,\infty)$.

\begin{lemma} \label{L4}
	Fix integers $d \geq d_0$, $n \geq c d^{3/2}$, and $l \in [d - \lfloor C d^{1/2} \rfloor]$. Consider $\delta$ from \eqref{L1-eq}. If
	$
	|x_1| + \dots + |x_d| \leq n
	$
	holds for some $x \in B_{n}^{l} \cap \ZZ^d$,
	then
	\[
	\int_{Q^d} \int_{Q^d} \ind{B_{n+2(l-1)}} (x+u+v) \, {\rm d}u {\rm d}v \geq \delta.
	\]
\end{lemma}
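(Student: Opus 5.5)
We follow the proof of Lemma~\ref{L1}, now allowing the radius to grow so that the extra zero coordinates can be paid for. Fix $x \in B_n^{l}$ with $|x_1|+\dots+|x_d| \le n$, and write $d = d' + d''$, where $d'' = \lfloor C d^{1/2}\rfloor + l$ is the number of vanishing coordinates. By symmetry we may assume that $x_1,\dots,x_{d'} > 0$ and $x_{d'+1}=\dots=x_d=0$.

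On the zero coordinates we use the trivial bound $|u_i+v_i| \le 1$ for $u_i,v_i \in Q$. Their total contribution is therefore at most $d'' = \lfloor Cd^{1/2}\rfloor + l$. The enlarged radius $n+2(l-1)$ absorbs $2(l-1) \ge l-1$ of this, leaving an excess of at most $\lfloor C d^{1/2}\rfloor + 1$. This is exactly the excess handled in Lemma~\ref{L1}.

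On the nonzero coordinates we have $|x_i+u_i+v_i| = x_i+u_i+v_i$, since $x_i \ge 1$. Hence it suffices that
\[
\sum_{i=1}^{d'} (u_i+v_i) \le -\lfloor Cd^{1/2}\rfloor - 1,
\]
which gives
\[
\sum_{i=1}^d |x_i+u_i+v_i| \le n - \lfloor Cd^{1/2}\rfloor - 1 + \lfloor Cd^{1/2}\rfloor + l \le n + 2(l-1),
\]
where the last step uses $l \ge 1$. This is the same inequality as in Lemma~\ref{L1}.

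The main obstacle is that $d'$ may now be small, or even zero, when $l$ is large, so \eqref{L1-eq} cannot be applied directly with $d_* = d'$. We handle this by splitting into two cases.

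\emph{Case $d' \ge d_0/2$.} We invoke \eqref{L1-eq} with $d_* = d'$. This requires $\lfloor\sqrt{2C^2 d'}\rfloor + 1 \ge \lfloor Cd^{1/2}\rfloor + 1$, which would need $d' \ge d/2$; that may fail here. The fix is to spend more of the slack $2(l-1)$. Half of it, $l-1$, was already used above, and the remaining $l-1$ is still available. Since $d' = d - \lfloor Cd^{1/2}\rfloor - l$, we have $d' < d/2$ only when $l \gtrsim d$. In that regime the remaining $l-1$ exceeds $Cd^{1/2}+1$ for $d \ge d_0$, so the whole constraint holds deterministically.

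\emph{Case of small $d'$.} Here the same observation applies: $l$ is comparable to $d$, and the slack $2(l-1)$ alone dominates $d'' \le d$ once $d_0$ is large. The integrand is then identically $1$, so the integral equals $1 \ge \delta$.

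Thus the only genuine work is the case bookkeeping: checking that either $d' \ge d/2$, so \eqref{L1-eq} applies verbatim, or $l$ is large enough that the constraint holds deterministically. In both cases we choose $d_0$ depending on $C$.
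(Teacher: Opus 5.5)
Your reduction to $\sum_{i\le d'}(u_i+v_i)\le -\lfloor Cd^{1/2}\rfloor-1$ is fine when $d'\ge d/2$. The gap is in the regime $d'<d/2$, in both of your cases. Spending the remaining slack $l-1$ turns the requirement into
\[
\sum_{i\le d'}(u_i+v_i)\le T, \qquad T\coloneqq l-2-\lfloor Cd^{1/2}\rfloor ,
\]
and for large $l$ you get $T\ge 0$. That does not make the condition hold deterministically, because $\sum_{i\le d'}(u_i+v_i)$ can be as large as $d'$, which is close to $d/2$ here. Deterministic validity needs $2(l-1)\ge d$, i.e.\ $l\ge d/2+1$, whereas $d'<d/2$ only gives $l>d/2-\lfloor Cd^{1/2}\rfloor$.

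Concretely, take $d'=\lceil d/2\rceil-1$, $x_1=n-d'+1$, $x_2=\dots=x_{d'}=1$, and $u=v$ with all coordinates close to $1/2$. Then $\sum_i|x_i+u_i+v_i|$ is close to $n+d$, while $n+2(l-1)\le n+d-2\lfloor Cd^{1/2}\rfloor$. So the integrand vanishes on a set of positive measure. With $d=d_0$ the same configuration falls into your ``small $d'$'' case, so ``the integrand is identically $1$'' is false there as well: $d'<d_0/2$ does not force $2(l-1)\ge d$ when $d$ is comparable to $d_0$.

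The missing step is probabilistic, not deterministic, and it repairs your route as follows.
\begin{itemize}
\item If $d'\ge d_0/2$: the event in \eqref{L1-eq} with $d_*=d'$ is contained in $\{\sum_{i\le d'}(u_i+v_i)\le T\}$ whenever $\lfloor\sqrt{2C^2d'}\rfloor+l-1\ge\lfloor Cd^{1/2}\rfloor$. This holds if $d'\ge d/2$. It also holds if $d'<d/2$, since then $l-1\ge\lfloor Cd^{1/2}\rfloor$ for $d_0$ large.
\item If $d'<d_0/2$: then $T\ge 0$. Symmetry of $\sum_{i\le d'}(U_i+V_i)$ gives probability at least $1/2$, and $1/2\ge\delta$ because \eqref{L1-eq} bounds the probability that a symmetric variable is at most $-1$.
\end{itemize}

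The paper avoids this case analysis by allocating the slack differently. It moves $l-1$ of the zero coordinates into the random block and uses $|u_i+v_i|\le u_i+v_i+2$ on them; this is exactly where the factor $2$ in $2(l-1)$ is spent. The block then has size $d^*=d'+l-1=d-\lfloor Cd^{1/2}\rfloor-1\ge d/2$, independently of $l$. Hence \eqref{L1-eq} applies verbatim with the same excess $d-d^*=\lfloor Cd^{1/2}\rfloor+1$ as in Lemma~\ref{L1}.
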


\begin{proof}
	The case $l=1$ holds by Lemma~\ref{L1}, hence we assume that $l > 1$. As in Lemma~\ref{L1}, write $d = d' + d''$ with $d'' \coloneqq \lfloor C d^{1/2} \rfloor + l$. By symmetry we can assume that $x_1, \dots, x_{d'}$ are positive and $x_{d'+1}, \dots, x_d$ are equal to $0$. Let $d^* \coloneqq d' + l - 1$. Then $|x_i + u_i + v_i| = x_i + u_i + v_i$ for $i \in [d']$ and $u_i,v_i \in Q$, while $|x_i + u_i + v_i| \leq x_i + u_i + v_i + 2$ for $i \in [d^*] \setminus [d']$. Thus,
	\[
	\int_{Q^{d^*}} \int_{Q^{d^*}} \ind{|x_1+u^*_1+v^*_1| + \dots + |x_{d^*} + u^*_{d^*} + v^*_{d^*}| \leq x_1 + \dots + x_{d^*} - d + d^* + 2(l-1)} \, {\rm d}u^* {\rm d}v^* \geq \delta
	\]
    by \eqref{L1-eq}, since $d^* \geq d/2 \geq d_0/2$ and $\lfloor \sqrt{2C^2d^*} \rfloor + 1 \geq  \lfloor C d^{1/2} \rfloor + 1 \geq d - d^*$ (cf.~\eqref{E1}). 
    Using $|x_i + u_i + v_i| \leq 1$ for $i \in [d] \setminus [d^*]$ completes the proof.
\end{proof}

Now, for each fixed $p \in (1, \infty)$ set $q \coloneqq \frac{1+p}{2}$
and choose a constant $C_p \in (1, \infty)$ such that $C_p^{q/p} 2^{-1+q/p} < 1$.
By interpolation with \eqref{C1}, it remains to prove the following estimate to establish Theorem~\ref{T1}.

\begin{lemma} \label{L5}
	Let $p \in (1,\infty)$. Consider $q, C_p$ as above, $\delta$ from \eqref{L1-eq}, and $\mathcal{C}(q)$ from \eqref{E0}. Then for integers $d \geq d_0$ and $l \in [ \lfloor C d^{1/2} \rfloor]$ we have
	\[
	\| M_{*,[cd^{3/2},\infty) \cap \ZZ}^{l} f \|_{\ell^{q}(\ZZ^d)} \lesssim C_p^{l-1}\delta^{-1} \mathcal{C}(q) \| f \|_{\ell^{q}(\ZZ^d)}, \qquad f \in \ell^{q}(\ZZ^d). 
	\]
\end{lemma}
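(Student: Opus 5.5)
The plan is to run the argument of Lemma~\ref{L2}, replacing Lemma~\ref{L1} by Lemma~\ref{L4}. Fix $l \in [\lfloor C d^{1/2} \rfloor]$, $q$, $f \in \ell^q(\ZZ^d)$, and put $F(x) \coloneqq \sum_{y \in \ZZ^d} f(y)\ind{Q^d}(x-y)$, so that $\|F\|_{L^q(\RR^d)} = \|f\|_{\ell^q(\ZZ^d)}$. We may assume $f \ge 0$. For $x \in \ZZ^d$ and $y \in B_n^l$, Lemma~\ref{L4} (applied to $y$) gives
\[
\delta \le \int_{Q^d}\int_{Q^d} \ind{B_{n+2(l-1)}}(y+u+v)\,{\rm d}u\,{\rm d}v .
\]
Multiply by $f(x-y)$, sum over $y \in B_n^l$, and unfold exactly as in Lemma~\ref{L2}. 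This yields
\[
M_n^l f(x) \le \delta^{-1}\,\frac{\Vol(B_{n+2(l-1)})}{|B_n\cap\ZZ^d|}\int_{Q^d} \mathcal M_{n+2(l-1)}F(x+u)\,{\rm d}u .
\]

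The only new point is the volume ratio. By Corollary~\ref{cor:lat:compa large}, for $n \ge c d^{3/2}$ we have $|B_n\cap\ZZ^d| \gtrsim \Vol(B_n)$. Moreover,
\[
\frac{\Vol(B_{n+2(l-1)})}{\Vol(B_n)} = \Big(1+\frac{2(l-1)}{n}\Big)^d \le \exp\Big(\frac{2(l-1)d}{n}\Big) \le \exp\big(2(l-1)c^{-1}d^{-1/2}\big).
\]
Since $l \le C d^{1/2}$, this is bounded by a constant depending only on $c$ and $C$. However, the statement asks for the factor $C_p^{l-1}$, so we want a bound uniform in $d$ of the form $\le C_p^{l-1}$. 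This holds once $d_0$ is large enough that $2c^{-1}d^{-1/2} \le \log C_p$, which is legitimate because $C_p > 1$ and $d \ge d_0$ may depend on $c$, $C$, and $p$. Either way, the factor is $\lesssim C_p^{l-1}$.

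Next, take the supremum over $n \ge c d^{3/2}$ and bound $\mathcal M_{n+2(l-1)}F \le \mathcal M_* F$. Hölder's inequality on $Q^d$ then gives
\[
M^l_{*,[cd^{3/2},\infty)\cap\ZZ} f(x) \lesssim C_p^{l-1}\delta^{-1}\Big(\int_{Q^d} (\mathcal M_* F(x+u))^q\,{\rm d}u\Big)^{1/q}.
\]
Raising this to the power $q$, summing over $x \in \ZZ^d$, which tiles $\RR^d$ by the cubes $x+Q^d$, and applying \eqref{E0} with exponent $q$ finishes the proof.

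The main step to watch is the volume comparison. It has to be uniform in $d$ and absorbed into $C_p^{l-1}$ through the restriction $l \le C d^{1/2}$ together with $n \ge c d^{3/2}$. Everything else is a verbatim repetition of Lemma~\ref{L2}.
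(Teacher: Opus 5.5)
Your proposal is correct and follows the paper's proof essentially verbatim: Lemma~\ref{L4} replaces Lemma~\ref{L1} in the transference of Lemma~\ref{L2}, you use the bound $\Vol(B_n)\lesssim|B_n\cap\ZZ^d|$ from Corollary~\ref{cor:lat:compa large} and the estimate $\big(1+\tfrac{2(l-1)}{n}\big)^d\le \exp\big(2(l-1)c^{-1}d^{-1/2}\big)\le C_p^{l-1}$ for $d\ge d_0(c,p)$, and you finish with H\"older and \eqref{E0} at exponent $q$. As your own computation shows, this absorption uses only $n\ge cd^{3/2}$ and $d\ge d_0$, not $l\le Cd^{1/2}$, so your closing remark misattributes it slightly; this is also why the bound holds for every $l\in[d-\lfloor Cd^{1/2}\rfloor]$, which is what the interpolation in the proof of Theorem~\ref{T1} actually needs.
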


\begin{proof}
	Fix $d, l$ as above and $f \in \ell^{q}(\ZZ^d)$. We follow the proof of Lemma~\ref{L2}, making only minor changes. If $x \in \ZZ^d$, then we have 
	\[
	M_n^{l} f(x) \lesssim \delta^{-1} \frac{\Vol(B_{n+2(l-1)})}{\Vol(B_n)} \int_{Q^d} \mathcal M_{n+2(l-1)}F(x+u) \, {\rm d}u
	\]
	for all integers $n \geq cd^{3/2}$ by Lemma~\ref{L4} and the bound $\Vol(B_R)\lesssim |B_R \cap \ZZ^d|$ from Corollary~\ref{cor:lat:compa large}.  
	If $d_0$ is large enough, then $d \geq d_0$ and $n \geq cd^{3/2}$ imply
	\[
	\frac{\Vol(B_{n+2(l-1)})}{\Vol(B_n)} = \Big( \frac{n+2(l-1)}{n} \Big)^d \leq \Big(1 + \frac{2(l-1)}{cd^{3/2}} \Big)^d \leq C_p^{l-1}.
	\]
	Hence, taking the supremum over all such $n$, we obtain
	\begin{align*}
	M_{*,[cd^{3/2}, \infty) \cap \ZZ}^{l} f(x)
	\lesssim C_p^{l-1} \delta^{-1} \Big( \int_{Q^d} \big( \mathcal M_{*} F(x+u) \big)^{q} \, {\rm d}u \Big)^{1/q}.
	\end{align*}
    by H\"older's inequality. 
	We complete the proof by raising the above expression to the $q$-th power, summing over $x \in \ZZ^d$, and using \eqref{E0}. 
\end{proof}

\begin{proof}[Proof of Theorem~\ref{T1}]
	We use Lemma~\ref{L2} and interpolate the estimates from Lemmas~\ref{L3}~and~\ref{L5}. 
\end{proof}

We conclude this section with the remark that the exponent $\frac{3}{2}$ is indeed the smallest possible for which our method works. This becomes evident when one attempts to reprove Lemma~\ref{L3} with any smaller exponent. 


\section{Full range of small scales -- proof of Theorem~\ref{T2}}

\label{sec: full small}

We now sketch how to prove Theorem~\ref{T2} using Theorem~\ref{thm:lat:extract} and some methods
from \cite{NiWr}. We begin with two simple consequences of Theorem~\ref{thm:lat:extract}.
\begin{corollary}
    \label{lem:3.8} Let $K \in \N$ and $\delta \in \RR_+$. There exists a constant $L_{K,\delta} \in \RR_+$ such that for all $d,n,m \in \N$ if $(1+\delta)m \leq n \leq d^{\frac{K}{K+1}}$ and $d \ge 2^{K+1}$, then 
    \[
    |B_{n-m}\cap \Z^d| \lesssim_{K,\delta} (L_{K,\delta} \alpha)^m |B_n\cap \Z^d|.
    \]
\end{corollary}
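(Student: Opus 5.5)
Here $\alpha=n/d$. Since $n\le d^{K/(K+1)}\le d/2$ (because $d\ge 2^{K+1}$), both $n$ and $n-m$ lie in the regime of \eqref{eq: B_n small}. Set $n'=n-m$ and $\alpha'=n'/d$; we may assume $n'\ge 1$. The case $n'=0$ is trivial, since then $|B_0\cap\Z^d|=1\le (L\alpha)^m|B_n\cap\Z^d|$ follows from $|B_n\cap\Z^d|\ge 2^n\binom{d}{n}\ge (d/n)^n$ once $L\ge 1$. By \eqref{eq: B_n small},
\[
\frac{|B_{n'}\cap\Z^d|}{|B_n\cap\Z^d|}\approx \frac{(2e/\alpha')^{n'}}{(2e/\alpha)^{n}}\sqrt{\frac{n}{n'}}\exp\big(n'b(\alpha')-nb(\alpha)\big).
\]
I will bound the three factors separately.

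\textbf{Main factor.} Since $\alpha'=\alpha-m/d$,
\[
\frac{(2e/\alpha')^{n'}}{(2e/\alpha)^{n}}=\Big(\frac{\alpha}{2e}\Big)^{m}\Big(\frac{n}{n'}\Big)^{n'}=\Big(\frac{\alpha}{2e}\Big)^{m}\Big(1+\frac{m}{n'}\Big)^{n'}\le \Big(\frac{\alpha}{2e}\Big)^{m}e^{m}=(\alpha/2)^m .
\]
\textbf{Square-root factor.} The hypothesis $(1+\delta)m\le n$ gives $n'\ge \frac{\delta}{1+\delta}n$, so $\sqrt{n/n'}\lesssim_\delta 1$.

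\textbf{Exponential factor.} This is the step where the hypothesis $n\le d^{K/(K+1)}$ and the dependence on $K$ enter, and I expect it to be the main obstacle. The crude bound $|nb(\alpha)|\lesssim n\alpha^2=n^3/d^2$ is not enough, because it can be much larger than $m$. Instead I use the power series $b(z)=\sum_{k\ge1}b_kz^{2k}$, which converges on $|z|\le 1/2$ with $|b_k|\le A\,2^{2k}$. Then
\[
n'b(\alpha')-nb(\alpha)=\sum_{k\ge1} b_k d^{-2k}\big(n'^{2k+1}-n^{2k+1}\big).
\]
Each summand has absolute value at most $|b_k|d^{-2k}(2k+1)\,m\,n^{2k}=(2k+1)|b_k|\,m\,\alpha^{2k}$. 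Since $\alpha\le 1/2$, summing over $k$ gives a total of at most $C m\alpha^2\le Cm/4$, by the convergence of $\sum_k (2k+1)|b_k|\,4^{-k}$. This already works if I shrink the radius slightly: \eqref{eq: B_n small} gives holomorphy up to $2/3$, so the coefficients decay like $(3/2)^{2k}$, and at $\alpha\le 1/2$ the series $\sum_k k|b_k|\alpha^{2k}$ converges.

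Hence the exponential factor is at most $e^{Cm}$, and altogether
\[
|B_{n-m}\cap\Z^d|\lesssim_\delta (e^{C}\alpha/2)^m|B_n\cap\Z^d|,
\]
so $L=e^C/2$ works. If this series bound turns out to be delicate, the fallback is to keep only the terms $k\le K$ and bound the tail using $n\alpha^{2K+2}\le 1$, which holds because $n\le d^{K/(K+1)}$ implies $n^{2K+3}\le d^{2K+2}$. The exact condition the fallback needs is not yet checked. The statement's $\lesssim_{K,\delta}$ allows constants depending on $K$, which covers that route. Along the main route the implied constants depend only on $\delta$, which is stronger than required.
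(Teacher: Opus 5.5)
Your proof is correct. It differs from the paper's, which only works through a finite-order expansion of $b$.

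The paper proves this by reference to \cite[Lemma~3.7]{NiWr}, using the truncation $b(\alpha)=\sum_{k=1}^{\lfloor K/2\rfloor}b_k\alpha^{2k}+O_K(\alpha^{K+1})$. The hypothesis $n\le d^{K/(K+1)}$ is exactly what makes the remainder harmless: $n\cdot O_K(\alpha^{K+1})=O_K(n^{K+2}/d^{K+1})=O_K(1)$. This is where $K$ enters the constants, and it is essentially your fallback. You instead use the full power series of $b$ and its holomorphy on a disk of radius $2/3>1/2$. Applying the mean value theorem term by term, i.e.\ bounding the derivative of $t\mapsto t\,b(t/d)$, you get $|n'b(\alpha')-nb(\alpha)|\lesssim m\alpha^2$ for every $n\le d/2$. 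This gives a stronger statement than the corollary. The constant $L$ is universal, the implicit constant depends only on $\delta$, and $n\le d^{K/(K+1)}$ is only used to ensure $n\le d/2$. The paper's route follows the template of \cite{NiWr}, which needs only a finite-order asymptotic expansion. Yours exploits the explicit holomorphic structure available for cross-polytopes.

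There are two small points of presentation.
\begin{itemize}
\item The bound $|b_k|\le A\,2^{2k}$ alone does not make $\sum_k(2k+1)|b_k|4^{-k}$ converge. What you need, and then invoke, is Cauchy's estimate $|b_k|\le A_\rho\,\rho^{-2k}$ for a fixed $\rho\in(1/2,2/3)$. So the coefficients grow no faster than $\rho^{-2k}$, rather than ``decay like $(3/2)^{2k}$''.
\item The case $n'=0$ cannot occur, since $\delta>0$ and $(1+\delta)m\le n$ force $m<n$.
\end{itemize}
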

\begin{corollary}
\label{cor:3.9}   
Let $K,d,n \in \N$. If $10K \leq n \leq d^{\frac{K}{K+1}}$ and $d \ge 2^{K+1}$, then
\[
\big| \big \{x \in B_n\cap \Z^d : |x_i| \geq 6K \text{ for some } i \in [d] \big \} \big| \lesssim_{K} \frac{1}{d} |B_n\cap \Z^d|.
\]
\end{corollary}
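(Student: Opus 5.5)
We prove Corollary~\ref{cor:3.9} by a union bound over the coordinate $i$ and the value of that coordinate, reducing to lattice point counts in smaller cross-polytopes. Then we control those counts by Corollary~\ref{lem:3.8}.

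Fix $i\in[d]$ and an integer $k$ with $|k|\ge 6K$. The points $x\in B_n\cap\Z^d$ with $x_i=k$ correspond bijectively to points of $B_{n-|k|}^{d-1}\cap\Z^{d-1}$, and $|B^{d-1}_{n-|k|}\cap\Z^{d-1}|\le |B_{n-|k|}\cap\Z^d|$. Hence
\[
\big| \{x \in B_n\cap \Z^d : |x_i| \geq 6K \text{ for some } i \} \big|
\le 2d\sum_{m=6K}^{n} |B_{n-m}\cap\Z^d|.
\]

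It remains to show that $\sum_{m\ge 6K}|B_{n-m}\cap\Z^d|\lesssim_K d^{-2}|B_n\cap\Z^d|$. We split the sum according to the size of $m$.

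\textbf{Range $6K\le m\le n/2$.} Here $(1+\delta)m\le n$ with $\delta=1$, so Corollary~\ref{lem:3.8} gives
\[
|B_{n-m}\cap\Z^d|\lesssim_K (L\alpha)^m|B_n\cap\Z^d|, \qquad L=L_{K,1}.
\]
Since $n\le d^{K/(K+1)}$, we have $\alpha=n/d\le d^{-1/(K+1)}$. The ratio $L\alpha$ may fail to be below $1/2$ for small $d$. However, for $d$ bounded in terms of $K$ the claim is trivial, because then the count is $\lesssim_K 1\cdot|B_n\cap\Z^d|\lesssim_K d^{-1}|B_n\cap\Z^d|$. So we may assume $L\alpha\le 1/2$. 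The geometric sum is then dominated by its first term, giving
\[
\sum_{6K\le m\le n/2}|B_{n-m}\cap\Z^d|\lesssim_K (L\alpha)^{6K}|B_n\cap\Z^d|\lesssim_K d^{-6K/(K+1)}|B_n\cap\Z^d|.
\]
Since $6K/(K+1)\ge 3$ for $K\ge1$, this is at most $d^{-3}|B_n\cap\Z^d|$.

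\textbf{Range $m>n/2$.} By monotonicity in the radius, each term satisfies $|B_{n-m}\cap\Z^d|\le |B_{n-\lceil n/2\rceil}\cap\Z^d|$. Apply Corollary~\ref{lem:3.8} with $m'=\lfloor n/2\rfloor\ge 5K$; the condition $(1+\delta)m'\le n$ holds with $\delta=1$ because $n\ge 10K$. Note $n-m'=\lceil n/2\rceil$, so $|B_{n-\lceil n/2\rceil}\cap\Z^d|\le|B_{n-m'}\cap\Z^d|$. This yields the bound $(L\alpha)^{5K}|B_n\cap\Z^d|$ for each term. There are at most $n\le d$ such terms, so their sum is
\[
\lesssim_K d\cdot d^{-5K/(K+1)}|B_n\cap\Z^d|\le d^{-3/2}|B_n\cap\Z^d|,
\]
using $5K/(K+1)\ge 5/2$.

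Multiplying both ranges by the prefactor $2d$ gives a total of $\lesssim_K d^{-1/2}|B_n\cap\Z^d|$. This is short of the claimed $d^{-1}$, so the exponents need tightening: the sum must be $\lesssim_K d^{-2}|B_n\cap\Z^d|$ before multiplying by $2d$.

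\textbf{Main obstacle: the second range.} The crude count of $d$ terms loses too much. The fix is to avoid counting terms at all. For $m>n/2$, the sets $B_{n-m}$ are nested inside $B_{n-\lceil n/2\rceil}$. Hence the number of points with some $|x_i|\ge n/2$ is bounded by a union over $i$ and the sign of $x_i$, which costs a factor $2d$ but no additional factor $n$. Each such set injects into $B_{\lfloor n/2\rfloor}^{d-1}\cap\Z^{d-1}$, shifting $x_i$ by $\pm\lceil n/2\rceil$.

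The first range needs care as well: there the sum over $m$ is genuinely geometric, so no factor $n$ is lost. The remaining requirement is
\[
(L\alpha)^{5K}\le C_K d^{-2},
\]
which holds since $\alpha^{5K}\le d^{-5K/(K+1)}\le d^{-5/2}$. This gives the claimed bound $\lesssim_K d^{-1}|B_n\cap\Z^d|$.
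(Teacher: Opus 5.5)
Your strategy matches the paper's: deduce the estimate from Corollary~\ref{lem:3.8} via a union bound over the coordinate $i$. The final argument works, but one claim in your repair of the range $m>n/2$ is false as written. The shift $x\mapsto x\mp\lceil n/2\rceil e_i$ does not map $\{x\in B_n\cap\Z^d:\pm x_i\ge n/2\}$ into $B^{d-1}_{\lfloor n/2\rfloor}\cap\Z^{d-1}$. The shifted point keeps a possibly nonzero $i$-th coordinate, and discarding that coordinate is not injective.

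The correct target is $B^{d}_{\lfloor n/2\rfloor}\cap\Z^{d}$, and that is all you need. Indeed, $|B_{\lfloor n/2\rfloor}\cap\Z^d|\le|B_{n-\lfloor n/2\rfloor}\cap\Z^d|\lesssim_K (L\alpha)^{\lfloor n/2\rfloor}|B_n\cap\Z^d|\le (L\alpha)^{5K}|B_n\cap\Z^d|$ is exactly the bound you then use. So the total is $\lesssim_K d\,(d^{-3}+d^{-5/2})|B_n\cap\Z^d|\lesssim d^{-1}|B_n\cap\Z^d|$. In a write-up, drop the abandoned $d^{-1/2}$ estimate rather than presenting it and then correcting it.

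The same shift trick, applied directly at the threshold $6K$, makes the two-range split, the geometric series, and the reduction to $L\alpha\le 1/2$ unnecessary. With $e_i$ the $i$-th unit vector, $x\mapsto x\mp 6K e_i$ injects $\{x\in B_n\cap\Z^d:\pm x_i\ge 6K\}$ into $B_{n-6K}\cap\Z^d$. Hence the set in question has at most $2d\,|B_{n-6K}\cap\Z^d|$ elements. Apply Corollary~\ref{lem:3.8} with $m=6K$ and $\delta=2/3$; this is admissible exactly because $\tfrac53\cdot 6K=10K\le n$. It gives $\lesssim_K d\,\alpha^{6K}|B_n\cap\Z^d|\le d^{1-6K/(K+1)}|B_n\cap\Z^d|\le d^{-2}|B_n\cap\Z^d|$. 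The hypotheses $n\ge 10K$ and $|x_i|\ge 6K$ fit this one-step argument precisely.
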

\noindent The proofs of the corollaries are, \emph{mutatis mutandis}, the same as the proofs of \cite[Lemma~3.7]{NiWr} and \cite[Corollary~3.8]{NiWr} once we note that for each $K\in \N$ and $n\le d^{\frac{K}{K+1}} \le d/2$ we may write
\[
b(\alpha)=\sum_{k=1}^{\lfloor K/2 \rfloor} b_k \alpha^{2k}+O_K(\alpha^{K+1}).
\]

Using Corollaries~\ref{lem:3.8}~and~\ref{cor:3.9}, we can prove the following two results.

\begin{theorem}
    \label{thm:3.1}
    Let $K \in \N$ and $\varepsilon \in (0,1)$. There exists an integer $a_{K,\varepsilon} \in \N$ such that for all $d, n \in \N$ if $n \leq d^{\frac{K-\varepsilon}{K+1}}$, 
    then
    \begin{equation*} 
    \big| \big \{ x \in  B_n \cap \Z^d: \sum_{i \in [d]} |x_i| \, \ind{[-K,K]}(x_i) \leq n-a_{K,\varepsilon} \big \} \big| \lesssim_{K, \varepsilon} \frac{1}{d} |B_n \cap \Z^d|. 
    \end{equation*}
\end{theorem}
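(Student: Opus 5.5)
We split the set in the statement into two pieces. The first consists of points with some large coordinate, $|x_i|\ge 6K$ for some $i$. The second consists of points with all $|x_i|\le 6K$ whose small coordinates, those in $[-K,K]$, carry $\ell^1$ mass at most $n-a$, where $a=a_{K,\varepsilon}$. Before doing so we dispose of small $n$. If $n\le 10K$, or $d<2^{K+1}$, then taking $a_{K,\varepsilon}>10K$ makes the set empty, and small $d$ costs only a $K$-dependent constant. So we may assume $10K\le n\le d^{\frac{K-\varepsilon}{K+1}}\le d^{\frac{K}{K+1}}$ and $d\ge 2^{K+1}$. Under these assumptions Corollary~\ref{cor:3.9} bounds the first piece by $\lesssim_K d^{-1}|B_n\cap\Z^d|$.

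For the second piece, every point has the form $x=y+z$ with disjoint supports. Here $y$ collects the coordinates with $|x_i|\le K$, and $z$ collects those with $K<|x_i|<6K$. By assumption $\|y\|_1\le n-a$. Each coordinate of $z$ contributes at least $K+1$ to $\|x\|_1\le n$. Let $j$ be the number of nonzero entries of $z$ and set $m\coloneqq\|z\|_1$. Then $m\ge j(K+1)$, and $m\ge a$ unless the point lies in $B_{n-a}$; more precisely, the total mass is at most $n$ and $y$ has mass at most $n-a$. We split again according to whether $m\le n/2$ or $m>n/2$.

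\emph{Case A: $\|x\|_1\le n-a$.} These points lie in $B_{n-a}$. Applying Corollary~\ref{lem:3.8} with $m=a$ (valid since $n\ge 10K$ and $a$ is fixed, giving $(1+\delta)a\le n$ once $n$ is large) yields the bound $\lesssim (L\alpha)^{a}|B_n\cap\Z^d|$. Since $\alpha\le d^{-\frac{1+\varepsilon}{K+1}}$, choosing $a\ge (K+1)/(1+\varepsilon)$ gives $\alpha^a\le d^{-1}$.

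\emph{Case B: $\|x\|_1\in(n-a,n]$ and $\|y\|_1\le n-a$.} Then $z\ne0$, and $z$ has $j\ge1$ nonzero coordinates with $|z_i|\in(K,6K)$. We count as follows. Choose the support of $z$ and its values, giving at most $\binom{d}{j}(12K)^j\lesssim_K d^j$ choices. Then $y$ lies in $B_{n-m}$ with $m\ge j(K+1)$, giving $|B_{n-m}\cap\Z^d|\lesssim (L\alpha)^m|B_n\cap\Z^d|$ by Corollary~\ref{lem:3.8}. Since $\alpha<1$ and $m\ge j(K+1)$, each $z$-coordinate contributes a factor $d\,(L\alpha)^{K+1}\lesssim_K d\cdot d^{-(1+\varepsilon)}=d^{-\varepsilon}$. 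This is where the strict exponent $\frac{K-\varepsilon}{K+1}$ is used. Summing over $j$ and over the values gives a geometric series $\lesssim_K\sum_j d^{-\varepsilon j}$. This alone only yields a bound of order $d^{-\varepsilon}$, not $d^{-1}$, so we must also exploit the requirement $\|y\|_1\le n-a$ together with $m\le 6Kj$.

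The remedy is to use the full deficit. We have $|y|\le n-a$ and $|y|\ge n-a-m$ is not the constraint that matters; what matters is that $y\in B_{n-m'}$ with $m'=\max(m,a)$, which is immediate from $\|y\|_1\le\min(n-m,n-a)$. Using the factor $(L\alpha)^{m'}$ and the bound $\alpha^{m'}\le\alpha^{a/2}\alpha^{m/2}$, the $z$-sum becomes $\sum_j d^j(L\alpha)^{j(K+1)/2}$. To make this converge we take $a$ large, and in addition we use Corollary~\ref{cor:3.9} to rule out the regime $j\gtrsim 1$ with large values; alternatively we restrict to $|x_i|\le 6K$ so that $m\le 6Kj$ and use $m'\ge a$ together with $m\ge j(K+1)$.

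The main obstacle is balancing the $d^j$ combinatorial factor against $\alpha^{m'}$. The step I expect to be delicate is showing that $\sum_{j\ge1}\binom{d}{j}(12K)^j(L\alpha)^{\max(j(K+1),a)}\lesssim_{K,\varepsilon} d^{-1}$. For $j\le a/(K+1)$, the terms are at most $d^{j}\alpha^{a}\le d^{a/(K+1)}d^{-a(1+\varepsilon)/(K+1)}=d^{-a\varepsilon/(K+1)}\le d^{-1}$ once $a\ge (K+1)/\varepsilon$. For larger $j$, the terms are at most $(d\alpha^{K+1})^j\le d^{-\varepsilon j}\le d^{-\varepsilon a/(K+1)}\le d^{-1}$, with geometric decay. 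The lemma's hypothesis $(1+\delta)m'\le n$ fails only when $m'>n/2$. In that case we bound $|B_{n-m'}\cap\Z^d|$ crudely, using monotonicity at the level $n/2$: apply Corollary~\ref{lem:3.8} with $m=\lfloor n/2\rfloor\ge a$, and note that $j\le n$ controls the number of terms. Altogether this gives the claimed $d^{-1}$ bound with $a_{K,\varepsilon}\coloneqq\lceil (K+1)/\varepsilon\rceil+10K+1$.
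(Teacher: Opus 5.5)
Your ingredients are the ones the paper has in mind: it omits this proof and defers to the argument of \cite{NiWr}, with Corollaries~\ref{lem:3.8} and~\ref{cor:3.9} as the only new input. Your Case~B summation is also correct as long as $m'=\max(m,a)\le n/2$.

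The gap is the complementary regime $m'>n/2$, where $z$ carries more than half of the mass. There you replace $|B_{n-m'}\cap\Z^d|$ by $|B_{\lceil n/2\rceil}\cap\Z^d|\lesssim(L\alpha)^{\lfloor n/2\rfloor}|B_n\cap\Z^d|$, but you still sum over all admissible $z$. There are about $\binom{d}{j}(12K)^j$ of them, with $j$ as large as $n/(K+1)$, so discarding the factor $\alpha^{m-\lfloor n/2\rfloor}$ is fatal. Concretely, take $K=1$, $n\in\{a,a+1\}$ even, and the points with exactly $n/2$ coordinates equal to $\pm2$ and $y=0$. These lie in the set, since $\|y\|_1=0\le n-a$, and they have $m=n>n/2$. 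Relative to $|B_n\cap\Z^d|$, your bound gives this family the weight
\[
\binom{d}{n/2}2^{n/2}(L\alpha)^{n/2}\ge\Big(\frac{4}{\alpha}\Big)^{n/2}(L\alpha)^{n/2}=(4L)^{n/2},
\]
which for this fixed $n$ does not decay as $d\to\infty$, let alone like $d^{-1}$. Replacing $d^j$ by $\binom{d}{j}\le(ed/j)^j$ does not rescue it: the leftover power is $\alpha^{n/2-j}$, which is $\alpha^0$ at $j=n/2$. The remark that $j\le n$ controls the number of terms does not help, because the problem is the size of each term, not their number. A smaller point: Case~A needs $(1+\delta)a\le n$, so the range $a\le n<(1+\delta)a$ must be handled separately, e.g.\ by the monotonicity trick you use later.

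The way out is never to subtract an unbounded amount of mass. Put $j_0\coloneqq\lceil1/\varepsilon\rceil$.
\begin{enumerate}
\item Suppose $x\in B_n\cap\Z^d$ has at least $j_0$ coordinates with $|x_i|\ge K+1$. Take the first $j_0$ of them, record their signs, and lower each of these $|x_i|$ by $K+1$. This is an injection into ($j_0$-subsets of $[d]$)$\times\{\pm1\}^{j_0}\times(B_{n-j_0(K+1)}\cap\Z^d)$.
\item The deficit $j_0(K+1)$ is now bounded, so Corollary~\ref{lem:3.8} applies once $n\ge2j_0(K+1)$ and gives
\[
\binom{d}{j_0}2^{j_0}|B_{n-j_0(K+1)}\cap\Z^d|\lesssim_{K,\varepsilon}(d\alpha^{K+1})^{j_0}|B_n\cap\Z^d|\le d^{-\varepsilon j_0}|B_n\cap\Z^d|\le d^{-1}|B_n\cap\Z^d|.
\]
\item Remove the points with a coordinate of size at least $6K$ via Corollary~\ref{cor:3.9}. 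In what remains, fewer than $j_0$ coordinates satisfy $K<|x_i|<6K$, so $\|z\|_1<6Kj_0$ and hence $\|x\|_1\le n-a+6Kj_0$.
\item Your Case~A, run with deficit $a-6Kj_0=K+1$, then gives $(L\alpha)^{K+1}\lesssim_K d^{-1-\varepsilon}$.
\end{enumerate}
Take $a\coloneqq6Kj_0+K+1$. Since $j_0\ge2$, this exceeds $10K$, $2j_0(K+1)$ and $2(K+1)$. So the set is empty for $n<a$, and for $n\ge a$ every application of the corollaries is legitimate. This proves the theorem, and the delicate sum over $j$ is not needed at all.
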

\begin{corollary} \label{cor:3.2}
Let $K \in \N$ and $\varepsilon \in (0,1)$. There exists an integer $a_{K,\varepsilon} \in \N$ such that for all $d, n \in \N$ if $n \leq d^{\frac{K-\varepsilon}{K+1}}$, 
then
    \begin{equation*} 
    \big | \big \{ x \in  S_n\cap \Z^d : \sum_{i \in [d]} |x_i| \, \ind{[-K,K]}(x_i) \leq n-a_{K,\varepsilon} \big \} \big | \lesssim_{K, \varepsilon} \frac{1}{d} |S_n\cap \Z^d|. 
    \end{equation*}
\end{corollary}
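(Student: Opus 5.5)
Corollary~\ref{cor:3.2} should follow from Theorem~\ref{thm:3.1} by relating sphere points to ball points. Write
\[
E_n^S \coloneqq \Big\{ x \in S_n\cap\Z^d : \sum_{i\in[d]} |x_i|\,\ind{[-K,K]}(x_i) \le n-a \Big\},
\]
with $a=a_{K,\varepsilon}$ taken from Theorem~\ref{thm:3.1}. Since $E_n^S \subseteq S_n\cap\Z^d\subseteq B_n\cap\Z^d$, and every $x\in E_n^S$ also satisfies the defining inequality of the exceptional set in Theorem~\ref{thm:3.1}, we get
\[
|E_n^S| \lesssim_{K,\varepsilon} \frac1d |B_n\cap\Z^d|.
\]
It therefore suffices to show $|B_n\cap \Z^d| \lesssim |S_n\cap\Z^d|$ uniformly in the relevant range. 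This is exactly the comparison $|B_n\cap\Z^d|\approx|S_n\cap\Z^d|$ from \eqref{eq: B_n small univ} in Theorem~\ref{thm:lat:extract}, which holds for all $n\le d$. Our range $n\le d^{\frac{K-\varepsilon}{K+1}}$ is contained in $n\le d$, since the exponent is less than $1$.

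The one point to check is that the constant $a_{K,\varepsilon}$ and the implicit constants are the same as in Theorem~\ref{thm:3.1}. The universal constant from Theorem~\ref{thm:lat:extract} simply multiplies the bound, so the resulting implicit constant depends only on $K,\varepsilon$.

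If one prefers to avoid relying on Theorem~\ref{thm:3.1} verbatim (for instance because the ball version is proved via Corollaries~\ref{lem:3.8} and~\ref{cor:3.9} applied to shells), the alternative is to repeat the counting argument of Theorem~\ref{thm:3.1} directly on $S_n$. In that approach, points with a large coordinate are controlled by Corollary~\ref{cor:3.9} (intersected with $S_n$). Points whose mass in coordinates of size in $(K,6K)$ is at least $a$ are controlled by removing those coordinates, which lands in $S_{n-m}$ in fewer dimensions. One then bounds by $|B_{n-m}|$ and uses Corollary~\ref{lem:3.8} together with $|S_n|\approx|B_n|$.

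The main obstacle would be this direct route. The simple inclusion-plus-comparison argument above has no real difficulty beyond checking that $n\le d$ holds, so I would present that.
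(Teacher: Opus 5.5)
Your inclusion-plus-comparison argument is correct and is essentially the intended proof. The paper omits it, deferring to \cite[Corollary~3.2]{NiWr}, which deduces the sphere statement from Theorem~\ref{thm:3.1} via $S_n\cap\Z^d\subseteq B_n\cap\Z^d$ and $|B_n\cap\Z^d|\approx|S_n\cap\Z^d|$; here that comparison is supplied by \eqref{eq: B_n small univ}, since $n\le d^{\frac{K-\varepsilon}{K+1}}\le d$, and the fallback direct route you sketch is not needed.
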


\noindent The proofs of Theorem~\ref{thm:3.1} and Corollary~\ref{cor:3.2} follow the lines of the proofs of \cite[Theorem~3.1]{NiWr} and \cite[Corollary~3.2]{NiWr}, respectively, and hence they are also omitted.
We expect both conclusions to fail for $\varepsilon=0$.

The next result is a consequence of Corollary~\ref{cor:lat:compa small} and \cite[Lemma~2.3]{Ni2}.
\begin{corollary} \label{cor:4.2}
 For all $d,n \in \N$ if $n/d=\alpha \le (818)^{-1}$, then
 \[
|\{x \in S_n\cap \Z^d: |\{i \in [d]: x_i= \pm 1\}| \leq n/2\}| \lesssim 2^{-n/2}|S_n\cap \Z^d|.
 \]   
\end{corollary}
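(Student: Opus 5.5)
We will prove the bound directly, by counting the points of $S_n\cap\Z^d$ according to how many of their coordinates equal $\pm1$, and comparing with the lower bound $|S_n\cap\Z^d|\gtrsim 2^n\binom{d}{n}$. That lower bound comes from Corollary~\ref{cor:lat:compa small}, since $\alpha\le 1/818\le1/2$ and the exponential correction $\exp(\frac{n\alpha}{2}+\dots)$ is bounded below. This is essentially the content one expects from \cite[Lemma~2.3]{Ni2}, and we will carry it out in the present notation.

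\emph{Parametrization.} Fix $x\in S_n\cap\Z^d$. Let $j$ be the number of coordinates with $|x_i|=1$ and $k$ the number with $|x_i|\ge2$, and put $m\coloneqq n-j$, the total $\ell^1$ mass carried by the large coordinates. Each large coordinate is at least $2$ in absolute value, so $2k\le m$. The number of such $x$ equals
\[
N(j,k)\coloneqq\binom{d}{j}\binom{d-j}{k}\,2^{j+k}\binom{m-k-1}{k-1},
\]
where the last factor counts compositions of $m$ into $k$ parts each at least $2$. In the set under consideration we have $j\le n/2$, hence $m\ge n/2$, and therefore the deficiency $t\coloneqq n-j-k\ge m/2\ge n/4$. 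The key point is that every such $x$ has at most $n-t$ nonzero coordinates, and this forces a loss of a factor of roughly $\alpha^{t}$ relative to $2^n\binom dn$.

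\emph{Ratio estimate.} We bound
\[
\binom dj\binom{d-j}{k}\le \frac{d^{j+k}}{j!\,k!},\qquad \binom dn\ge\frac{(d-n)^n}{n!}\ge \frac{d^n e^{-2\alpha n}}{n!},\qquad \binom{m-k-1}{k-1}\le 2^{m}.
\]
This yields
\[
\frac{N(j,k)}{2^n\binom dn}\lesssim e^{2\alpha n}\,2^{-t}\,\frac{n!}{j!\,k!\,t!}\,t!\,d^{-t}\,2^{m}\le e^{2\alpha n}\,3^n\,2^{m}\,2^{-t}\,\alpha^{t}.
\]
Here we used that the multinomial coefficient $\frac{n!}{j!\,k!\,t!}$ is at most $3^n$ and that $t!\,d^{-t}\le (n/d)^t=\alpha^t$. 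With $t\ge n/4$ and $\alpha\le1/818$, the product of the factor $\alpha^{t}$ and the exponential factors should be at most $C\,2^{-n/2}\cdot 2^{-n/8}$, say.

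\emph{Summation.} Summing over the at most $n^2$ admissible pairs $(j,k)$ then costs a factor of at most $n^2$. This is absorbed by the extra geometric decay $2^{-n/8}$, which gives the claimed bound $\lesssim 2^{-n/2}|S_n\cap\Z^d|$.

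\emph{Main obstacle.} The delicate step is the bookkeeping of the constants in the ratio estimate. The crude bounds above, namely $3^n$ for the multinomial coefficient and $2^m$ for the compositions, are competing against only $\alpha^{n/4}$. A careless version gives roughly $6^n\alpha^{n/4}$, which is not small enough when $\alpha=1/818$. To fix this, we will first try to use the decay in $t$ more efficiently. The composition count is really $\binom{m-k-1}{k-1}\le 2^{m-k}$ with $m-k\le 2t$, and the multinomial coefficient should be split as $\binom{n}{t}\binom{n-t}{j}$. Both exponential losses can then be charged against $t$ rather than against $n$, so that the ratio becomes $(c_0\alpha)^t\cdot 2^{j}\binom{n-t}{j}2^{-(n-t)}$. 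The last factor is at most $1$. What remains is $(c_0\alpha)^{t}$ with $t\ge n/4$ and a modest absolute constant $c_0$, and it is at this point that we will check that the threshold $\alpha\le 1/818$ makes $(c_0\alpha)^{n/4}\lesssim 2^{-n/2}\,n^{-2}$.
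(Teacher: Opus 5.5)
Your argument is correct, but it is not the paper's route. The paper does no counting here. It uses Corollary~\ref{cor:lat:compa small}, i.e.\ $|B_n\cap\Z^d|\approx|S_n\cap\Z^d|$ for $n\le d/2$, which rests on the saddle-point analysis of Theorem~\ref{thm:lat:extract}. This lets it pass to the larger set $\{x\in B_n\cap\Z^d : |\{i: x_i=\pm1\}|\le n/2\}$, and it then quotes \cite[Lemma~2.3]{Ni2} with $q=1$ and $k=n/2$. The threshold $1/818$ is inherited from that lemma, which is stated for general $\ell^q$ balls. You instead count directly on $S_n$ by the pair $(j,k)$ and compare with $2^n\binom dn$. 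That lower bound is trivial, since $S_n\cap\{-1,0,1\}^d$ already has $2^n\binom dn$ points. So your proof needs neither Corollary~\ref{cor:lat:compa small} nor Theorem~\ref{thm:lat:extract}: it is self-contained and elementary, at the price of doing the bookkeeping that the paper outsources.

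Two remarks on that bookkeeping. First, $m-k=t$ exactly, so the composition count is $\binom{t-1}{k-1}\le 2^{t-1}$. Also, the factor $2^j$ in your last displayed ratio should not be there, because the signs contribute exactly $2^{j+k-n}=2^{-t}$. With that factor present, the claim $2^j\binom{n-t}{j}2^{-(n-t)}\le 1$ would be false, e.g.\ for $j=n/2$, $k=n/4$. Second, the obstacle you describe is not real: your first chain already suffices if you charge everything to $t$ before invoking $t\ge n/4$. Using $n\le 4t$, $e^{2\alpha n}\le e^{8\alpha t}$ and $2^m2^{-t}=2^k\le 2^t$, it gives
\[
\frac{N(j,k)}{2^n\binom dn}\le \big(162\,e^{8\alpha}\alpha\big)^t .
\]
For $\alpha\le 1/818$ one has $162e^{8\alpha}\alpha<0.21$, so each term is at most $0.21^{n/4}=2^{-n/2}\,0.84^{n/4}$. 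The factor $n^2$ from summing over $(j,k)$ is then absorbed.
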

\begin{proof}
By Corollary~\ref{cor:lat:compa small} it suffices to prove 
 \[
 |\{x \in B_n \cap \ZZ^d : |\{i \in [d]: x_i= \pm 1\}| \leq n/2\}| \lesssim 2^{-n/2}|B_n \cap \Z^d|.
 \]
 We then apply \cite[Lemma~2.3]{Ni2} with $q=1,$ $\alpha \le (818)^{-1}$, and $k=n/2$.
 \end{proof}

With Corollaries~\ref{cor:3.2}~and~\ref{cor:4.2} at hand, we prove Theorem~\ref{T2} by following \cite[Section~4]{NiWr}. We reduce the task to bounding the multiparameter combinatorial maximal function from \cite[Section~2]{NiWr}, which we now recall. 

Fix $K \in \N$ and a $K$-tuple $\vec j = (j_1, \dots, j_K)$ of nonnegative integers. Let $D_{\vec j}$ be the set of lattice points in $\{-K,\ldots,K\}^d$ such that exactly $j_1$ coordinates are equal to $\pm 1$, exactly $j_2$ are equal to $\pm 2$, and so on. Formally, we set
\begin{equation*}
D_{\vec j} \coloneqq \bigcap_{k \in [K]} \big\{ x \in \{-K,\dots,K\}^d :  |\{i \in [d]: |x_i|=k\}|=j_k \big\}.
\end{equation*}
Consider an averaging operator given by
\[
\mathcal{D}_{\vec j} f(x) \coloneqq \frac{1}{|D_{\vec j}|} \sum_{y \in D_{\vec j}} f(x-y), \qquad x \in \ZZ^d,
\]
and the corresponding multiplier symbol
\begin{equation*}
  \beta_{\vec j}(\xi) \coloneqq \frac{1}{|D_{\vec j}|} \sum_{x \in D_{\vec j}} e(x \cdot \xi),\qquad \xi\in\T^d.
\end{equation*}
Let $\vec J_{\leq d/2K} \coloneqq \{0, 1, \dots, \lfloor \frac{d}{2K} \rfloor \}^K$. In view of \cite[Theorem~1.5]{NiWr}, we have
\begin{equation}
\label{eq: combi max}
 \Big\| \sup_{\vec j \in \vec J_{\leq d/2K}} |\mathcal{D}_{\vec j}f| \Big\|_{\ell^2(\Z^d)} \lesssim_K \| f \|_{\ell^2(\Z^d)}, \qquad f\in\ell^2(\Z^d).
\end{equation}

Now, for $\xi = (\xi_1, \dots, \xi_d) \in \T^d$ let $\xi + 1/2 \coloneqq (\xi_1 + 1/2, \dots, \xi_d + 1/2) \in \T^d$ be its unique antipodal point. We partition $\T^d$ into two subsets
\[
\T^d_0 \coloneqq \{\xi \in \T^d: \|\xi \| \leq \| \xi + 1/2 \| \}, \quad
\T^d_1 \coloneqq \{\xi \in \T^d: \|\xi \| > \| \xi + 1/2 \| \},
\]
where $\| \xi \| \coloneqq \big( \sum_{i \in [d]} \sin^2(\pi \xi_i) \big)^{1/2}$. 
We shall prove the following result.
\begin{proposition} \label{prop Sn bounded by D}
    Let $K, d \in \N$ and $\varepsilon \in (0,1)$. If $d \geq (818+2K)^{K+1}$, then
   \[
   \Big\| \sup_{n \in [ \lfloor d^{\frac{K-\varepsilon}{K+1}} \rfloor ] } |\mathcal{S}_{n}f| \Big\|_{\ell^2(\Z^d)} \lesssim_{K,\varepsilon} \sum_{\omega \in \{0,1\}} \Big\| \sup_{\vec j \in \vec J_{\leq d/2K}} |\mathcal{D}_{\vec j}f_\omega| \Big\|_{\ell^2(\Z^d)} + \|f\|_{\ell^2(\Z^d)}
   \]
   for all $f \in \ell^2(\Z^d)$, where 
   $ f_\omega \coloneqq \mathcal{F}^{-1}( \ind{ \T^d_\omega} \widehat f) $ for $\omega \in \{0,1\}$.
\end{proposition}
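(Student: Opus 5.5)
We first decompose the spherical average according to the profile of its points and compare the pieces with the averages $\mathcal D_{\vec j}$, as in \cite[Section~4]{NiWr}. Fix $n \le d^{\frac{K-\varepsilon}{K+1}}$ and split $S_n\cap\Z^d$ into the good set $G_n$ and the bad set $B'_n$. The good set $G_n$ consists of points with all coordinates in $\{-K,\dots,K\}$ and at least $n/2$ coordinates equal to $\pm1$. The bad set is the rest. We treat the bad part first. If some $|x_i|>K$, then $\sum_i |x_i|\ind{[-K,K]}(x_i)\le n-K-1$. We choose $K$ (or first enlarge $a_{K,\varepsilon}$) so that Corollary~\ref{cor:3.2} applies, and handle the remaining excess by combining Corollary~\ref{cor:3.2} with Corollary~\ref{cor:3.9}. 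This bounds the proportion of such points by $O_{K,\varepsilon}(1/d)$. The points with few $\pm1$ coordinates have proportion $\lesssim 2^{-n/2}$ by Corollary~\ref{cor:4.2}, which needs $\alpha\le 1/818$ and is guaranteed by $d\ge(818+2K)^{K+1}$. The bad averages are then dominated pointwise by $|f|$ convolved with nonnegative kernels of mass $\lesssim 1/d+2^{-n/2}$. Summing over $n\le d$ gives an $\ell^2$ bound $\lesssim \sum_n (1/d+2^{-n/2})\|f\|\lesssim\|f\|$, using $\sup\le\sum$. This produces the term $\|f\|_{\ell^2}$.

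Next we handle the good part. It is a disjoint union of the sets $D_{\vec j}$ over tuples $\vec j$ with $\sum_k k j_k = n$ and $j_1\ge n/2$. All such $\vec j$ lie in $\vec J_{\le d/2K}$ because $n\le d/2K$. Hence the good part of $\mathcal S_n$ equals $\sum_{\vec j}\lambda_{n,\vec j}\mathcal D_{\vec j}$ with $\lambda_{n,\vec j}=|D_{\vec j}|/|S_n\cap\Z^d|$ and $\sum\lambda\le1$. Writing $f=f_0+f_1$, the triangle inequality then gives
\[
\sup_n|\mathcal S^{good}_n f_\omega|\le \sup_{\vec j\in\vec J_{\le d/2K}}|\mathcal D_{\vec j} f_\omega|.
\]
This is the desired comparison with the combinatorial maximal function.

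The main subtlety lies in why the split into $f_0$ and $f_1$ is needed. A pointwise convex-combination bound already works without splitting, but the cutoff $f_\omega$ does not commute with the bad part. The clean approach is to write $f=f_0+f_1$ from the start: bound the bad parts of $f_\omega$ by $\|f_\omega\|\le\|f\|$, and bound the good parts of $f_\omega$ by the $\mathcal D_{\vec j}$ maximal function. Then $\sup_n|\mathcal S_n f|\le\sum_\omega\sup_n|\mathcal S_n f_\omega|$ and everything closes. The step I expect to be delicate is therefore not this comparison but ensuring that the bad-set proportion is summable in $n$. The $1/d$ bound from Corollary~\ref{cor:3.2} over at most $d$ radii gives $O(1)$ and suffices. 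For very small $n$ the $2^{-n/2}$ bound is only $O(1)$, but finitely many such $n$ contribute $O(\|f\|)$ anyway. I will also check that the hypotheses of Corollaries~\ref{cor:3.2} and~\ref{cor:4.2} hold uniformly under $d\ge(818+2K)^{K+1}$.
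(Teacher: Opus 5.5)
Your argument fails at the bad-set estimate. You claim that the points of $S_n\cap\Z^d$ with some $|x_i|>K$ make up an $O_{K,\varepsilon}(1/d)$ proportion. Corollary~\ref{cor:3.2} does not give this: it only controls points whose total mass in coordinates outside $[-K,K]$ is at least $a_{K,\varepsilon}$. It says nothing about points with a bounded number of coordinates of size between $K+1$ and $a_{K,\varepsilon}$. Corollary~\ref{cor:3.9} only removes coordinates of size at least $6K$. Enlarging $a_{K,\varepsilon}$ makes Corollary~\ref{cor:3.2} weaker, not stronger, and $K$ is fixed by the statement, since it determines the maximal function on the right.

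The claim is in fact false. Take $K=1$. The points with one coordinate $\pm 2$ and $n-2$ coordinates $\pm 1$ number
\[
2^{n-1}d\binom{d-1}{n-2}=\frac{n(n-1)}{2(d-n+1)}\,2^n\binom{d}{n}\approx \frac{n^2}{d}\,|S_n\cap\Z^d|
\]
by Corollary~\ref{cor:lat:compa small}. This is about $d^{-\varepsilon}$ of $S_n$ at $n\approx d^{(1-\varepsilon)/2}$. In general the fraction with a coordinate $\pm(K+1)$ is of order $n^{K+1}/d^{K}$. Summed over $n\le d^{\frac{K-\varepsilon}{K+1}}$ this gives about $d^{(K-\varepsilon(K+2))/(K+1)}$, which is unbounded for small $\varepsilon$, so your $\sup\le\sum$ bound for the bad part collapses. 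Your scheme would work with the larger cube $\{-K',\dots,K'\}^d$, $K'\ge 2K+1$: there the exceptional fraction is $\lesssim_K d\alpha^{2K+2}\le d^{-1-2\varepsilon}$. But that proves a different inequality, with the $K'$-parameter combinatorial maximal function on the right, not the stated one.

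The missing idea is that these points must stay in the main term, and this is exactly why the split $f=f_0+f_1$ is genuinely needed, contrary to your remark. The paper writes $x=y(x)+z(x)$ with $y(x)_i=x_i\ind{[-K,K]}(x_i)$. It discards only $S_n\setminus S_n^*$, using Corollaries~\ref{cor:3.2} and~\ref{cor:4.2}, and so keeps the points whose part $z$ outside $[-K,K]$ has mass at most $a_{K,\varepsilon}$. The kept part carries the phases $e(z\cdot\xi)$, that is, translations by $z$, so it is not pointwise dominated by $\sup_{\vec j}|\mathcal D_{\vec j}f|$.

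On the Fourier side, for $\xi\in\T^d_\omega$ one replaces $e(z\cdot\xi)$ by $\sgn(\omega,z)$: by $1$ on $\T^d_0$ and by $(-1)^{z_1+\dots+z_d}$ on $\T^d_1$, where $|\beta_{\vec j}|$ is again large. The error is $O_{K,\varepsilon}(1/n)$ as in \eqref{eq:4.1}. This step uses the decay of $\beta_{\vec j}$ from \cite[Lemma~2.8]{NiWr}, and that is where the condition $j_1\ge n/2$ enters. The resulting $\phi_{n,\omega}$ combines the $\beta_{\vec j}$ with coefficients of total absolute value at most $1$, which gives \eqref{eq:4.4}. Since the error $1/n$ is not summable in $n$, the difference $s_n-\phi_{n,\omega}$ must be handled by a square function and Plancherel, using the square summability of $\frac1n+\frac1d+2^{-n/2}$, rather than by $\sup\le\sum$.
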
 

Since one has $M_{*,(0,d^{1-\varepsilon}]}f \le \mathcal S_{*,(0,d^{1-\varepsilon}]}f$ pointwise for nonnegative functions $f$, to prove Theorem~\ref{T2} it is enough to establish \eqref{eq: max spher small}. It is also not difficult to see that Proposition~\ref{prop Sn bounded by D}, together with \eqref{eq: combi max} and interpolation, gives \eqref{eq: max spher small} by taking $K$ large enough. 
It remains to prove Proposition~\ref{prop Sn bounded by D}.
\begin{proof}[Proof of Proposition~\ref{prop Sn bounded by D}]
We follow the proof of \cite[Proposition~4.3]{NiWr} and skip most details only commenting on necessary steps and changes. 

Fix $K, \varepsilon, d$ as above. The multiplier symbol of $\mathcal S_{n}$ is given by
\[
s_{n}(\xi) \coloneqq \frac{1}{|S_n\cap \Z^d|}\sum_{x\in S_n\cap \Z^d}e(x\cdot \xi),\qquad \xi \in \T^d.
\]  
For every $x \in \RR^d$ we decompose $x=y(x)+z(x)$ in such a way that 
\[
y(x)_i \coloneqq x_i \, \ind{[-K,K]}(x_i), \qquad i \in [d].
\]
Let $a_{K, \varepsilon} \in \NN$ be the integer from Corollary~\ref{cor:3.2}. We define the subset
\[
S_n^* \coloneqq \big \{ x \in S_n : \sum_{i \in [d]} |y(x)_i| \geq n - a_{K, \varepsilon} \text{ and }  |\{i \in [d] : |x_i|=1 \}| \geq n/2 \big \}
\]
and the corresponding multiplier symbol
\[
s^*_{n}(\xi) \coloneqq \frac{1}{|S_n\cap \Z^d|}\sum_{x\in S_n\cap \Z^d}e(x\cdot \xi) \, \ind{S_n^*}(x),\qquad \xi \in \T^d.
\]
By Corollaries~\ref{cor:3.2}~and~\ref{cor:4.2} if $ n \leq d^{\frac{K-\varepsilon}{K+1}}$, then 
\begin{equation}
\label{eq:rsnbound}
|s_{n}(\xi) - s^*_{n}(\xi)| \lesssim_{K,\varepsilon} \frac{1}{d} + 2^{-n/2}, \qquad \xi \in \T^d.
\end{equation}

The remaining term ${s}^*_{n}(\xi)$ may be treated as in \cite[Section~4]{NiWr}. Namely, denote $Z(S^*_n) \coloneqq \{z(x) \in \Z^d : x \in S^*_n\cap \Z^d\}$. For $l \in \{0, 1, \dots, a_{K, \varepsilon} \}$ let 
\[
Z_l(S^*_n) \coloneqq \{ z \in Z(S^*_n) : |\{i \in [d] : z_i \neq 0\}|=l \}.
\]
For $z \in Z(S^*_n)$ and a $K$-tuple $\vec j = (j_1, \dots, j_K)$ of nonnegative integers let
\[
Y_z(\vec j) \coloneqq \big \{ y \in D_{\vec j} : y_1 z_1 = \cdots = y_d z_d = 0 \text{ and } y+z \in S^*_n\cap \Z^d \big \}.
\]
Denoting by $\vec J_z$ the set of all admissible tuples, that is,
\[
\vec J_z \coloneqq \big \{ \vec j \in \{0,1,\dots,d\}^K : j_1 \geq n/2 \text{ and } n-a_{K, \varepsilon} \leq \sum_{k \in [K]} k j_k \leq n - |z|
\big\},
\]
where $|z| \coloneqq \sum_{i \in [d]} |z_i|$, we can rewrite $s^*_n(\xi)$ in the following way
\[
s^*_n(\xi) = \frac{1}{|S_n \cap \Z^d|}
\sum_{l=0}^{a_{K, \varepsilon}} \sum_{z \in Z_l(S^*_n)} e(z \cdot \xi)
\sum_{ \vec j \in \vec J_z} 
\sum_{y \in Y_{z}(\vec j)} e(y \cdot \xi).
\]
Note that $\sum_{i \in [d]} |z_i|^2 \le a_{K, \varepsilon}^2$ for $z \in Z(S^*_n)$.
Thus, for $\omega \in \{0,1\}$ we can get
\begin{align} 
\begin{split} \label{eq:4.1}
 |{s}^*_{n}(\xi)-\phi_{n,\omega}(\xi)| & \lesssim_{K,\varepsilon} \frac{1}{n}, \qquad \xi \in \T^d_\omega,
\end{split}
\end{align}
where, denoting $\sgn(0, z) \coloneqq 1$ and $\sgn(1, z) \coloneqq (-1)^{z_1+\dots+z_d}$, we define
\begin{align*}
\phi_{n,\omega}(\xi) & \coloneqq \frac{1}{|S_n \cap \Z^d|}
\sum_{l=0}^{a_{K, \varepsilon}} \sum_{z \in Z_l(S^*_n)} \sgn(\omega, z)
\sum_{ \vec j \in \vec J_z} 
\sum_{y \in Y_{z}(\vec j)} e(y \cdot \xi).
\end{align*}
The justification of \eqref{eq:4.1} is analogous to that in \cite[Section~4]{NiWr}. The product estimates for the multiplier symbol $\beta_{\vec j}$ in \cite[Lemma~2.8]{NiWr} are crucial here. 

Next, denoting $|\vec j| \coloneqq \sum_{k \in [K]} j_k$ and following \cite[Section~4]{NiWr}, we obtain 
\begin{align*}
\phi_{n,\omega}(\xi)
& = \frac{1}{|S_n \cap \Z^d|} \sum_{l=0}^{a_{K, \varepsilon}} 
\sum_{z \in Z_l(S^*_n)} \sgn(\omega, z)
\sum_{ \vec j \in \vec J_z}
\frac{\binom{d- |\vec j|}{l}}{\binom{d}{l}} |D_{\vec j}| \, \beta_{\vec j} (\xi).
\end{align*}
Note that $\phi_{n,0}(0)= {s}^*_{n}(0) \leq 1$. Thus, for $\omega \in \{0,1\}$ and $f \in \ell^2(\ZZ^d)$ we have 
\begin{equation}
\label{eq:4.4}
|\mathcal{F}^{-1} ( \phi_{n,\omega}\widehat{f} ) | 
\leq \sup_{\vec j \in \vec J_{\leq d/2K}} |\mathcal{D}_{\vec j} f|
\end{equation}
pointwise, using the fact that $d \geq (2K)^{K+1}$ and $n \leq d^{\frac{K-\varepsilon}{K+1}}$ imply $n \leq \frac{d}{2K}$.

Having established \eqref{eq:rsnbound}--\eqref{eq:4.4}, we complete the proof by repeating the final steps in the proof of \cite[Proposition~4.3]{NiWr}. Here the main ingredients are Plancherel's theorem, the square summability of $s_n(\xi) - \phi_{n,\omega}(\xi)$ in the range $n\le d/2$ for $\xi \in \T^d_\omega$, and the maximal estimate \eqref{eq: combi max}. 
\end{proof}

\section{Full range of dyadic scales -- proof of Theorem~\ref{T3}}
\label{sec: dyad}

The dimension-free bound 
\[
\| M_{*,\mathcal D\cap (0,d]} f \|_{\ell^p(\ZZ^d)} \leq C \| f \|_{\ell^p(\ZZ^d)},\qquad f\in \ell^p(\ZZ^d),
\]
for all $d \in \NN$ and $p \in [2,\infty)$ was recently established by the second author in \cite[Theorem~1.1]{Ni2}. Therefore, in view of Theorem \ref{T1}, it suffices to control the maximal function over $\mathcal D\cap (d,C_1d^{3/2})$ for some $C_1 \in (1,\infty)$. Moreover, since for each such $C_1$ the set $\mathcal D\cap (d,C_1 d]$ is finite and its size is uniformly bounded in $d$, it is enough to justify the following result.
\begin{theorem} \label{T3'}
	There exist universal constants $C_1,C_2 \in (1,\infty)$ such that
    \begin{equation*}
	\| M_{*,\mathcal D\cap (C_1d,C_1 d^{3/2})} f \|_{\ell^p(\ZZ^d)} \leq C_2 \| f \|_{\ell^p(\ZZ^d)},\qquad f\in \ell^p(\ZZ^d),
	\end{equation*}
    holds for all $d \in \NN$ and $p\in [2,\infty)$.
\end{theorem}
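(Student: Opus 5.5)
We follow the dyadic argument of \cite[Section~4]{KMPW}, which controls a dyadic maximal function by comparing each discrete average $M_N$ with a well-behaved semigroup-type (or dilated continuous) multiplier. Concretely, for $N\in\mathcal D\cap(C_1d,C_1d^{3/2})$ we would bound the multiplier symbol
\[
m_N(\xi)\coloneqq\frac{1}{|B_N\cap\Z^d|}\sum_{x\in B_N\cap\Z^d}e(x\cdot\xi)
\]
in two ways. First, near the origin we would show $|m_N(\xi)-1|\lesssim N^2\kappa(\xi)^2/d$ up to harmless constants, where $\kappa(\xi)^2=\sum_i\sin^2(\pi\xi_i)$. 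Second, away from the origin we would prove a decay estimate $|m_N(\xi)|\lesssim (N^2\kappa(\xi)^2/d)^{-1/2}$, or any power decay, after treating separately the antipodal region as in Section~\ref{sec: full small}. Once these two estimates hold uniformly in $d$, the standard square-function argument (Plancherel plus comparison with the Poisson or heat semigroup $P_{N^2/d}$, whose maximal function is dimension-free on $\ell^p$) gives the $\ell^2$ bound. For $p\in[2,\infty)$ we then interpolate with the trivial $\ell^\infty$ bound, exactly as in \cite{KMPW}.

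The only place where cross-polytope geometry enters is the computation of the second moments and the variance-type quantities entering these multiplier estimates. For this we need a statement in the spirit of Corollary~\ref{cor: iso}: for $x$ uniformly distributed in $B_N\cap\Z^d$, each coordinate satisfies $\mathbb E x_i^2\approx N^2/d^2$. We would also need the analogous lower bound for the probability that $x_i$ takes values in a fixed residue-type window, the ``isotropic'' property. We would derive this from Theorem~\ref{thm:lat:extract}. Counting points with prescribed $|x_1|=k$ gives the weight
\[
\frac{2\,|B_{N-k}^{d-1}\cap\Z^{d-1}|}{|B_N^d\cap\Z^d|},
\]
and by \eqref{eq: B_n large}, valid since $N\ge C_1 d\ge 2d$, the ratio of consecutive such counts is
\[
\frac{|B_{N-k}^{d-1}\cap\Z^{d-1}|}{|B_{N-k+1}^{d-1}\cap\Z^{d-1}|}=\exp\bigl(-d/N+O(d/N^2)\bigr)
\]
uniformly. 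So the marginal law of $|x_1|$ is comparable to a geometric distribution with mean $\approx N/d$. This yields $\mathbb E x_1^2\approx (N/d)^2$, and also the Fourier decay of the one-dimensional marginal: its characteristic function is comparable to $(1+(N/d)^2\sin^2(\pi\xi_1))^{-1}$. Coordinates are not independent, but conditioning on the $\ell^1$ norm (or using the symmetry $x_i\mapsto -x_i$ together with the product structure of $h(z)^d$ via the Cauchy integral \eqref{eq:Cauchyball} with $z$ replaced by twisted weights) lets us write $m_N(\xi)$ as a contour integral of $\prod_i h_{\xi_i}(z)$, where $h_{\xi_i}(z)=\sum_k z^{|k|}e(k\xi_i)$. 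We would run the saddle point argument of Section~\ref{sec: lat} at the same radius $r$.

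The main obstacle is this last step: obtaining the decay of $m_N$ uniformly in $d$ for $N$ between $C_1d$ and $C_1d^{3/2}$. Here neither the continuous comparison of Section~\ref{sec: full large} nor the small-scale combinatorics of Section~\ref{sec: full small} applies. We would first try to bound $|h_{\xi}(re^{i\theta})|/h(r)\le \exp(-c\,\min(1,r^2\sin^2(\pi\xi)/(1-r)^2))$ pointwise. Integrating over the contour with the Gaussian-width estimates from the proof of Theorem~\ref{thm:lat:extract}, and noting $1-r\approx d/N$ when $\alpha=N/d$ is large, this gives $|m_N(\xi)|\lesssim\exp(-c\sum_i\min(1,(N/d)^2\sin^2(\pi\xi_i)))$. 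That bound suffices for the \cite{KMPW} machinery.
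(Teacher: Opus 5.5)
\textbf{There is a genuine gap in the decay estimate.} Your architecture is the paper's: a local estimate from the second-moment bound of Corollary~\ref{cor: iso}, a decay estimate, then the square-function argument of \cite{KMPW}. Your derivation of $\mathbb{E}x_1^2\lesssim\alpha^2$ is also essentially the paper's proof of that corollary. One correction there: the ratio of consecutive counts is about $\exp(-d/(N-j))$, not uniformly $\exp(-d/N)$, so the range $N-j\lesssim d$ needs separate treatment, as in the paper.

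Your first paragraph uses the Euclidean scale $N^2/d$. For cross-polytopes the scale must be $\alpha^2=N^2/d^2$ throughout, and with $N^2\kappa(\xi)^2/d$ the decay claim is false. At $\xi=(c/\alpha,0,\dots,0)$ one has $m_N(\xi)\ge 1-2\pi^2\xi_1^2\,\mathbb{E}x_1^2\ge\frac12$ for small $c$, while your bound is $O(1/(c\sqrt d))$.

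The real problem is the bound you finally claim, $|m_N(\xi)|\lesssim\exp\bigl(-c\sum_i\min(1,\alpha^2\sin^2(\pi\xi_i))\bigr)$. It is false on the antipodal region. Since $h(-z)^d/(1-z)=(1-z)^{d-1}/(1+z)^d$, one gets exactly
\[
m_N\bigl(\tfrac12,\dots,\tfrac12\bigr)=(-1)^N\frac{|B^{d-1}_N\cap\ZZ^{d-1}|}{|B^d_N\cap\ZZ^d|}.
\]
By Theorem~\ref{thm:lat:extract} this is $\approx\alpha^{-1}\gtrsim d^{-1/2}$, not $O(e^{-cd})$. The source is your per-coordinate bound: $h_{1/2}(z)=h(-z)$, so $|h_{1/2}(re^{i\pi})|=h(r)$ and there is no gain at $\theta=\pi$.

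Treating this region ``as in Section~\ref{sec: full small}'' does not help. For spheres $s_n(\xi+\frac12)=(-1)^n s_n(\xi)$, because $|x_1|+\dots+|x_d|=n$ on $S_n$. For balls, $m_N(\eta+\frac12)$ is the alternating average $|B_N\cap\ZZ^d|^{-1}\sum_{k\le N}(-1)^k|S_k\cap\ZZ^d|\,s_k(\eta)$, which really is of size $\alpha^{-1}$ at $\eta=0$.

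\textbf{What is missing.} On this region you cannot get decay in $\xi$, only smallness in $\alpha$. For dyadic radii that suffices, because it is summable over $N\in\mathcal D$. This is exactly the $\alpha^{-1/7}$ term in the paper's global estimate $|m_n(\xi)|\lesssim(\alpha|\xi|)^{-1}+\alpha^{-1/7}$. The paper gets it not from a saddle point but by rerunning \cite[Proposition~4.2]{KMPW} with $q=1$, which only needs $10\le\alpha\le C_1d^{1/2}$.

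Your contour method might produce such a bound, for instance via $z\mapsto -z$ on $\T^d_1$, which trades $1/(1-z)$ for $1/(1+z)$. But it is not ``the saddle point argument of Section~\ref{sec: lat} at the same radius''. That section only treats $r\le\frac12$ and reaches $n\ge 2d$ through $D(d,n)=D(n,d)$, which has no analogue for twisted sums. You would need a new analysis at $r\approx 1-\alpha^{-1}$, where $1/(1-z)$ has size $\alpha$ and the Gaussian window has width $\approx(\alpha\sqrt d)^{-1}$.

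Moreover, $|B_N\cap\ZZ^d|\approx h(r)^d r^{-N}d^{-1/2}$. So a $\theta$-uniform bound on $\prod_i|h_{\xi_i}(re^{i\theta})|/h(r)$ loses a factor $\sqrt d$, and you must keep the Gaussian decay in $\theta$ together with the gain in $\xi$.
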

The new input from Theorem~\ref{thm:lat:extract} that will be used in the proof of Theorem~\ref{T3'} is contained in Corollary~\ref{cor: iso} below. This result provides an upper bound for a natural discrete analog of the isotropic constant for cross-polytopes (one can also obtain a lower bound of the same order). In fact, Corollary~\ref{cor: iso} can be generalized to a broad class of convex bodies with many symmetries but the argument is longer and less direct. This will be included in a forthcoming paper by the second author \cite{Ni3}.
\begin{corollary}
\label{cor: iso}
There exists a constant $C \in \RR_+$ such that if $n> Cd$, then
\[
\frac{1}{|B_n\cap \ZZ^d|}\sum_{x\in B_n\cap \ZZ^d}x_1^2 \lesssim \alpha^2.
\]
\end{corollary}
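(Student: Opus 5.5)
We plan to reduce the second moment to lattice point counts of cross-polytopes of slightly smaller radius and dimension, and then compare those counts using Theorem~\ref{thm:lat:extract}. Here $\alpha=n/d$, so the claim is that the average of $x_1^2$ is $\lesssim (n/d)^2$. This is the continuous isotropic scale, since for $\Vol(B_n)$ the average of $x_1^2$ is $\approx n^2/d^2$.

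Fix the first coordinate $x_1=k$. The remaining coordinates then range over $B^{d-1}_{n-|k|}\cap\ZZ^{d-1}$, so
\[
\sum_{x\in B_n\cap\ZZ^d}x_1^2=\sum_{|k|\le n}k^2\,|B^{d-1}_{n-|k|}\cap\ZZ^{d-1}|.
\]
It therefore suffices to show
\[
\sum_{k\ge 0}k^2\,\frac{D(d-1,n-k)}{D(d,n)}\lesssim \alpha^2 .
\]

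For $n>Cd$ we use \eqref{eq: B_n large}, applied to both $(d,n)$ and $(d-1,n-k)$, whenever $n-k\ge 2(d-1)$. The main term $(2e\alpha)^d d^{-1/2}$ is comparable to $\Vol(B_n^d)=(2n)^d/d!$, uniformly up to the factor $\exp(d\,b(\alpha^{-1}))$. Since $b(\alpha^{-1})=O(\alpha^{-2})$, this factor is $\exp(O(d/\alpha^2))=\exp(O(d^3/n^2))$, which need not be bounded when $n\ll d^{3/2}$. So we should not compare to volumes directly. Instead we compare the exponential corrections for the two parameter pairs. Set $\alpha'=(n-k)/(d-1)$. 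Then
\[
(d-1)\,b(1/\alpha')-d\,b(1/\alpha)=O\Big(\frac{d}{\alpha^2}\cdot\frac{k}{n}+\frac{1}{\alpha^2}\Big)=O\Big(\frac{k\,d^3}{n^3}+1\Big),
\]
where we use that $b$ is holomorphic with $b(z)=O(z^2)$, so that its derivative is $O(z)$. With $n>Cd$ this is $O(k/(Cn)\cdot d^2/n^2\cdot d)$, hmm — more simply it is $\le k\,d/n\cdot (d/n)^2\le k/(C^2\alpha)$, which is small compared with the main decay rate computed next.

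The main-term ratio is
\[
\frac{(2e(n-k)/(d-1))^{d-1}(d-1)^{-1/2}}{(2en/d)^d d^{-1/2}}\approx \frac{d}{n}\Big(1-\frac kn\Big)^{d-1}\lesssim \frac1\alpha e^{-k/\alpha}.
\]
(This matches the exact volume ratio $\frac{d}{2n}(1-k/n)^{d-1}$.) Combining this with the correction, the terms obey $\lesssim \alpha^{-1}e^{-k/(2\alpha)}$ once $C$ is large. Hence
\[
\sum_k k^2\alpha^{-1}e^{-k/(2\alpha)}\approx\alpha^{-1}\int_0^\infty t^2e^{-t/(2\alpha)}\,{\rm d}t\approx\alpha^2 .
\]

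It remains to handle the large $k$ for which $n-k<2(d-1)$. For these we use monotonicity, $D(d-1,n-k)\le D(d-1,2d)$, and bound their total contribution by $n^3 D(d-1,2d)/D(d,n)$. The same comparison, applied at $k_0=n-2d$, gives a factor $e^{-c n/\alpha}$ because $k_0\ge n/2$. This is negligible against $n^3$, since $n/\alpha=n^2/d\ge Cn$.

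We expect the main obstacle to be keeping the uniform implicit constants of \eqref{eq: B_n large} under control while the dimension changes from $d$ to $d-1$, and checking that the correction difference stays dominated by $k/\alpha$. A fallback is to avoid the corrections altogether: use the exact identity $D(d,n)-D(d,n-1)=D(d-1,n)+D(d-1,n-1)$ and bound the ratio $D(d-1,n-k)/D(d,n)$ by iterating $D(d,m-1)/D(d,m)\le 1-c/\alpha$, which follows from log-concavity-type estimates derived from \eqref{eq: Delsum}.
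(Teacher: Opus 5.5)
Your strategy is the same as the paper's. You condition on $x_1=k$ and compare $|B^{d-1}_{n-k}\cap\ZZ^{d-1}|$ with $|B_n\cap\ZZ^d|$ via \eqref{eq: B_n large}. You extract decay $\alpha^{-1}e^{-k/(2\alpha)}$, and you treat the remaining $k$ by monotonicity, ending with a bound of the form $n^3\alpha^{-1}e^{-cd}$. However, two steps fail as written.

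\textbf{The correction estimate only holds for $k\le n/2$.} Your bound $(d-1)b(1/\alpha')-d\,b(1/\alpha)=O(kd^3/n^3+1)$ silently uses $\alpha'\approx\alpha$ and $n-k\approx n$. For $n-k$ of order $d$ it is false. At $n-k=2(d-1)$ one has $\alpha'=2$, and the correction is about $(d-1)b(1/2)\approx d/50$, whereas your bound gives only $O(d/\alpha^2+1)$. So ``correction $\le k/(C^2\alpha)$'' is unavailable there. With only $e^{-k/\alpha}\approx e^{-d}$ on the main term, you would need quantitative information on $b$ over $[0,1/2]$ that you have not supplied. Your tail step (``the same comparison, applied at $k_0=n-2d$'') lies exactly in this regime, since there $\alpha'\approx 2$.

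\textbf{The tail arithmetic is off.} Note that $n/\alpha=d$, not $n^2/d$. So the factor you get is $e^{-cd}$, which does \emph{not} beat $n^3$, because $n$ may be arbitrarily large compared with $d$. What actually saves the tail is the prefactor $\alpha^{-1}$ together with the target $\alpha^2$:
\[
n^3\alpha^{-1}e^{-cd}=\alpha^2d^3e^{-cd}\lesssim\alpha^2 .
\]
This is precisely the paper's final computation.

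Both issues are easy to repair.
\begin{itemize}
\item \emph{Fix within your framework.} Use your comparison only for $k\le n/2$, where it is correct. For $k>n/2$, use monotonicity: $D(d-1,n-k)\le D(d-1,\lceil n/2\rceil)$. This is the $k=\lfloor n/2\rfloor$ case, so its ratio to $D(d,n)$ is $\lesssim \alpha^{-1}e^{-d/4}$. The total contribution of $k>n/2$ is then $\lesssim \alpha^2 d^3e^{-d/4}$.
\item \emph{The paper's fix.} The paper bounds the correction with the Lipschitz constant $L$ of $b$ on $[-1/2,1/2]$:
\[
|db(\alpha_*^{-1})-db(\alpha^{-1})|\le \frac{Ld^2j}{n(n-j)}+L\le \frac{j}{2\alpha}+L \quad\text{whenever } n-j\ge 2Ld.
\]
For $n-j<2Ld$, it uses monotonicity and compares $|B^{d-1}_{2Ld}\cap\ZZ^{d-1}|$ directly with $|B_n\cap\ZZ^d|$. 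This gives a ratio $\lesssim \frac{d}{n}e^{-d/2}$ once $n\ge 20Ld$.
\end{itemize}
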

\begin{proof}
We may assume that $d > 1$, since the case $d=1$ is obvious. We have
\begin{align*}
    \frac{1}{|B_n\cap \ZZ^d|}\sum_{x\in B_n\cap \ZZ^d}x_1^2\leq \alpha^2 +  \sum_{j = \lfloor \alpha \rfloor}^n j^2 \frac{|A_j|}{|B_n\cap \ZZ^d|},
\end{align*}
where $A_j \coloneqq \{x\in B_n\cap \ZZ^d : |x_1|=j\}$.
Denoting by $B^*_{R}$ the closed $\ell^1$~ball in $\RR^{d-1}$ of radius $R$ (with $B^*_0$ being the singleton $\{0\}$ in $\RR^{d-1}$), we see that
\begin{equation}
\label{eq: Aj Bn}
\frac{|A_j|}{|B_n\cap \ZZ^d|}=2\frac{|B^*_{n-j}\cap \ZZ^{d-1}|}{|B_n\cap \ZZ^d|}.
\end{equation}  
By \eqref{eq: B_n large} from Theorem~\ref{thm:lat:extract} if $n \ge 2 d$ and $n-j\ge 2 (d-1)$, then
\begin{equation}
\label{eq: Vol com}
\begin{split}
 |B_n\cap \ZZ^d| &\approx (2e \alpha)^d \frac{1}{\sqrt{d}
}\exp (db(\alpha^{-1})),\\
|B^*_{n-j}\cap \ZZ^{d-1}|&\approx (2e \alpha_*)^{d-1}  \frac{1}{\sqrt{d}
}\exp (db(\alpha_*^{-1})),
\end{split}
\end{equation}
where $\alpha_* \coloneqq \frac{n-j}{d-1}$.
By Theorem~\ref{thm:lat:extract} and the mean value theorem we have \begin{align*}
|db(\alpha_*^{-1})-db(\alpha^{-1})| 
\leq L d |\alpha^{-1}-\alpha_*^{-1}|= Ld \frac{|dj-n|}{n(n-j)}
\le \frac{L d^2 j}{n(n-j)}+ L
\end{align*}
with $L \coloneqq \lfloor L(b) \rfloor + 1$, where $L(b)$ is the Lipschitz constant of $b$ on $[-\frac12,\frac12]$.
We consider the range $n \ge 20Ld$. If $n-j\ge  2Ld$, then $\frac{Ld^2 j}{n(n-j)} \leq \frac{j}{2\alpha}$. Thus, 
\begin{align*}
    \frac{|A_j|}{|B_n\cap \ZZ^d|} 
    &\lesssim\alpha^{-1} (1-j/n)^{d-1} \exp(db(\alpha_*^{-1})-b(\alpha^{-1}))\\
    &\lesssim \alpha^{-1}\exp(-j/\alpha)\exp (j/(2\alpha))\le \alpha^{-1}\exp(-j/(2\alpha))
\end{align*}
by \eqref{eq: Aj Bn} and \eqref{eq: Vol com}. Noting that $\lfloor \alpha \rfloor \leq n - 2Ld$ by $n \ge 20Ld$, we see that
\begin{equation*}
\begin{split}
 \sum_{ j = \lfloor \alpha \rfloor }^{n - 2Ld} j^2 \frac{|A_j|}{|B_n\cap \ZZ^d|} \lesssim \sum_{j \in \NN} j^2 \alpha^{-1} \exp(-j/(2\alpha))
 \lesssim \sum_{j \in \NN} \alpha \exp(-j/(4\alpha))\lesssim \alpha^2.
 \end{split}
\end{equation*}
On the other hand, if $n-j\le 2Ld$, then \eqref{eq: Aj Bn} and \eqref{eq: Vol com} imply
\begin{align*}
    \frac{|A_j|}{|B_n\cap \ZZ^d|} &\lesssim \frac{|B_{2Ld}^* \cap \ZZ^{d-1}|}{|B_n\cap \ZZ^d|}\lesssim (2Ld/n)^{d}  \exp(d/2) \lesssim \frac{d}{n}\exp(-d/2),
\end{align*}
where the last inequality holds by $(2Ld/n)^{d-1} \leq 10^{-d/2} \leq e^{-d}$. Thus,
\begin{equation*}
 \sum_{j = n-2Ld}^{n} j^2 \frac{|A_j|}{|B_n\cap \ZZ^d|}\lesssim \frac{d}{n}\exp(-d/2) \sum_{j \in [n]} j^2 \lesssim \alpha^2 d^3 \exp(-d/2) \lesssim \alpha^2
\end{equation*}
and the proof is complete.
\end{proof}

We are now in a position to prove Theorem~\ref{T3'}.

\begin{proof}[Proof of  Theorem~\ref{T3'}]
We briefly sketch how to adapt the proof of \cite[Theorem~3]{KMPW}. Consider the multiplier symbol corresponding to $M_n$, that is, 
\[
m_n(\xi) \coloneqq \frac{1}{|B_n\cap \ZZ^d|}\sum_{x \in B_n\cap \ZZ^d}e( x\cdot \xi), \qquad \xi\in \RR^d.
\]
Let $C_1 \coloneqq 10+ C$ with $C$ from Corollary~\ref{cor: iso}. Now, the main ingredients are \cite[Proposition~4.1]{KMPW} and \cite[Proposition~4.2]{KMPW} which in our cases become
\begin{equation}
\label{eq: mul loc}
|m_n(\xi)-1|\lesssim (\alpha|\xi|)^2
\end{equation}
and
\begin{equation}
\label{eq: mul glob}
|m_n(\xi)|\lesssim (\alpha |\xi|)^{-1}+\alpha^{-1/7}.
\end{equation}

The proof of \eqref{eq: mul glob} follows the lines of the proof of \cite[Proposition~4.2]{KMPW}. The repetition of the argument is possible once we note that we can take any $q\in [1,\infty)$ and only need the same lower bound $\alpha=\kappa_1(d,n)\ge 10$ and the weaker upper bound $\alpha=\kappa_1(d,n) \le C_1 d^{1/2}$, which hold if $n \in (C_1d,C_1d^{3/2})$.

The proof of \eqref{eq: mul loc} is based on Corollary~\ref{cor: iso}. Proceeding as in the proof of \cite[Proposition~4.1]{KMPW} and using symmetry and the corollary, we obtain
\begin{align*}
|m_n(\xi)-1|&\le \frac{2}{|B_n \cap \ZZ^d|} \sum_{j \in [d]} \sin^2(\pi \xi_j) \sum_{x\in B_n \cap \ZZ^d} \frac{x_1^2 + \dots + x_d^2}{d} \\
&=\frac{2}{|B_n \cap \ZZ^d|} \sum_{j \in [d]} \sin^2(\pi \xi_j) \sum_{x\in B_n \cap \ZZ^d} x_1^2 \lesssim (\alpha|\xi|)^2.
\end{align*}

Finally, we apply \eqref{eq: mul loc} and \eqref{eq: mul glob} as in the proof of \cite[Theorem~3]{KMPW}.
\end{proof}

\renewcommand{\baselinestretch}{1.08}

\end{document}